\documentclass[12pt,a4paper]{amsart}
\usepackage{amsmath,amssymb,amscd,amsthm}
\usepackage{graphics}
\usepackage{enumerate}
\input xy
\xyoption{all}

\usepackage{comment}
\usepackage{tikz}
\usepackage{setspace}

\usetikzlibrary{arrows}

\usepackage[active]{srcltx}

\usepackage{enumerate}
 \usepackage[colorlinks,final,backref=page,hyperindex]{hyperref}

\usepackage{amsmath,amssymb,xspace,amsthm}
\textwidth=14.5cm \oddsidemargin=1cm
\evensidemargin=1cm

\newtheorem{theorem}{Theorem}[section]
\newtheorem{example}[theorem]{Example}%[section]
\newtheorem{remark}[theorem]{Remark}%[section]
\newtheorem{lemma}[theorem]{Lemma}%[section]
\newtheorem{proposition}[theorem]{Proposition}%[section]
\newtheorem{corollary}[theorem]{Corollary}%[section]
%[section]
\numberwithin{equation}{section}

\newcommand{\C}{\mathbb C}

\newcommand{\Z}{\mathbb{Z}}

\def\mg{\mathfrak{g}}

\def\mh{\mathfrak{h}}

\def\sl{\mathfrak{sl}}
\def\gl{\mathfrak{gl}}

\def\br{\mathbf{r}}

\def\b1{\mathbf{1}}
\def\l{\lambda}

\title[Minimal nilpotent finite $W$-algebra]{\bf
Irreducible cuspidal $\mathfrak{sl}_{n+1}$-modules
from finite-dimensional  modules over
 the minimal nilpotent finite $W$-algebra}
\author{Genqiang Liu and Mingjie Li}

\date{\today}

\begin{document}
\begin{abstract}A weight $\mathfrak{sl}_{n+1}$-module with finite-dimensional weight spaces is called cuspidal if every root vector acts injectively on it. We establish  a direct connection between such modules and the minimal nilpotent finite $W$-algebra $W(e)$ of $\mathfrak{sl}_{n+1}$. In particular, every weight space of an irreducible cuspidal module carries a natural structure of an irreducible finite-dimensional $W(e)$-module. Using an explicit embedding $\tau\colon W(e)\hookrightarrow U(\mathfrak{gl}_n)$, we prove that every finite-dimensional irreducible $W(e)$-module is an irreducible quotient of $V(\lambda)^\tau$ for some finite-dimensional irreducible $\mathfrak{gl}_n$-module $V(\lambda)$. We then determine the composition factors of $V(\lambda)^\tau$, characterize its irreducibility in terms of a dot-action orbit, and obtain explicit realizations of all irreducible cuspidal $\mathfrak{sl}_{n+1}$-modules, without using the twisted localization method and the coherent family introduced in \cite{M}.
\end{abstract}
\vspace{5mm}
\maketitle
\noindent{{\bf Keywords:} centralizer, irreducible module, finite $W$-algebra, cuspidal module}
\vspace{2mm}

\noindent{{\bf Math. Subj. Class.} 2020: 17B05, 17B10, 17B20, 17B35}

\section{Introduction}

Let $\mg$ be a complex semisimple Lie algebra and $e$ a nilpotent element of $\mg$.   The finite $W$-algebra $U(\mg,e)$ was introduced  by Premet \cite{Pr1}  as a quantization of the Slodowy slice through the adjoint orbit of $e$.
In fact, finite $W$-algebras appeared earlier in mathematical physics via BRST cohomology
under a slightly different guise, see for example \cite{BT,DV,Tj}.
 In the case when  $e$ is a principal nilpotent element, it was proven by Kostant \cite{K} that the corresponding finite $W$-algebra  is isomorphic to the center $Z(\mg)$ of $U(\mg)$. In \cite{Pr2}, Premet  determined generators and defining  relations of $U(\mg, e)$ when $e$ is a minimal nilpotent element. In \cite{BK}, Brundan and Kleshchev described any finite $W$-algebra associated to nilpotent orbits in general linear Lie algebras as a factor algebra of a shifted Yangian. This Yangian realization  by generators and relations makes it possible to study the representation theory of finite $W$-algebras in terms of purely combinatorial data. In \cite{BK2}, Brundan and Kleshchev used the relationship between shifted Yangians and the finite $W$-algebras  to study  highest weight representations over the finite $W$-algebras of type $A$.  There is a close connection between finite-dimensional irreducible representations of
$U(\mg, e)$ and primitive ideals of $U(\mg)$. In \cite{L},  Losev proved
that there is a bijection between the primitive ideals of $U(\mg)$ whose associated variety is the
closure of the adjoint orbit of $e$, and the orbits of the component group of the centralizer of $e$ on
the isomorphism classes of finite-dimensional irreducible $U(\mg, e)$-modules.
 This classification result  was refined in  \cite{LO}.
The recent works in \cite{PT,T} focused on the classification of one-dimensional $W$-modules. The category of finite-dimensional modules over the minimal
nilpotent finite $W$-algebras was studied in \cite{Pe}. There are several good surveys of the theory of finite $W$-algebras, see for example \cite{L2,W}.

Fernando and Futorny reduced the classification of irreducible weight modules
with finite-dimensional weight spaces to that of cuspidal modules for Levi
subalgebras \cite{F,Fu}. Among simple complex Lie algebras, cuspidal modules
occur only in types $A$ and $C$. Mathieu classified them using coherent
families and twisted localization \cite{M}. Their categories and block
structures were subsequently investigated in \cite{BKLM,GS1,GS2,GS3,MS2},
and cuspidal modules also provide relaxed highest weight modules for vertex
algebras \cite{ACD,FKR,KR,KR2}. These methods give a powerful classification,
but the structure carried by an individual weight space is less explicit.
The purpose of this paper is to make that structure concrete in type $A$ by
using a minimal nilpotent finite $W$-algebra. Finite $W$-algebras can be used to study Whittaker modules of corresponding Lie algebras, see the appendix given by Serge Skryabin in \cite{Pr1}. But they are rarely used to study weight modules, especially cuspidal modules. In this paper, we offer a direct connection between cuspidal $\sl_{n+1}$-modules and finite-dimensional modules over the minimal nilpotent finite $W$-algebra.

Let  $e=e_{10}+\dots+e_{n0}$  which is a minimal nilpotent element of $\sl_{n+1}$ and $\mh_n$ be the Cartan subalgebra of $\sl_{n+1}$.  In \cite{LL}, it was shown that the finite $W$-algebra $W(e)$ of $\sl_{n+1}$ associated with $e$ is isomorphic to the centralizer $W$ of the subalgebra $\mh_n\oplus (\sum_{i=1}^n\C e_{0i})$ in some localization $U_S$
of $U(\sl_{n+1})$, see Theorem \ref{iso-thw}. Moreover, Theorem 19 in \cite{LL} established an equivalence between a block with a generalized central character  of the cuspidal $\mathfrak{sl}_{n+1}$-module category
and a block of the finite-dimensional $W(e)$-module category.
By Theorem \ref{main-iso-th},   every weight space of an irreducible cuspidal module
is an irreducible finite-dimensional $W$-module, see Corollary \ref{weight-space}. So irreducible cuspidal $\mathfrak{sl}_{n+1}$-modules were reduced to finite-dimensional irreducible modules over  $W(e)$. Our main algebraic tool in this paper is an explicit embedding $
 \tau\colon W(e)\hookrightarrow U(\gl_n).
$.

The paper is organized as follows. In Section \ref{Sec.2}, we review in detail several facts about
 $W(e)$, including the centralizer realization of $W(e)$  from \cite{LL} and  the  injective homomorphism $\tau$  from $W$  to $U(\gl_n)$.
In Section \ref{Sec.3},  we study finite-dimensional irreducible modules over the  algebra $\sigma\tau(W)$, where $\sigma=\textup{exp}(-\textup{ad}X)$ is the inner automorphism of $U(\gl_n)$ and
$X=\sum_{k=1}^{n-1} e_{kn}$.  Using  highest-weight module theory, we show that if $M$ is a finite-dimensional irreducible  $\sigma\tau(W)$-module, then there is a finite-dimensional irreducible $\gl_n$-module $V(\l)$ such that $M$ is isomorphic to an irreducible quotient
of  $V(\l)$, see Theorem \ref{finite-1}. It should be mentioned  that \cite{BK2} provides a sufficient and necessary condition for  irreducible highest weight modules over finite $W$-algebras of type $A$ to be finite-dimensional, extending the  theory of Yangians due to Tarasov \cite{Ta} and Drinfeld \cite{Dr}. Our method is more specific and elementary.
 We also show that  restricted to $\sigma\tau(W)$,  the composition length of $V(\l)$ is at most $2$, and determine  all its irreducible subquotients, see Theorem \ref{irre-factor}.  We also show that these two statements
also hold for $W$, see Theorems \ref{finite-w-mod} and \ref{irre-factor2}.
In Section \ref{Sec.4}, using the results in Section 3, we give specific realizations  of
all irreducible cuspidal $\sl_{n+1}$-modules, see Theorem \ref{Realization-theorem} and Corollary \ref{Classification-theorem2}.

In this paper, we denote by $\Z$ and $\C$ the sets of integers and complex numbers, respectively. All vector spaces and algebras are over $\C$. For  $k\in \Z$,  define $\Z_{\geq k}=\{m\in \Z\mid m\geq k\}$.
For a Lie algebra $\mathfrak{g}$ we denote by $U(\mathfrak{g})$ its universal enveloping algebra.
 We write $\otimes$ for
$\otimes_{\mathbb{C}}$. For an $\alpha=(\alpha_1,\dots,\alpha_n)\in \C^n$, let $|\alpha|=\alpha_1+\dots+\alpha_n$. Let $\{e_1,\dots, e_n\}$ be the standard basis of $\mathbb{C}^n$. For an associative algebra $A$ (resp. a Lie algebra $\mg$), let $Z(A)$ (resp. $Z(\mg)$) be the center of $A$ (resp. $U(\mg)$). An algebra homomorphism  $\chi: Z(A)\rightarrow \C$ is called a central character. An  $A$-module $M$
 is said to have  generalized central character $\chi$ if $z-\chi(z)$ acts locally
 nilpotently on it for any $z\in Z(A)$.

\section{Preliminaries}\label{Sec.2}

In this section, we collect some necessary preliminaries, including
the algebra $W(e)$, a centralizer realization of $W(e)$,
cuspidal modules over $\sl_{n+1}$, and a {\it Miura type map} $\tau: W\rightarrow U(\gl_n)$.

\subsection{The definition of  $W(e)$}

For a fixed  $n\in \Z_{\geq 2}$, let $\gl_{n+1}$ be the general linear Lie algebra over $\C$,  i.e., $\gl_{n+1}$ consists of $(n+1)\times (n+1)$-complex matrices.
 Let $e_{ij}$   denote the $(n+1)\times (n+1)$-matrix  unit whose $(i,j)$-entry is $1$ and $0$ elsewhere, $0\leq i,j \leq n$. Then $\{e_{ij}\mid 0\leq i,j \leq n\}$ is a basis of $\gl_{n+1}$. Set $I_{n-1}=\sum_{k=1}^{n-1}e_{kk}$, $I_{n}=\sum_{k=1}^{n}e_{kk}$ and $I_{n+1}=\sum_{k=0}^{n}e_{kk}$.
Let $\mh_n=\oplus_{i=0}^{n-1}\C (e_{ii}-e_{i+1,i+1})$ which is a Cartan subalgebra of $\sl_{n+1}$.  In  $\mh_n$, let $h_i=e_{ii}-\frac{1}{n+1} I_{n+1}$  for any $i\in \{0, 1,\dots,n\}$.
 Define a $\Z$-gradation of $\sl_{n+1}$:
  $$\sl_{n+1}=\mg(-2)\oplus\mg(0)\oplus \mg(2),$$ where $\mg(2)=\oplus_{i=1}^n \C e_{i0}$, $\mg(-2)=\oplus_{i=1}^n \C e_{0i}$, and $\mg(0)\cong \gl_n$ is the subalgebra spanned by $e_{ij}, 1\leq i \neq j\leq n$ and $h_k, k=1,\dots,n$.

 Let $G=SL_{n+1}$ acting on $\sl_{n+1}$ by conjugation.
 Denote by  $\mathcal{N}$ the null cone which consists of all nilpotent elements in $\sl_{n+1}$. Consider the conjugate action of $G$ on $\mathcal{N}$. The set of nilpotent orbits in $\sl_{n+1}$ under this action is naturally a poset with partial order $\leq$ as follows: $O'\leq O$ if and only if $O'\subset \overline{O}$,
 where $\overline{O}$ is the closure of $O$.
 There exists a unique
minimal orbit $O_{\text{min}}$ in $\mathcal{N}$ of the smallest positive dimension.
 Every nilpotent matrix in $\sl_{n+1}$
is conjugate to a matrix in
Jordan canonical form parameterized
by a partition $p= (p_1,p_2,\dots)$ of $n+1$. In particular,
the associated Jordan block of $O_{\text{min}}$ is $(2,1^{n-1})$.

Throughout this paper, we fix a nilpotent element  $e=e_{10}+\dots+e_{n0}$ in
$\sl_{n+1}$.  Since $e$ is conjugate to $e_{10}$, $e$ is a  minimal nilpotent element.

\begin{lemma}The decomposition
$$\sl_{n+1}=\mg(-2)\oplus\mg(0)\oplus \mg(2)$$ is a good $\Z$-grading
for $e$,  see \cite{EK}. That is $e\in \mg(2)$,
$\text{ad}e: \mg(0)\rightarrow\mg(2)$ is surjective, and $\text{ad}e: \mg(-2)\rightarrow\mg(0)$ is injective.
\end{lemma}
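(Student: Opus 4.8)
The statement to establish has three parts, of which the first is immediate: since $e=\sum_{i=1}^n e_{i0}$ and $\mg(2)=\oplus_{i=1}^n\C e_{i0}$ by definition, we have $e\in\mg(2)$. The remaining two assertions are purely a matter of computing the adjoint action of $e$ on explicit bases, so the plan is to fix the basis $\{e_{ij}\mid 1\le i\ne j\le n\}\cup\{h_k\mid 1\le k\le n\}$ of $\mg(0)$, the basis $\{e_{0i}\mid 1\le i\le n\}$ of $\mg(-2)$, and the basis $\{e_{i0}\mid 1\le i\le n\}$ of $\mg(2)$, and then read off the matrices of $\mathrm{ad}\,e$ with respect to these.

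For the surjectivity of $\mathrm{ad}\,e\colon\mg(0)\to\mg(2)$, I would compute $[e,h_k]$ for $k=1,\dots,n$. Using $[e,I_{n+1}]=0$ (as $I_{n+1}$ is central in $\gl_{n+1}$) and $[e_{j0},e_{kk}]=-\delta_{kj}e_{k0}$ for $k\ge 1$, one gets $[e,h_k]=[e,e_{kk}]=-e_{k0}$. Hence every basis vector $e_{k0}$ of $\mg(2)$ lies in the image of $\mathrm{ad}\,e$, which forces $\mathrm{ad}\,e(\mg(0))=\mg(2)$; as a by-product $[e,e_{ij}]=-e_{i0}$ for $i\ne j$, which is not needed but confirms the picture.

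For the injectivity of $\mathrm{ad}\,e\colon\mg(-2)\to\mg(0)$, I would compute $[e,e_{0i}]=\sum_{k=1}^n[e_{k0},e_{0i}]=\sum_{k=1}^n e_{ki}-e_{00}$ for each $i=1,\dots,n$. Suppose $\sum_{i=1}^n c_i e_{0i}\in\ker(\mathrm{ad}\,e)$; then $\sum_i c_i\big(\sum_{k=1}^n e_{ki}-e_{00}\big)=0$, and comparing the $(1,i)$-matrix entry on both sides gives $c_i=0$ for every $i$. Thus $\mathrm{ad}\,e$ is injective on $\mg(-2)$, completing the verification that the grading is good for $e$. (One may also remark that this is the unique even good grading attached to the minimal orbit $(2,1^{n-1})$ in type $A$, so the lemma is also a special case of the Elashvili--Kac classification \cite{EK}.)

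I do not anticipate a genuine obstacle here: everything reduces to the matrix-unit multiplication rule $e_{ab}e_{cd}=\delta_{bc}e_{ad}$, and the only care needed is bookkeeping of which summands of $e=\sum_i e_{i0}$ survive in each bracket and keeping the $\C I_{n+1}$-part of the $h_k$ out of the way. The mild subtlety worth flagging is that $\mathrm{ad}\,e\colon\mg(0)\to\mg(2)$ is far from injective ($\dim\mg(0)=n^2>n=\dim\mg(2)$), so the good-grading conditions are genuinely the asymmetric pair "injective below, surjective above" rather than bijectivity, and the kernel of $\mathrm{ad}\,e$ on $\mg(0)$ has dimension $n^2-n$, consistent with $\dim\mg^e=n^2$.
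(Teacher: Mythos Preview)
Your proof is correct and follows essentially the same approach as the paper: both arguments reduce to direct computations with matrix units, showing surjectivity by exhibiting preimages of the $e_{k0}$ (you use $h_k$, the paper uses $e_{ik}$ for some $i\neq k$) and injectivity by expanding $[\,\cdot\,,e]$ on a generic element of $\mg(-2)$ and reading off coefficients. The only difference is cosmetic, and your additional dimension remarks are accurate but not needed for the lemma.
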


\begin{proof} For any $1\leq i\neq k\leq n$,  $[e_{ik}, e]=e_{i0}$. So $\text{ad}e: \mg(0)\rightarrow\mg(2)$ is surjective.
If $[\sum_{k=1}^n a_{k}e_{0k}, e]=\sum_{k=1}^n a_{k}e_{00}-\sum_{k,i=1}^na_{k}e_{ik}=0$, then $a_1=\dots=a_n=0$. Hence $\text{ad}e: \mg(-2)\rightarrow\mg(0)$ is injective.
\end{proof}

The map $\theta:\mg(-2) \rightarrow \C, X\mapsto \text{tr}(Xe)$ defines a one dimensional
$\mg(-2) $-module $\C_{\b1}:=\C v_{\b1}$. It is easy to see that $\theta(e_{0i})=1$,  for any $1\leq i \leq n$. Let
$\mathcal{I}_{\b1}$ be the left ideal of $U(\sl_{n+1}) $ generated by
$e_{01}-1,\dots,e_{0n}-1$.
Define  the induced $\sl_{n+1}$-module
(called generalized Gelfand-Graev module)

\begin{equation}\label{qmod}Q_{\b1} := U(\sl_{n+1}) \otimes_{U(\mg(-2) )}\C_{\b1}\cong U(\sl_{n+1})/\mathcal{I}_{\b1}.\end{equation}

The finite $W$-algebra $ W(e)$ is defined to be
the endomorphism algebra
$$W(e):= \text{End}_{\sl_{n+1}}(Q_{\b1} )^{\text{op}}. $$
This  definition of finite $W$ algebras is due to Premet, see \cite{Pr1}.
 Two different good $\Z$-gradings of $e$ are shown to lead to isomorphic finite
$W$-algebras, see \cite{BG}.
Let $\mathfrak{p}_n:=\mg(2)\oplus\mg(0)$ which is a maximal parabolic subalgebra of $\sl_{n+1}$. The PBW theorem implies that
$$U(\sl_{n+1}) =U(\mathfrak{p}_n)\oplus \mathcal{I}_{\b1} .$$
So as a vector space, $Q_{\b1}\cong U(\mathfrak{p}_n)$.
Applying Frobenius reciprocity, $W(e)$ is isomorphic to the
subalgebra
$$U(\mathfrak{p}_n)^{\mg(-2)}=\{y\in U(\mathfrak{p}_n)\mid
[e_{0i},y]\in \mathcal{I}_{\b1}, \text{for}\  1\leq i\leq n \},$$
since $\xi(v_\b1)$ corresponds to an element in $U(\mathfrak{p}_n)^{\mg(-2)}$,
for any $\xi\in \text{End}_{\sl_{n+1}}(Q_{\b1} )$.

%Let $\widetilde{\ch}_{\b1}$ be the  category
%whose objects are $\sl_{n+1}$-modules $M$ satisfying that  $e_{0i}-1$ acts locally nilpotently on $M$ for all $i \in \{1,\dots,n\}$. By the Skryabin's equivalence \cite{Pr1}, the functors $$M\mapsto \text{wh}_{\b1}(M),
%\ \ \ \  V\mapsto Q_{\b1}\otimes_{U(\mathfrak{sl}_{n+1},e)} V, $$ are inverse equivalence
% between $\widetilde{\ch}_{\b1}$ and the
%category of $U(\mathfrak{sl}_{n+1},e)$-modules,
%where $$\mathrm{wh}_{\mathbf{1}}(M)=\{v\in M \mid e_{0i} v=v, \ \forall\ i=1,\dots,n\}.$$

\subsection{Centralizer realization of $W(e)$}
In this subsection, we  recall an explicit realization of the finite $W$-algebra $W(e)$ given in \cite{LL}.

By Lemma 4.2 in \cite{M}, the set
   $$S:=\{e_{01}^{i_1}\dots e_{0n}^{i_n}\mid i_1,\dots, i_n\in\Z_{\geq0}\}$$ is an Ore subset of $U(\sl_{n+1})$.
Let $U_{S}$ be the localization $U(\sl_{n+1})$ with respect to $S$. Let $B$ be the subalgebra of $U_{S}$ generated by $h_{i}, e_{0k}, e_{0k}^{-1}, 1\leq k,i\leq n$.
 Let $W$ be the centralizer of $B$ of $U_S$.
 Define the following distinguished elements in $U_S$:
 \begin{equation}\label{x-w-def}
 \aligned x_{ij}&=e_{ij}e_{0i}e_{0j}^{-1}-h_i,\\
 \omega_k&=e_{k0}e_{0k}+\sum_{j=1}^n
 (e _{kj}-\frac{\delta_{jk}}{n+1}I_{n+1})(h_j-1)e_{0k}e_{0j}^{-1}, \endaligned\end{equation}for all $i,j, k=1,\dots,n$ with
 $i\neq j$.

 The next theorem tells us that $W$ is a tensor product factor of
$U_{S}$, see Lemma 7 and Theorem 8 in \cite{LL}.

 \begin{theorem}\label{main-iso-th}
 We have the following statements.
  \begin{enumerate}[$($a$)$]
 \item $U_{S}\cong B \otimes W $, $Z(U_{S})\cong Z(W)$.
 \item All the ordered monomials in $x_{ij}, \omega_k $, $i, j,k=1,\dots, n $ with
  $i\neq j$ form a basis of $W$
over $\C$.
   \end{enumerate}
\end{theorem}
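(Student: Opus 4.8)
The multiplication map $\mu\colon B\otimes_{\C}W\to U_S$, $b\otimes w\mapsto bw$, is an algebra homomorphism precisely because $W$ centralizes $B$ by definition, so the plan is to show that $\mu$ is an isomorphism; part $(b)$ and the statement $Z(U_S)\cong Z(W)$ will then fall out of the same argument. The first step is the routine (but important) verification that $x_{ij}$ and $\omega_k$ actually lie in $W$, i.e.\ commute with every $h_m$ and every $e_{0m}$ in $U_S$; this is a direct computation with the matrix-unit relations $[e_{ij},e_{kl}]=\delta_{jk}e_{il}-\delta_{li}e_{kj}$, and the twists by $e_{0i}e_{0j}^{-1}$ and the correction terms in \eqref{x-w-def} are exactly what is forced by requiring commutation with $B$. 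One also records $Z(B)=\C$: an element of $B$ commuting with every $h_m$ must be a polynomial in the $h_m$ alone, and commuting in addition with the units $e_{0m}$ it must be invariant under all shifts $h_m\mapsto h_m-1$, hence a scalar.

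Next I would show $\mu$ is surjective, i.e.\ $U_S=B\cdot W$; since $B$ and $W$ commute, $B\cdot W$ is a subalgebra, so it is enough to put each generator of $U_S$ in it. Now $e_{0k}^{\pm1},h_k\in B$; from $x_{ij}=e_{ij}e_{0i}e_{0j}^{-1}-h_i$ we get $e_{ij}=(x_{ij}+h_i)e_{0j}e_{0i}^{-1}$; and solving the second line of \eqref{x-w-def} for $e_{k0}e_{0k}$ and multiplying by $e_{0k}^{-1}$ expresses $e_{k0}$ through $\omega_k$, the already-treated $e_{kj}$ ($j\neq k$), the $h_j$ and the $e_{0j}^{\pm1}$ (using $e_{kk}-\tfrac1{n+1}I_{n+1}=h_k$). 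Hence all generators lie in $B\cdot W$ and $\mu$ is onto.

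For injectivity and for $(b)$ the plan is to pass to associated graded algebras under the Kazhdan filtration on $U_S$, in which a vector of $\mg(j)$ is assigned degree $j+2$ (so $e_{0k}^{\pm1}$ sits in degree $0$, $h_k$ and $e_{ij}$ in degree $2$, $e_{k0}$ in degree $4$). Additivity of the $\Z$-grading on matrix-block indices gives $[\Gamma_a,\Gamma_b]\subseteq\Gamma_{a+b-2}$, so $\mathrm{gr}\,U_S$ is commutative; in fact it is the localization of the symmetric algebra $S(\sl_{n+1})$ at the images $\bar e_{0k}$, hence an integral domain. The principal symbols are $\sigma(x_{ij})=\bar e_{0i}\bar e_{0j}^{-1}\bar e_{ij}-\bar h_i$ (degree $2$) and $\sigma(\omega_k)=\bar e_{0k}\bar e_{k0}+\sum_{j}\bar e_{kj}\bar h_j\bar e_{0k}\bar e_{0j}^{-1}$ (degree $4$, with $\bar e_{kk}:=\bar h_k$). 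One checks that $\bar e_{0k}^{\pm1},\bar h_k$ together with the $\sigma(x_{ij}),\sigma(\omega_k)$ again generate $\mathrm{gr}\,U_S$ via the triangular substitution $\bar e_{ij}=\bar e_{0i}^{-1}\bar e_{0j}\bigl(\sigma(x_{ij})+\bar h_i\bigr)$ and then $\bar e_{k0}=\bar e_{0k}^{-1}\bigl(\sigma(\omega_k)-\sum_j\bar e_{kj}\bar h_j\bar e_{0k}\bar e_{0j}^{-1}\bigr)$, and that this exhibits an algebra automorphism of $\mathrm{gr}\,U_S$; consequently the monomials in $\bar e_{0k}^{\pm1},\bar h_k,\sigma(x_{ij}),\sigma(\omega_k)$ form a $\C$-basis of $\mathrm{gr}\,U_S$. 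Since symbols multiply in a domain, lifting this basis (the filtration is exhaustive and separated) shows that the ordered monomials
\[
\textstyle\prod_k e_{0k}^{a_k}\,\prod_k h_k^{b_k}\,\prod_{i\ne j}x_{ij}^{c_{ij}}\,\prod_k\omega_k^{d_k}\qquad(a_k\in\Z,\ b_k,c_{ij},d_k\in\Z_{\ge 0})
\]
form a $\C$-basis of $U_S$. Splitting each monomial into its $B$-part $\prod e_{0k}^{a_k}\prod h_k^{b_k}$ and its $W$-part $\prod x_{ij}^{c_{ij}}\prod\omega_k^{d_k}$ shows at once that $\mu$ is injective, so $U_S\cong B\otimes W$, whence $Z(U_S)\cong Z(B)\otimes Z(W)=\C\otimes Z(W)\cong Z(W)$; and, after noting that any $w\in W$, expanded in this basis, can only involve monomials of pure $W$-type (apply $\mathrm{ad}\,h_m$ and $\mathrm{ad}\,e_{0m}$ to the expansion and use the basis property), one obtains $(b)$.

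I expect the injectivity/basis step to be the main obstacle. The delicate point is that $x_{ij}$ and $h_i$ carry the same Kazhdan degree --- unavoidably, since $x_{ij}$ is $e_{ij}$ corrected by $h_i$ --- so no grading separates them and the argument cannot reduce to a crude leading-term comparison: what is really needed is that, after passing to $\mathrm{gr}$, trading the $\bar e_{ij},\bar e_{k0}$ for $\sigma(x_{ij}),\sigma(\omega_k)$ is a genuine triangular, invertible change of coordinates (in effect, a change of coordinates on the localized Slodowy slice), and this is where the block structure of $e$ enters. A secondary technical nuisance is that the graded pieces of $\mathrm{gr}\,U_S$ are infinite-dimensional (all powers of $e_{0k}^{\pm1}$ lie in degree $0$), so the symbol and lifting arguments should be run inside $\mathrm{gr}\,U_S$ itself rather than through finite-dimensional leading coefficients.
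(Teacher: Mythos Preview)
The paper does not prove this theorem: it is quoted verbatim from \cite{LL} (Lemma~7 and Theorem~8 there), so there is no ``paper's own proof'' to compare against. Your proposal therefore stands or falls on its own merits, and on that score it is a sound and standard strategy. The surjectivity step is correct as written, the computation $Z(B)=\C$ is right, and deducing $Z(U_S)\cong Z(W)$ from $U_S\cong B\otimes W$ together with $Z(B)=\C$ is a clean two-line argument (write a central element with linearly independent $W$-components, commute with $B$, use $Z(B)=\C$). Your final step recovering $(b)$ from the global basis is also fine: once the ordered monomials in $e_{0k}^{\pm1},h_k,x_{ij},\omega_k$ are a basis of $U_S$, an element $w\in W$ expanded in this basis must have all $e_{0k}$-exponents zero (apply $\mathrm{ad}\,h_m$, which scales each basis monomial by $-a_m$) and then all $h$-exponents zero (apply $\mathrm{ad}\,e_{0m}$ and use that the result acquires a genuine $e_{0m}$ factor whenever some $b_m>0$).

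The filtration argument is the heart of the matter and your outline is essentially correct, but let me flag the one place where care is genuinely needed. You want to say that in $\mathrm{gr}\,U_S$ the substitution $(\bar e_{ij},\bar e_{k0})\mapsto(\sigma(x_{ij}),\sigma(\omega_k))$ (over the ground ring generated by $\bar e_{0k}^{\pm1},\bar h_k$) is an invertible polynomial change of variables. This is true, and the triangularity you describe makes it transparent, because the $\bar e_{0k}$ are units; but note that it is \emph{not} an automorphism of $\mathrm{gr}\,U_S$ as a graded $\C$-algebra (the substitution mixes $\bar e_{ij}$ with $\bar h_i$, both in degree~$2$), only an automorphism of the underlying commutative algebra. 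That is enough for your purposes: algebraic independence of the new generators, hence a monomial basis of $\mathrm{gr}\,U_S$, hence by the usual symbol-lifting argument a monomial basis of $U_S$. This is exactly the kind of argument used to establish PBW theorems for finite $W$-algebras (cf.\ \cite{Pr1,Pr2}), and is very likely close in spirit to what \cite{LL} does, though possibly with a different filtration or a more hands-on induction.
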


It seems that the isomorphism in  Theorem  \ref{main-iso-th} may
have some potential connection with the conjecture  of Gelfand and Kirillov on skew-field of $U(\sl_{n+1})$, see \cite{GK}.The following theorem is Theorem 9 in \cite{LL}. Here we give a new proof.

\begin{theorem}\label{iso-thw} We have the algebra isomorphism $W\cong W(e)$.
\end{theorem}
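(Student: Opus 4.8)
The plan is to realize $W(e)$ inside the localization $U_S$ and then read the isomorphism off the tensor decomposition $U_S\cong B\otimes W$ of Theorem~\ref{main-iso-th}. The first step is to promote $Q_{\b1}$ to a $U_S$-module. I would check that each $e_{0i}-1$ acts locally nilpotently on $Q_{\b1}$: in the Kazhdan filtration of $Q_{\b1}\cong U(\mathfrak{p}_n)$ (in which $\mg(0)$ sits in degree $2$ and $\mg(2)$ in degree $4$, so that $F_0Q_{\b1}=\C\bar 1$), for $y$ of degree $d$ one has $e_{0i}\cdot\overline y=\overline{ye_{0i}}+\overline{[e_{0i},y]}=\overline y+(\text{an element of }F_{d-2}Q_{\b1})$, since the bracket lowers degree by $2$ and $\overline{ye_{0i}}=\overline y$. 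Hence $e_{0i}-1$ strictly lowers the degree, so every $s\in S$ acts invertibly on $Q_{\b1}$, the module $Q_{\b1}$ is canonically a $U_S$-module, the natural map $Q_{\b1}\to U_S\otimes_{U(\sl_{n+1})}Q_{\b1}=U_S/U_SI_{\b1}=:\bar Q$ is an isomorphism of $U_S$-modules, and every $\sl_{n+1}$-endomorphism of $Q_{\b1}$ is automatically $U_S$-linear. Therefore $W(e)=\mathrm{End}_{\sl_{n+1}}(Q_{\b1})^{\mathrm{op}}\cong\mathrm{End}_{U_S}(\bar Q)^{\mathrm{op}}$, and since $\bar Q$ is generated by $\bar 1$ the rule $\xi\mapsto\xi(\bar 1)$ identifies $\mathrm{End}_{U_S}(\bar Q)$ with the space of Whittaker vectors $\{v\in\bar Q\mid (e_{0i}-1)v=0,\ i=1,\dots,n\}$.

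The second step computes $\bar Q$ and this space from $U_S\cong B\otimes W$. Each $e_{0i}$ lies in $B$ and commutes with $W$, so $U_SI_{\b1}=\sum_i U_S(e_{0i}-1)=J\otimes W$, where $J:=\sum_i B(e_{0i}-1)$ is a left ideal of $B$; thus $\bar Q\cong(B/J)\otimes W$ as $B\otimes W$-modules, with $B$ acting on the first tensorand and $W$ on itself by left multiplication. The relations $[h_i,e_{0i}]=-e_{0i}$ together with the fact that the pairs $(h_i,e_{0i}^{\pm1})$ commute with one another show $B\cong\bigotimes_{i=1}^n\langle h_i,e_{0i}^{\pm1}\rangle$, whence $B/J\cong\C[h_1,\dots,h_n]$ with $e_{0i}$ acting as the unit translation $h_i\mapsto h_i+1$. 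A polynomial fixed by all these translations is constant, so the Whittaker vectors of $\bar Q$ are exactly $\C\bar 1\otimes W$, i.e.\ the image $\overline W$ of $W\subseteq U_S$ in $\bar Q$.

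Finally I would track the algebra structure. One verifies that $w\mapsto\xi_w$, with $\xi_w(\overline u):=\overline{uw}$ for $u\in U_S$, is a well-defined map $W\to\mathrm{End}_{U_S}(\bar Q)$ (well-definedness uses $(e_{0i}-1)w=w(e_{0i}-1)$) satisfying $\xi_w\circ\xi_{w'}=\xi_{w'w}$; combined with the previous step this yields an algebra isomorphism $W\xrightarrow{\ \sim\ }\mathrm{End}_{U_S}(\bar Q)^{\mathrm{op}}=W(e)$, which under $Q_{\b1}\cong U(\mathfrak{p}_n)$ carries $w$ to the unique element of $U(\mathfrak{p}_n)^{\mg(-2)}$ congruent to $w$ modulo $U_SI_{\b1}$. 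The substantive inputs are Theorem~\ref{main-iso-th} (already available) plus the two computations above — that $Q_{\b1}$ is a Whittaker module (standard, but the filtration estimate must be made precise in order to licence passing to $U_S$), and the identification of $B/J$ with the translation module $\C[h_1,\dots,h_n]$. The only real pitfall is bookkeeping: the several ``$\mathrm{op}$''s — in the definition of $W(e)$, in the standard description of $\mathrm{End}$ of a cyclic module via its idealizer, and in the map $w\mapsto\xi_w$ — have to be tracked so that they compose to give $W$ itself, not $W^{\mathrm{op}}$.
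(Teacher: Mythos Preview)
Your proposal is correct and essentially matches the paper's proof: both use Theorem~\ref{main-iso-th} to factor $Q_{\b1}\cong B_{\b1}\otimes W$ (your $\bar Q\cong(B/J)\otimes W$) and then read off the endomorphism ring from the tensor factorization. The paper phrases the $B$-side step as ``$B_{\b1}$ is irreducible, hence $\mathrm{End}_B(B_{\b1})=\C$'' rather than via Whittaker vectors, and it silently assumes the extension of $Q_{\b1}$ to a $U_S$-module that you justify explicitly via the filtration argument.
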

\begin{proof}Let $B_{\b1}=B \otimes_{U(\mg(-2) )}\C_{\b1}$. As a vector space, $B_{\b1}=\C[h_1,\dots,h_n]v_{\b1}$. From
$$(e_{0i}-1)g(h_1,\dots, h_n)v_{\b1}=(g(h_1,\dots, h_i+1,\dots, h_n)-g(h_1,\dots, h_n))v_{\b1},$$ we see that $(e_{0i}-1)$ decreases the degree of $h_i$ in $g(h_1,\dots, h_n)v_{\b1}$. Consequently  any nonzero $B$-submodule of $B_{\b1}$ must contain the generator $v_{\b1}$. So $B_{\b1}$ is an irreducible $B$-module.
Then by Theorem \ref{main-iso-th}, the module $Q_{\b1}$ defined by (\ref{qmod}) is isomorphic to $B_{\b1}\otimes W$.
Since $B$ is an associative $\C$-algebra of countable dimension,
by Schur's Lemma (see Lemma 1 in \cite{PS}), $\text{End}_{ B}(B_{\b1} )\cong \C$.
Then
$W(e)\cong \text{End}_{ B \otimes W}(B_{\b1}\otimes W )^{\text{op}}\cong \text{End}_{W}(W )^{\text{op}}\cong W$.
\end{proof}

\subsection{Weight  $\sl_{n+1}$-modules and
finite-dimensional $W$-modules}

An $\sl_{n+1}$-module $M$ is a weight module if
\[
 M=\bigoplus_{\alpha\in\C^n}M_\alpha,
 \qquad
 M_\alpha=\{v\in M\mid h_iv=\alpha_iv\text{ for }1\leq i\leq n\}.
\]
 Denote $\mathrm{Supp}(M):=\{\alpha\in \C^n \mid M_\alpha\neq0\}$.
A weight
$\sl_{n+1}$-module $M$ is cuspidal if  all weight spaces $M_{\alpha}$ are finite-dimensional, and $e_{ij}: M\rightarrow
M$ is  injective for all $i, j: 0\leq i \neq j \leq n$.  Denote by $\mathcal{C}$ the category of all cuspidal
$\sl_{n+1}$-modules.
The classification of all irreducible weight
modules with finite-dimensional weight spaces over reductive Lie algebras can be reduced to the classification of irreducible cuspidal modules, see \cite{F}. The classification of irreducible cuspidal modules was given in \cite{M} using  the twisted localization method and the coherent family. However, the characterization of weight spaces of any cuspidal module is not clear.
 In this paper, we will use
finite-dimensional irreducible $W$-modules to realize  all irreducible cuspidal
$\sl_{n+1}$-modules.

\begin{corollary}\label{weight-space} If $M=\oplus_{\alpha\in \Z^n} M_{\mu+\alpha}$ is an irreducible cuspidal $\sl_{n+1}$-module,
then $M_{\mu}$ is an irreducible finite-dimensional $W$-module.
\end{corollary}

\begin{proof}Since every root vector acts bijectively on
$M$, $M$ can be extended to be a $U_S$-module. From $[W,\mh_n]=0$,
$WM_{\mu}\subset M_{\mu}$, i.e., $M_{\mu}$ is a $W$-module.
By the isomorphism $U_{S}\cong B \otimes W $, we have that
$M=\C[e_{01}^{\pm1},\dots,e_{0n}^{\pm 1}]\otimes M_{\mu}$.
If $V$ is a nonzero $W$-submodule of  $M_{\mu}$, then $\C[e_{01}^{\pm1},\dots,e_{0n}^{\pm 1}]\otimes V$ is a nonzero
$\sl_{n+1}$-submodule of $M$.
The irreducibility of
$M$ implies that  $V= M_{\mu}$. Therefore $M_{\mu}$ is an irreducible $W$-module.
\end{proof}

\subsection{An algebra homomorphism from $W$ to $U(\gl_n)$}

By Theorem \ref{iso-thw}, we identify $W(e)$ with the subalgebra $W$ of $U_S$. Using the isomorphisms  $W\cong W(e)$ and $ W(e)\cong U(\mathfrak{p}_n)^{\mg(-2)}$, by identifying $h_i$ with $e_{ii}$ for $1\leq i\leq n$,
we have the  algebra homomorphism
 $$\phi:W\rightarrow U(\mathfrak{p}_n)$$ such that:
\begin{equation}\label{x-w-act1}\aligned x_{ij} & \mapsto  e_{ij}-e_{ii},\ 1\leq i\neq j\leq n,\\
 \omega_i
&\mapsto e_{i0}+\sum_{j=1}^n (e_{ij}e_{jj}-e_{ij}),\ 1\leq i\leq n.
\endaligned \end{equation}

Consider the composition of $\phi$ and the canonical projection
from $\mathfrak{p}_n$ to $\mathfrak{p}_n/\mg(2)\cong \gl_n$,
we have the following homomorphism.

\begin{lemma}\label{tau}
There is an injective  homomorphism
 $$\tau:W\rightarrow U(\gl_n)$$ such that:
\begin{equation}\label{x-w-act}\aligned x_{ij} & \mapsto  e_{ij}-e_{ii},\ 1\leq i\neq j\leq n,\\
 \omega_i
&\mapsto \gamma_i:=\sum_{j=1}^n (e_{ij}e_{jj}-e_{ij}),\ 1\leq i\leq n.
\endaligned \end{equation}
\end{lemma}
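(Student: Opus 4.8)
\emph{Proof proposal.} The construction of $\tau$ is essentially already in hand. Since $\mg(2)=\bigoplus_{i=1}^{n}\C e_{i0}$ is an abelian ideal of $\mathfrak{p}_n=\mg(0)\oplus\mg(2)$, the canonical Lie algebra projection $\mathfrak{p}_n\to\mathfrak{p}_n/\mg(2)\cong\gl_n$ extends to an algebra epimorphism $\rho\colon U(\mathfrak{p}_n)\to U(\gl_n)$ that restricts to the identity on $U(\gl_n)=U(\mg(0))$ and annihilates $\mg(2)$. Taking $\tau:=\rho\circ\phi$, we obtain an algebra homomorphism $W\to U(\gl_n)$; formulas \eqref{x-w-act} follow at once from \eqref{x-w-act1}, since $\rho$ fixes $e_{ij}-e_{ii}\in\mg(0)$ and kills $e_{i0}\in\mg(2)$. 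Thus the genuine content of the lemma is the injectivity of $\tau$, and that is where the whole argument lies.

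For injectivity the plan is to pass to associated graded algebras. Equip $U(\gl_n)$ with its standard PBW filtration $F_0\subseteq F_1\subseteq\cdots$, so that $\operatorname{gr}U(\gl_n)=S(\gl_n)=\C[t_{ij}\mid 1\le i,j\le n]$, with $t_{ij}$ the symbol of $e_{ij}$. Then $\tau(x_{ij})=e_{ij}-e_{ii}\in F_1$ has nonzero symbol $u_{ij}:=t_{ij}-t_{ii}$ (here $i\neq j$), while $\tau(\omega_i)=\gamma_i\in F_2$ has symbol $v_i:=\sum_{j=1}^{n}t_{ij}t_{jj}$. Now take the monomial basis of $W$ from Theorem \ref{main-iso-th}(b). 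Given $0\neq w\in W$, expand $w$ in this basis and let $d$ be the largest value of $\sum_{i\ne j}a_{ij}+2\sum_{k}b_{k}$ over the monomials $\prod x_{ij}^{a_{ij}}\prod\omega_k^{b_k}$ occurring with nonzero coefficient. Then $\tau(w)$ lies in $F_d$, and since $\tau$ is an algebra homomorphism its image in $S(\gl_n)_d=F_d/F_{d-1}$ is precisely the same linear combination of the corresponding monomials $\prod u_{ij}^{a_{ij}}\prod v_k^{b_k}$ of total degree $d$. Consequently the injectivity of $\tau$ reduces to one assertion: the elements $u_{ij}$ $(1\le i\neq j\le n)$ and $v_i$ $(1\le i\le n)$, of which there are $n(n-1)+n=n^2$ in all, are algebraically independent over $\C$ in $S(\gl_n)$.

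That algebraic independence is the one real computation, and I expect it to be the main obstacle. I would prove it via the Jacobian criterion. Replacing, for $i\neq j$, the coordinate $t_{ij}$ by $u_{ij}=t_{ij}-t_{ii}$ and setting $d_i:=t_{ii}$ gives a polynomial coordinate system for $S(\gl_n)$, in which $v_i=d_i\bigl(\sum_{j}d_j\bigr)+\sum_{j\neq i}u_{ij}d_j$. The Jacobian matrix of the functions $(u_{ij},v_i)$ with respect to the coordinates $(u_{ij},d_i)$ is block triangular with an identity block, so its determinant equals $\det\bigl(\partial v_i/\partial d_k\bigr)_{i,k}$. Specializing every $u_{ij}$ to $0$ turns this $n\times n$ matrix into $p\,I_n+D\mathbf 1^{\top}$, where $p=\sum_{j}d_j$, $D=(d_1,\dots,d_n)^{\top}$ and $\mathbf 1=(1,\dots,1)^{\top}$; its determinant is $p^{n-1}(p+\mathbf 1^{\top}D)=2p^{n}$, which is nonzero in $\C[d_1,\dots,d_n]$. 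Hence the full Jacobian determinant is a nonzero polynomial, the $u_{ij}$ and $v_i$ are algebraically independent, and $\tau$ is injective. (This injectivity is also a special case of the known injectivity of the Miura map for type-$A$ finite $W$-algebras, but I would prefer the above self-contained argument, which uses only Theorem \ref{main-iso-th}.)
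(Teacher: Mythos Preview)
Your argument is correct. Both you and the paper reduce injectivity to a PBW leading-term statement, but the executions differ. The paper changes to the basis $X_{ij}:=e_{ij}-e_{ii}$ ($i\neq j$), $X_i:=e_{ii}$ of $\gl_n$, notes that $\gamma_i=X_i^2-X_i+\sum_{j\neq i}X_{ij}(X_j-1)$, and argues directly that $\tau(\omega_1^{r_1}\cdots\omega_n^{r_n})$ has $X_1^{2r_1}\cdots X_n^{2r_n}$ as its unique PBW monomial of top diagonal degree among those free of any $X_{ij}$; combined with $\tau(x_{ij})=X_{ij}$ this triangularity yields linear independence of the images of the basis monomials of $W$ in one stroke. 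Your route---pass to $\operatorname{gr}U(\gl_n)=S(\gl_n)$, reduce injectivity to the algebraic independence of the $n^2$ symbols $u_{ij},v_k$, and then verify that by a Jacobian computation---is longer but more self-contained, and it avoids the somewhat implicit combinatorics behind the paper's last sentence. The paper's approach is shorter precisely because the triangularity is already manifest in the $X$-basis; yours is the argument one would reach for when no such convenient basis is in sight, and it makes transparent why exactly $\dim\gl_n$ generators suffice.
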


\begin{proof}We show that $\tau$ is injective. Denote $X_{ij}=e_{ij}-e_{ii}, i\neq j, X_i=e_{ii}$. Then $\{X_{ij}, X_i\mid 1\leq i\neq j\leq n\}$ is a basis of $\gl_n$.
From $$\gamma_i=\sum\limits_{j=1\atop j\neq i}^n ((X_{ij}+X_i)X_j-X_{ij}-X_i)+X_i^2-X_i,$$
it follows that
$$\tau(\omega_1^{r_1}\dots\omega_n^{r_n})=\gamma_1^{r_1}\dots\gamma_n^{r_n}=X_1^{2r_1}\dots X_n^{2r_n}
+g(X_{ij},X_i),$$
where $g(X_{ij},X_i)\in U(\gl_n)$ such that any PBW monomial in $g(X_{ij},X_i)$
is not equal to $X_1^{2r_1}\dots X_n^{2r_n}$.
By (b) in Theorem \ref{main-iso-th}, a general monomial in $W$  has the form
$$\prod_{i\neq j}x_{ij}^{\alpha_{ij}}\prod_{k=1}^n\omega_k^{r_k}.$$
After applying $\tau$, its leading term is
$$\prod_{i\neq j}X_{ij}^{\alpha_{ij}}X_1^{2r_1}\dots X_n^{2r_n},$$
up to lower-order terms.
Hence
 $\tau$ maps nonzero elements
in $W$
to   nonzero elements in $U(\gl_n)$.
Therefore $\tau$ is injective.
\end{proof}

By Lemma \ref{tau}, any  $\gl_n$-module $V$
can be defined to be a $W$-module denoted by $V^\tau$
such that $x\cdot v=\tau(x)v$ for all $x\in W,v\in V$.
We will show that any finite-dimensional irreducible $W$-module is
isomorphic to an irreducible $W$-quotient of the $W$-module $V^\tau$ for some finite-dimensional irreducible $\gl_n$-module $V$.

\begin{remark}The homomorphism $\tau$ is similar to the {\it Miura map}
$\mu$  introduced by Premet, see Remark 2.2 in \cite{Pr2}.
\end{remark}

\section{Finite-dimensional irreducible $W$-modules}\label{Sec.3}

In this section, we study finite-dimensional irreducible $W$-modules using a subalgebra $\sigma\tau(W)\subset U(\gl_n)$ which is isomorphic to $W$. The subalgebra $\sigma\tau(W)$ has a triangular decomposition. Thus we can study finite-dimensional irreducible $\sigma\tau(W)$-modules using the theory of highest weight modules.

\subsection{The structure of  $\sigma\tau(W)$  }

Let $\sigma:=\textup{exp}(-\textup{ad}X)$ which is an  automorphism of $U(\gl_n)$, where $X=\sum_{k=1}^{n-1}e_{kn}$.
It turns out that the algebra $\sigma\tau(W)$ is easier
 than $W$.

The following lemma is used to find good generators of $\sigma\tau(W)$.

 \begin{lemma}\label{sigma} For $1\leq i,j \leq n-1 $, we have
  \begin{enumerate}[$($1$)$]
 \item $\sigma(e_{ij}-e_{in})=e_{ij}$.
 \item $\sigma(e_{nj}-e_{nn})= e_{nj}-\sum_{k=1}^{n-1}e_{kj}$.
  \item $\sigma(\gamma_i)=\gamma_i+e_{in}(I_{n-1}-n+1)
+\sum_{j=1}^{n-1} e_{ij}e_{jn}$.
 \item $\sigma(\gamma_n)=-\sum_{k=1}^{n-1}\sigma( \gamma_k)
+\gamma_n+e_{nn}(I_{n-1}-n+1)+ \sum_{k=1}^{n-1} e_{nk}e_{kn}$.
   \end{enumerate}
\end{lemma}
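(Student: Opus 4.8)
The plan is to exploit the fact that $\sigma$ is, at the level of matrices, conjugation by a very simple unipotent element. Since the matrix units $e_{1n},\dots,e_{n-1,n}$ pairwise commute and each has square $0$, the matrix $X=\sum_{k=1}^{n-1}e_{kn}$ satisfies $X^{2}=0$ in $\gl_{n}$; hence $\mathrm{ad}\,X$ is nilpotent on $\gl_{n}$, and, being a derivation of the algebra generated by $\gl_{n}$, it is locally nilpotent on $U(\gl_{n})$, so $\sigma=\exp(-\mathrm{ad}\,X)$ is a well-defined algebra automorphism. On $\gl_{n}\subset U(\gl_{n})$ we have $\sigma=\exp(\mathrm{ad}(-X))=\mathrm{Ad}(\exp(-X))=\mathrm{Ad}(I-X)$, and since $(I-X)^{-1}=I+X$ this reads
\[
\sigma(Y)=(I-X)\,Y\,(I+X)\qquad(Y\in\gl_{n}),
\]
the product being taken in the matrix algebra; on all of $U(\gl_{n})$, $\sigma$ is the unique algebra automorphism extending this.

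First I would compute $\sigma$ on each matrix unit by multiplying out $(I-X)e_{ij}(I+X)$, splitting into the cases $i<n$ or $i=n$ and $j<n$ or $j=n$. This yields at once $\sigma(e_{ij})=e_{ij}+e_{in}$ for $i,j<n$, $\sigma(e_{in})=e_{in}$ for $i<n$, $\sigma(e_{nn})=e_{nn}-X$, and $\sigma(e_{nj})=e_{nj}-\sum_{k=1}^{n-1}e_{kj}+e_{nn}-X$ for $j<n$. Parts $(1)$ and $(2)$ follow immediately by subtracting $\sigma(e_{in})$, respectively $\sigma(e_{nn})$.

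For $(3)$ and $(4)$ I would use multiplicativity, writing $\sigma(\gamma_{i})=\sum_{j=1}^{n}\bigl(\sigma(e_{ij})\sigma(e_{jj})-\sigma(e_{ij})\bigr)$, substituting the formulas above, and expanding every product in $U(\gl_{n})$ — keeping each as a genuine degree-two element and reordering only via $[e_{ab},e_{cd}]=\delta_{bc}e_{ad}-\delta_{da}e_{cb}$ where needed. For $(3)$, with $i\le n-1$, the indices $j<n$ contribute $\sum_{j<n}\bigl[(e_{ij}+e_{in})(e_{jj}+e_{jn})-(e_{ij}+e_{in})\bigr]$ and $j=n$ contributes $e_{in}(e_{nn}-X)-e_{in}$; using $\sum_{j<n}e_{jj}=I_{n-1}$ and $\sum_{j<n}e_{jn}=X$, the two occurrences of $e_{in}X$ cancel and what remains is exactly $\gamma_{i}+\sum_{j<n}e_{ij}e_{jn}+e_{in}(I_{n-1}-n+1)$. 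For $(4)$, the cleanest route is to observe that, by linearity of $\sigma$, the asserted identity is equivalent to
\[
\sigma\Bigl(\sum_{k=1}^{n}\gamma_{k}\Bigr)=\gamma_{n}+e_{nn}(I_{n-1}-n+1)+\sum_{k=1}^{n-1}e_{nk}e_{kn}.
\]
Writing $\sum_{k=1}^{n}\gamma_{k}=\sum_{j=1}^{n}c_{j}(e_{jj}-1)$ with $c_{j}=\sum_{k=1}^{n}e_{kj}$, the crucial simplification is that summing the matrix-unit formulas gives $\sigma(c_{j})=e_{nj}+e_{nn}$ for $j<n$ (the terms $X=\sum_{k<n}e_{kn}$ and $-X$ cancel) and $\sigma(c_{n})=e_{nn}$; substituting these together with $\sigma(e_{jj})$ and expanding, the $e_{nn}X$ terms cancel in the same way and one arrives precisely at the right-hand side.

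I expect the only genuine difficulty to be the bookkeeping for $(4)$: one must be careful that products in $U(\gl_{n})$ are honest degree-two elements rather than matrix products, and that $e_{kn}$ does not commute with $I_{n-1}$ (indeed $[e_{kn},I_{n-1}]=-e_{kn}$ for $k<n$), so the order of the factors in each monomial has to be tracked. This is also why I would use the conjugation formula rather than directly truncate the series $\sum_{m\ge0}\bigl((-1)^{m}/m!\bigr)(\mathrm{ad}\,X)^{m}$: $\mathrm{ad}\,X$ has nilpotency order $2$ on the elements in $(1)$, $3$ on those in $(2)$, and higher still on the quadratic $\gamma_{n}$, so that series is longer and messier than the matrix computation. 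Parts $(1)$--$(3)$ are routine once the conjugation formula is in hand.
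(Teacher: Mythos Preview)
Your argument is correct and takes a genuinely different route from the paper's proof. The paper proves each item by computing the exponential series directly: for $(1)$ only $(-\mathrm{ad}\,X)^{1}$ is needed, for $(2)$ and $(3)$ they compute up to $(-\mathrm{ad}\,X)^{2}$, and for $(4)$ they go out to $(-\mathrm{ad}\,X)^{3}$ on $\gamma_{n}$, then add $\sum_{k<n}\sigma(\gamma_{k})$ to the result. You instead observe that $X^{2}=0$ in the matrix algebra, so on $\gl_{n}$ the automorphism is honest conjugation $\sigma(Y)=(I-X)Y(I+X)$; you read off $\sigma$ on each matrix unit in one line, deduce $(1)$ and $(2)$ by subtraction, and handle $(3)$ and $(4)$ by multiplicativity of $\sigma$ on products in $U(\gl_{n})$. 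Your trick for $(4)$ of passing to $\sum_{k}\gamma_{k}=\sum_{j}c_{j}(e_{jj}-1)$ and noting that the column sums collapse under $\sigma$ to $\sigma(c_{j})=e_{nj}+e_{nn}$ (for $j<n$) and $\sigma(c_{n})=e_{nn}$ is a real simplification over the paper's third-order commutator computation. The paper's approach has the minor advantage of being completely self-contained within $U(\gl_{n})$ (no appeal to the matrix-algebra identity $\exp(\mathrm{ad}\,Z)=\mathrm{Ad}(\exp Z)$), but yours is shorter and less error-prone, and your caution that products such as $e_{in}I_{n-1}$ live in $U(\gl_{n})$ rather than in the matrix algebra is exactly the point that needs watching.
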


\begin{proof}
(1)
It is easy to see that $$\aligned \sigma(e_{ij}-e_{in})&=e_{ij}-e_{in}+[e_{ij}-e_{in},X]\\
&= e_{ij}-e_{in}+e_{in}=
e_{ij}.
\endaligned $$

(2)  From $$ [e_{nj}-e_{nn}, X]=e_{nn}-\sum_{k=1}^{n-1}e_{kj}+X$$
and  $$  (\mathrm{ad}X)^2(e_{nj}-e_{nn})=-2X, $$
we have
$$\aligned \sigma(e_{nj}-e_{nn})&=e_{nj}-e_{nn}+e_{nn}-\sum_{k=1}^{n-1}e_{kj}
&= e_{nj}-\sum_{k=1}^{n-1}e_{kj}.
\endaligned $$

(3)
Since $$\aligned\  [\gamma_i,X]&=[ \sum_{j=1}^n (e_{ij}e_{jj}-e_{ij}), \sum_{k=1}^{n-1}e_{kn}]
%& =\sum_{j=1}^{n-1} (e_{in}e_{jj}+e_{ij}e_{jn})
%-e_{in}\sum_{k=1}^{n-1}e_{kn}-(n-1)e_{in}\\
=\sum_{j=1}^{n-1} (e_{in}e_{jj}+e_{ij}e_{jn})
-e_{in}X-(n-1)e_{in}
\endaligned $$
and
$$\aligned\ (\mathrm{ad}X)^2(\gamma_i) & =  [ \sum_{j=1}^{n-1} (e_{in}e_{jj}+e_{ij}e_{jn}), \sum_{k=1}^{n-1}e_{kn}]
%\\
%&=\sum_{j=1}^{n-1} (e_{in}e_{jn}+e_{in}e_{jn})
=2 e_{in}X,
\endaligned $$
we obtain
$$\aligned\  \sigma( \gamma_i)
&=\gamma_i+e_{in}(I_{n-1}-n+1)
+\sum_{j=1}^{n-1} e_{ij}e_{jn}.
\endaligned $$

(4)
 From $$\aligned\  [\gamma_n, X]&=[ \sum_{j=1}^n (e_{nj}e_{jj}-e_{nj}), \sum_{k=1}^{n-1}e_{kn}]\\
%&=\sum_{k=1}^{n-1} e_{nn}e_{kk}-\sum_{k=1}^{n-1}\sum_{j=1}^n e_{kj} e_{jj}
%-\sum_{k=1}^{n-1} e_{nn}+\sum_{k=1}^{n-1}\sum_{j=1}^n e_{kj} \\
%&+ \sum_{k=1}^{n-1} e_{nk}e_{kn}-e_{nn}\sum_{k=1}^{n-1}e_{kn}\\
&= e_{nn}I_{n-1}-\sum_{k=1}^{n-1}\gamma_k-(n-1)e_{nn}+ \sum_{k=1}^{n-1} e_{nk}e_{kn}-e_{nn}X,
\endaligned $$

$$\aligned\  & (\mathrm{ad}X)^2(\gamma_n)\\
 &= [ e_{nn}I_{n-1}-\sum_{k=1}^{n-1}\gamma_k-(n-1)e_{nn}+ \sum_{k=1}^{n-1} e_{nk}e_{kn}-e_{nn}X, X]\\
%&= -XI_{n-1}+e_{nn}X-\sum_{k=1}^{n-1}[\gamma_k, X]+(n-1)X+e_{nn}X-\sum_{k,l=1}^{n-1}e_{lk}e_{kn}
%+X^2\\
%& = -XI_{n-1}+2e_{nn}X-\sum_{k=1}^{n-1}(\sum_{j=1}^{n-1} \big(e_{kn}e_{jj}+e_{kj}e_{jn})
%-e_{kn}X-(n-1)e_{kn}\big)+(n-1)X-\sum_{k,l=1}^{n-1}e_{lk}e_{kn}
%+X^2\\
%& = -XI_{n-1}+2e_{nn}X-\sum_{k=1}^{n-1}\sum_{j=1}^{n-1} (e_{kn}e_{jj}+e_{kj}e_{jn})+2(n-1)X-\sum_{k,l=1}^{n-1}e_{lk}e_{kn}
%+2X^2\\
%& = -2XI_{n-1}+2e_{nn}X- \sum_{k=1}^{n-1}\sum_{j=1}^{n-1} e_{kj}e_{jn}+2(n-1)X-\sum_{k,l=1}^{n-1}e_{lk}e_{kn}
%+2X^2\\
& = -2XI_{n-1}+2e_{nn}X- 2\sum_{k=1}^{n-1}\sum_{j=1}^{n-1} e_{kj}e_{jn}+2(n-1)X
+2X^2,
\endaligned $$
and
$$\aligned\  -(\mathrm{ad}X)^3(\gamma_n)
&=[ -2XI_{n-1}+2e_{nn}X- 2\sum_{k=1}^{n-1}\sum_{j=1}^{n-1} e_{kj}e_{jn},X]= -6X^2,
\endaligned $$
it follows that
$$\aligned \sigma(\gamma_n)=&
\gamma_n+e_{nn}I_{n-1}-\sum_{k=1}^{n-1}\gamma_k-(n-1)e_{nn}+ \sum_{k=1}^{n-1} e_{nk}e_{kn}\\
& -XI_{n-1}- \sum_{k=1}^{n-1}\sum_{j=1}^{n-1} e_{kj}e_{jn}+(n-1)X,
\endaligned $$
and
$$\aligned \sigma(\gamma_n)+\sum_{k=1}^{n-1}\sigma(\gamma_k)& =
\gamma_n+e_{nn}I_{n-1}-(n-1)e_{nn}+ \sum_{k=1}^{n-1} e_{nk}e_{kn}\\
&\ \ -XI_{n-1}+(n-1)X
+X(I_{n-1}-n+1)\\
& =\gamma_n+e_{nn}(I_{n-1}-n+1)+ \sum_{k=1}^{n-1} e_{nk}e_{kn}.
\endaligned $$
Then (4) follows.

\end{proof}

The following lemma gives generators
of the algebra $\sigma\tau(W)$.

\begin{lemma}\label{w-algebra-3.2}
The elements
$$e_{ij}, e_{ni}, 1 \leq i,j \leq n-1,$$
$$ y_k:=e_{kn}(I_{n}-n)
+\sum_{j=1}^{n-1} e_{kj}e_{jn}  , 1 \leq k \leq n,$$
belong to  $\sigma\tau(W)$. Moreover
$\sigma\tau(W)$ is  generated by these elements.
\end{lemma}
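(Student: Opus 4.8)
The plan is to combine Theorem~\ref{main-iso-th}(b), which says that $W$ is generated as an algebra by the $x_{ij}$ ($1\le i\ne j\le n$) and the $\omega_k$ ($1\le k\le n$), with the explicit formulas of Lemma~\ref{sigma}. Since $\tau(x_{ij})=e_{ij}-e_{ii}$ and $\tau(\omega_k)=\gamma_k$, the algebra $\sigma\tau(W)$ is generated by the images $\sigma\tau(x_{ij})=\sigma(e_{ij}-e_{ii})$ and $\sigma\tau(\omega_k)=\sigma(\gamma_k)$. So it suffices to express, in both directions, the listed elements $e_{ij},e_{ni},y_k$ and the generators $\sigma\tau(x_{ij}),\sigma\tau(\omega_k)$ in terms of one another; both directions will fall out of one short list of identities.

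The first step is to pass to the more convenient generating set of $W$ obtained by replacing $\{x_{ij}:1\le i\ne j\le n-1\}$ with $\{x_{ij}-x_{in}:1\le i\ne j\le n-1\}$ (this spans the same subspace, since the $x_{in}$ are retained). Then Lemma~\ref{sigma}(1) gives $\sigma\tau(x_{ij}-x_{in})=\sigma(e_{ij}-e_{in})=e_{ij}$ for $1\le i\ne j\le n-1$, and a one-line computation of the same type as in Lemma~\ref{sigma} (not literally contained there) gives $\sigma\tau(x_{in})=\sigma(e_{in}-e_{ii})=-e_{ii}$ for $1\le i\le n-1$; together these show $e_{ij}\in\sigma\tau(W)$ for all $1\le i,j\le n-1$, and also that $\sigma\tau(x_{ij})=e_{ij}-e_{ii}$ lies in the subalgebra generated by $\{e_{ij}:1\le i,j\le n-1\}$. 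Next, Lemma~\ref{sigma}(2) reads $\sigma\tau(x_{nj})=e_{nj}-\sum_{k=1}^{n-1}e_{kj}$, so $e_{nj}\in\sigma\tau(W)$ and $\sigma\tau(x_{nj})$ lies in the subalgebra generated by $\{e_{ij},e_{ni}\}$. For the quadratic generators I would use the identity $I_n-n=(I_{n-1}-n+1)+(e_{nn}-1)$ together with Lemma~\ref{sigma}(3)--(4) to rewrite $\sigma(\gamma_i)=y_i+\sum_{j=1}^{n-1}(e_{ij}e_{jj}-e_{ij})$ for $1\le i\le n-1$ and $\sum_{k=1}^{n}\sigma(\gamma_k)=y_n+\sum_{j=1}^{n-1}(e_{nj}e_{jj}-e_{nj})$. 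Since the correction sums on the right already lie in the subalgebra generated by $\{e_{ij},e_{ni}\}$, we get $y_1,\dots,y_n\in\sigma\tau(W)$; and solving these for $\sigma\tau(\omega_i)=\sigma(\gamma_i)$, and then for $\sigma\tau(\omega_n)=\sigma(\gamma_n)=\sum_{k\le n}\sigma(\gamma_k)-\sum_{k\le n-1}\sigma\tau(\omega_k)$ by recursion on the already-treated $\sigma\tau(\omega_k)$, shows that every $\sigma\tau(\omega_k)$ lies in the subalgebra generated by $\{e_{ij},e_{ni},y_k\}$.

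Combining the two inclusions finishes the proof: all the listed elements lie in $\sigma\tau(W)$, and they generate it because every algebra generator $\sigma\tau(x_{ij}),\sigma\tau(\omega_k)$ of $\sigma\tau(W)$ has been written as a polynomial in them. I expect no genuine obstacle here — everything reduces to the formulas already recorded in Lemma~\ref{sigma} plus the single evaluation of $\sigma$ on $e_{in}-e_{ii}$ — so the only real care needed is organizational: choosing the intermediate generating set of $W$ so that each required image is among those computed in Lemma~\ref{sigma}, keeping the non-commutative ordering straight when rearranging the $\gamma_k$ into the $y_k$, and verifying that each correction term is already known to lie in the relevant subalgebra at the moment it is used.
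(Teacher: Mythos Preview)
Your proposal is correct and follows essentially the same route as the paper's proof: both use Lemma~\ref{sigma} to show that the $e_{ij}$, $e_{ni}$ lie in $\sigma\tau(W)$, then rewrite $\sigma(\gamma_k)$ (and $\sum_k\sigma(\gamma_k)$ for $k=n$) as $y_k$ plus a correction in the already-established subalgebra, and finally appeal to Theorem~\ref{main-iso-th}(b) for generation. One small remark: your ``extra'' computation $\sigma(e_{in}-e_{ii})=-e_{ii}$ is in fact the $i=j$ case of Lemma~\ref{sigma}(1), so no additional calculation is needed there.
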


\begin{proof}

Let $\gl_{n-1}$ be the Lie subalgebra spanned by $e_{ij},  1 \leq i,j \leq n-1$. By Lemma \ref{sigma}, we have $\gl_{n-1}, \{e_{ni}| 1 \leq i\leq n-1\}\subset \sigma\tau(W)$.
Then the first stament follows from
$$\aligned\  \sigma( \gamma_k)
%&=\gamma_k+e_{kn}(I_{n-1}-n+1)
%+\sum_{j=1}^{n-1} e_{kj}e_{jn}\\
& = \sum_{j=1}^n (e_{kj}e_{jj}-e_{kj})
 +e_{kn}(I_{n-1}-n+1)
+\sum_{j=1}^{n-1} e_{kj}e_{jn}\\
  & \equiv   e_{kn}e_{nn}-e_{kn}
 +e_{kn}(I_{n-1}-n+1)
+\sum_{j=1}^{n-1} e_{kj}e_{jn} \ \   (\text{mod}\ U(\gl_{n-1}) )\\
  & \equiv
 y_k
\ \   (\text{mod}\ U(\gl_{n-1}) ),   1 \leq k \leq n-1,
\endaligned $$
and
$$\aligned \sigma(\gamma_n)+\sum_{k=1}^{n-1}\sigma(\gamma_k)
& = \sum_{j=1}^n (e_{nj}e_{jj}-e_{nj})+e_{nn}(I_{n-1}-n+1)+ \sum_{k=1}^{n-1} e_{nk}e_{kn}\\
& =  \sum_{j=1}^{n-1} (e_{nj}e_{jj}-e_{nj})+ y_n.
\endaligned $$
The second statement  follows from (b) in Theorem \ref{main-iso-th}.
\end{proof}
%
% The next Lemma follows from a direct computation.
%\begin{lemma}Let $z:=2y_n+\sum_{k,l=1}^{n-1} e_{kl}e_{lk}+I_{n-1}I_{n-1}-nI_{n-1}$.
%Then $$z=I_n^2
%-(n+1)I_{n}+\sum_{k=1}^{n}\sum_{l=1}^{n} e_{kl}e_{lk}.$$
%So $z$ is an central element in $\sigma\tau(W(\mathfrak{sl}_{n+1}, e))$.
%\end{lemma}

%\begin{proof}
%
%(2) It follows from a direct computation.
%$$\aligned \ & 2y_n+\sum_{k,l=1}^{n-1} e_{kl}e_{lk}\\
%=&2e_{nn}(I_{n}-n)
%+2\sum_{k=1}^{n-1} e_{nk}e_{kn}
%+\sum_{k,l=1}^{n-1} e_{kl}e_{lk}
%\\
%=&2e_{nn}(I_{n}-n)+\sum_{k=1}^{n-1} e_{nk}e_{kn}
%+\sum_{k=1}^{n-1} [e_{nk},e_{kn}]
%+\sum_{k=1}^{n-1}e_{kn} e_{nk}
%+\sum_{k,l=1}^{n-1} e_{kl}e_{lk}
%\\
%=&2e_{nn}(I_{n}-n)
%+\sum_{k=1}^{n-1} e_{nk}e_{kn}
%+(n-1)e_{nn}
%-\sum_{k=1}^{n-1}e_{kk}
%+\sum_{k=1}^{n-1}e_{kn} e_{nk}
%+\sum_{k,l=1}^{n-1} e_{kl}e_{lk}
%\\
%=&2e_{nn}(I_{n}-n)
%+\sum_{k=1}^{n-1} e_{nk}e_{kn}
%+(n-1)e_{nn}
%-I_{n-1}
%+\sum_{k=1}^{n-1}\sum_{l=1}^{n} e_{kl}e_{lk}
%\\
%=&2e_{nn}(I_{n}-n)
%+(n-1)e_{nn}
%-I_{n-1}-e_{nn}e_{nn}
%+\sum_{k=1}^{n}\sum_{l=1}^{n} e_{kl}e_{lk}
%\\
%=&e_{nn}(2I_{n}-2n+n-1-e_{nn})
%-I_{n-1}
%+\sum_{k=1}^{n}\sum_{l=1}^{n} e_{kl}e_{lk}
%\\
%=&e_{nn}(I_{n}+I_{n-1}-n-1)
%-I_{n-1}
%+C
%\endaligned$$
%
%
%$$\aligned \ & 2y_n+\sum_{k,l=1}^{n-1} e_{kl}e_{lk}+I_{n-1}I_{n-1}-nI_{n-1}\\
%&= e_{nn}(I_{n}+I_{n-1}-n-1)
%-I_{n-1}+I_{n-1}I_{n-1}-nI_{n-1}\\
%&= e_{nn}(I_{n}+I_{n-1}-n-1)
%-(n+1)I_{n-1}+I_{n-1}I_{n-1}\\
%&= e_{nn}(I_{n}+I_{n-1})
%-(n+1)I_{n}+I_{n-1}I_{n-1}\\
%&= e_{nn}I_{n}
%-(n+1)I_{n}+I_{n}I_{n-1}\\
%&= (I_n-I_{n-1})I_{n}
%-(n+1)I_{n}+I_{n}I_{n-1}\\
%&= I_n^2
%-(n+1)I_{n}+\sum_{k=1}^{n}\sum_{l=1}^{n} e_{kl}e_{lk}=z
%\endaligned$$
%
%\end{proof}

The next lemma shows that
the structure of  $\sigma\tau(W)$ is very similar to that of $U(\gl_n)$.
The  subspace $\mathrm{Span}\{y_1,\dots,y_{n-1},y_n\}$
is like the $n$-th column of $\gl_n$.

\begin{lemma}\label{lemma-3.3} For $1\leq i,j, k\leq n-1$, we have
\begin{enumerate}
\item  $[e_{ij},y_k]=\delta_{jk}y_i$, $[y_i,y_k]=0$, $[e_{ij},y_n]=0$.
    % and
%\[
% [y_i,y_n]=y_i(I_n-n)+\sum_{l=1}^{n-1}e_{il}y_l-e_{nn}y_i.
%\]
\item $[e_{nj},y_k]=\delta_{jk}y_n-e_{kj}(I_{n-1}-n)
-\sum_{l=1}^{n-1} e_{kl}e_{lj}$.
\item  $[e_{nj},y_n]
=-e_{nj}(I_{n-1}-n)-\sum_{l=1}^{n-1} e_{nl}e_{lj}$.
\end{enumerate}
\end{lemma}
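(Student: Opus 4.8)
The statement to prove is Lemma \ref{lemma-3.3}, which asserts three families of commutator identities among the generators $e_{ij}, e_{ni}$ ($1\le i,j\le n-1$) and $y_k$ ($1\le k\le n$) of $\sigma\tau(W)$.

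\textbf{Overall approach.} The plan is to treat all of these as direct computations inside $U(\gl_n)$, using only the definition $y_k = e_{kn}(I_n-n) + \sum_{j=1}^{n-1} e_{kj}e_{jn}$ from Lemma \ref{w-algebra-3.2}, the standard commutation relations $[e_{ab},e_{cd}] = \delta_{bc}e_{ad} - \delta_{da}e_{cb}$ in $\gl_n$, and the Leibniz rule $[z, uv] = [z,u]v + u[z,v]$. Since every bracket in the statement has one argument that is either a matrix unit $e_{ij}$ with $1\le i,j\le n-1$ (hence commuting with $I_n$ up to the obvious correction) or $e_{nj}$, and the other argument is one of the quadratic elements $y_k$, the computations are all of bounded length. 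I would organize the proof by the three items, and within item (1) by the three sub-identities.

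\textbf{Step-by-step.} First, for item (1): to compute $[e_{ij}, y_k]$ with $1\le i,j,k\le n-1$, note $[e_{ij}, I_n] = 0$, so $[e_{ij}, e_{kn}(I_n-n)] = \delta_{jk}e_{in}(I_n-n)$; and $[e_{ij}, \sum_l e_{kl}e_{ln}] = \sum_l \big(\delta_{jk}e_{il}e_{ln} - \delta_{li}e_{kj}e_{ln} + \delta_{jk}\cdot 0\big)$ — being careful that the term $-\delta_{li}e_{kj}e_{ln}$ and a term $+\delta_{jl}e_{kl}e_{in}$ (from the second factor) combine: since $i,l$ range over $1,\dots,n-1$, the cross terms telescope and what survives is exactly $\delta_{jk}\sum_l e_{il}e_{ln}$, giving $[e_{ij},y_k] = \delta_{jk}y_i$. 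The identity $[y_i,y_k]=0$ then follows either by a direct quadratic computation, or more cleanly by noting that $[y_i, -]$ acts as a derivation and using the relation $[e_{ab}, y_k]=\delta_{kb}y_a$ just established (for $1\le a,b\le n-1$) together with $[e_{in},\text{stuff}]$ bookkeeping — I would do it directly since it is short. The identity $[e_{ij}, y_n]=0$ is immediate because $y_n$ involves only $e_{nl}e_{ln}$ and $e_{nn}$, all of which commute with $e_{ij}$ for $1\le i,j\le n-1$ (here one checks $[e_{ij}, e_{nn}]=0$ and $[e_{ij}, e_{nl}e_{ln}] = \delta_{jl}e_{nl}\cdot 0 + \dots = 0$ since the indices $n$ never match $i,j$).

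\textbf{Items (2) and (3) and the main obstacle.} For item (2), $[e_{nj}, y_k]$ with $1\le j,k\le n-1$: the term $[e_{nj}, e_{kn}(I_n-n)]$ produces $\delta_{jk}e_{nn}(I_n-n) - e_{kj}(I_n-n)$ plus a correction from $[e_{nj}, I_n]$, namely $e_{kn}\cdot(-e_{nj} + \dots)$; and $[e_{nj}, \sum_l e_{kl}e_{ln}]$ produces $\delta_{jk}\sum_l e_{nl}e_{ln} - \sum_l e_{kl}e_{lj}$ plus cross terms. The point is that the terms proportional to $\delta_{jk}$ should recombine into $\delta_{jk}y_n$, and the remaining terms into $-e_{kj}(I_{n-1}-n) - \sum_{l=1}^{n-1} e_{kl}e_{lj}$; the subtlety is tracking the difference between $I_n$ and $I_{n-1} = I_n - e_{nn}$ and making sure the $e_{nn}$ pieces land in the right place. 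Item (3) is the analogous but simpler computation $[e_{nj}, y_n]$ with $y_n = e_{nn}(I_n-n) + \sum_l e_{nl}e_{ln}$. The main obstacle — really the only thing requiring care — is the bookkeeping of these index contractions and of the $I_n$ versus $I_{n-1}$ shift; there is no conceptual difficulty, so I would present the key intermediate expressions for items (2) and (3) and leave the purely mechanical recombination to the reader, exactly in the style of the preceding lemmas.
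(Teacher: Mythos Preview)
Your approach is correct and matches the paper's: both proceed by direct computation in $U(\gl_n)$ using the definition of $y_k$ from Lemma~\ref{w-algebra-3.2}, the standard bracket $[e_{ab},e_{cd}]=\delta_{bc}e_{ad}-\delta_{da}e_{cb}$, and the Leibniz rule, with the paper writing out only item~(2) explicitly and leaving (1) and (3) to ``similar calculations.'' One small slip in your sketch of item~(2): $I_n$ is central in $\gl_n$, so $[e_{nj},I_n]=0$ and there is no correction term of the form $e_{kn}\cdot(-e_{nj}+\cdots)$; the passage from $I_n$ to $I_{n-1}$ comes instead from absorbing an $e_{kj}e_{nn}$ term (arising in $\sum_l e_{kl}[e_{nj},e_{ln}]$) into $-e_{kj}(I_n-n)$.
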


\begin{proof}

 We check (2) and (3). Explicitly
$$\aligned \ [e_{nj},y_k]
=&[e_{nj},e_{kn}(I_{n}-n)
+\sum_{l=1}^{n-1} e_{kl}e_{ln}]\\
=& \delta_{jk}e_{nn}(I_n-n)-e_{kj}(I_n-n)
+\sum_{l=1}^{n-1} \delta_{jk}e_{nl}e_{ln}
+ e_{kj}e_{nn}
-\sum_{l=1}^{n-1} e_{kl}e_{lj}\\
=& \delta_{jk}y_n-e_{kj}(I_n-n)
+ e_{kj}e_{nn}
-\sum_{l=1}^{n-1} e_{kl}e_{lj}\\
=& \delta_{jk}y_n-e_{kj}(I_{n-1}-n)
-\sum_{l=1}^{n-1} e_{kl}e_{lj},
\endaligned$$
and
$$\aligned \  [y_n,e_{nk}]
=&[e_{nn}(I_{n}-n)
+\sum_{l=1}^{n-1} e_{nl}e_{ln},e_{nk}]\\
=& e_{nk}(I_{n}-n)
-e_{nk}e_{nn}
+\sum_{l=1}^{n-1} e_{nl}e_{lk}\\
=& e_{nk}(I_{n-1}-n)+\sum_{l=1}^{n-1} e_{nl}e_{lk}.
\endaligned$$

(1) also follows from direct calculations.
\end{proof}

%$$\aligned \ & [e_{nj},2y_n+\sum_{k,l=1}^{n-1} %e_{kl}e_{lk}+I_{n-1}I_{n-1}-nI_{n-1}]\\
%=& -2e_{nj}I_{n-1}
%+e_{nj}+ne_{nj}
%+e_{nj}I_{n-1}+I_{n-1}e_{nj}-ne_{nj}\\
%=& -2e_{nj}I_{n-1}
%+e_{nj}+ne_{nj}
%+2e_{nj}I_{n-1}+[I_{n-1},e_{nj}]-ne_{nj}\\
%=& -2e_{nj}I_{n-1}
%+e_{nj}+ne_{nj}
%+2e_{nj}I_{n-1}-e_{nj}-ne_{nj}\\
%=&0
%\endaligned$$

%$$  y_n=e_{nn}(I_{n}-n)+ \sum_{k=1}^{n-1} e_{nk}e_{kn}. $$
%
%$$\aligned \ &
%[-\sum_{l=1}^{n-1} e_{kl}e_{lk}, e_{nk}]
%=-\sum_{l=1}^{n-1} [e_{kl},e_{nk}]e_{lk}
%-\sum_{l=1}^{n-1} e_{kl}[e_{lk},e_{nk}]\\
% =& \sum_{l=1}^{n-1}e_{nl}e_{lk}
% +e_{kk}e_{nk}
%\endaligned $$
%
%
%
%
%$$\aligned \ &
%[y_n-e_{kk}(I_{n-1}-n), e_{nk}]\\
%=& e_{nk}(I_{n-1}-n)+\sum_{l=1}^{n-1} e_{nl}e_{lk}
%+e_{nk}(I_{n-1}-n)+e_{kk}e_{nk}
%\endaligned $$
%
%$$\aligned \ &
%[y_n-e_{kk}(I_{n-1}-n)-\sum_{l=1}^{n-1} e_{kl}e_{lk},, %e_{nk}]\\
%=& 2e_{nk}(I_{n-1}-n)+2\sum_{l=1}^{n-1} e_{nl}e_{lk}
%+2e_{kk}e_{nk}
%\endaligned $$
%
%
%
%$$\aligned & (y_n-e_{kk}(I_{n-1}-n)
%-\sum_{l=1}^{n-1} e_{kl}e_{lk})e_{nk}^{m-i-1}\\
%=& e_{nk}^{m-i-1}(y_n+(m-i-1)I_{n-1}-(m-i-1)n
%-(\lambda_k-m+i+1)(I_{n-1}-n))
%\endaligned $$
%
%$$\aligned y_ke_{nk}^m v_0=& [y_k,e_{nk}^m]
%=\sum_{i=0}^{m-1} e_{nk}^i[y_k,e_{nk}]e_{nk}^{m-i-1}
%\endaligned $$
%
%
%$(y_n-e_{kk}(I_{n-1}-n)
%-\sum_{l\geq k}^{n-1} e_{kl}e_{lk})v^+=\la_n(|\la|-n)
%+\la_k(\la_1+\dots+\la_{n-1}-n)
%-((n-k)e_{kk}-\sum_{l\geq k}^{n-1}e_{ll})$

\begin{example}If $n=2$, $\sigma\tau(W)$ is generated by
$e_{11},y_1,y_2,e_{21}$ subject to the following relation:
\begin{enumerate}
\item  $[e_{11},y_1]=y_1$, $[y_1,e_{21}]=2e_{11}(e_{11}-1)-y_2
$, $[e_{11},e_{21}]=-e_{21}$.
\item $[e_{11},y_2]=0$, $[y_2,e_{21}]
=2e_{21}(e_{11}-1)$
and $C=y_2+e_{11}^2-e_{11}$ is a central element.
\end{enumerate}
In this case, $W$ is isomorphic to the algebra $W_3^{(2)}$ introduced in
\cite{Tj}.
\end{example}

By Theorem \ref{main-iso-th},  Lemmas \ref{w-algebra-3.2} and \ref{lemma-3.3}, the algebra $\sigma\tau(W)$ has the following  triangular decomposition.

\begin{proposition}
$$\sigma\tau(W)=\sigma\tau(W)^-\otimes \sigma\tau(W)^0\otimes \sigma\tau(W)^+,$$where $\sigma\tau(W)^+$ is generated by $y_1,\dots,y_{n-1}, e_{ij}, 1\leq i<j \leq n-1$, $\sigma\tau(W)^0$ is generated by $y_n, e_{ii}, 1\leq i\leq n-1$,
and $\sigma\tau(W)^-$ is generated by $e_{n1},\dots,e_{n,n-1}, e_{ij}, 1\leq j<i \leq n-1$.
\end{proposition}

Similar to $\gl_n$, for $\l_1,\dots,\l_{n-1},\eta_n\in \C$, we  define
the Verma module
\[
 M_{\sigma\tau(W)}(\l_1,\dots,\l_{n-1},\eta_n)
 :=\sigma\tau(W)/J(\l,\eta_n),
\]
where $J(\l,\eta_n)$ is the left ideal generated by
$\sigma\tau(W)^+$, $y_n-\eta_n$, and $e_{ii}-\l_i$
for $1\leq i\leq n-1$. The triangular decomposition implies that this
module has a unique simple quotient, denoted by
$L_{\sigma\tau(W)}(\l_1,\dots,\l_{n-1},\eta_n)$. More generally, a
quotient of $M_{\sigma\tau(W)}(\l_1,\dots,\l_{n-1},\eta_n)$ is called a
highest weight module of highest weight $(\l_1,\dots,\l_{n-1},\eta_n)$.
\

The following lemma will be used in the proof of Proposition \ref{finite-condition}.

\begin{lemma}\label{key-lemma}Let $M$ be a highest weight module over $\sigma\tau(W)$ of the  highest weight $(\l_1,\dots,\l_{n-1},\eta_n)$ and $v^+$ one of its  highest weight vectors. Then for any $k\in \Z_{\geq 1}$, and $s\in\{1,\dots,n-1\}$, we have
$$\aligned \  y_{s}e_{ns}^{k}v^+
&=ke_{ns}^{k-1}\Big( (e_{ss}+1-k)^2+(e_{ss}+1-k)(I_{n-1}-n)\Big)v^+\\
&\ \ +ke_{ns}^{k-1}\Big(\sum_{l=1\atop l\neq s}^{n-1} e_{sl}e_{ls}-y_n\Big)v^+
 -k(k-1)\sum_{l=1\atop l\neq s}^{n-1}e_{ns}^{k-2}e_{nl}e_{ls}v^+.
\endaligned $$
\end{lemma}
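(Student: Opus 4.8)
The statement is an explicit identity for how $y_s$ acts after pulling it through a power of the ``lowering operator'' $e_{ns}$ applied to a highest weight vector. The natural strategy is induction on $k$, using the commutation relations collected in Lemma \ref{lemma-3.3} together with the defining property $\sigma\tau(W)^+ v^+ = 0$ (so in particular $y_s v^+ = 0$ for $s \le n-1$, $e_{ij} v^+ = 0$ for $i<j\le n-1$, and $y_n v^+ = \eta_n v^+$, $e_{ii}v^+ = \l_i v^+$). First I would record the base case $k=1$: compute $y_s e_{ns} v^+ = [y_s, e_{ns}] v^+$ since $y_s v^+ = 0$, and by Lemma \ref{lemma-3.3}(2) (with $j=k=s$) this is $\bigl(e_{ss}(I_{n-1}-n) + \sum_{l\neq s} e_{sl}e_{ls} - y_n + e_{ss}^2\bigr)v^+$ after reorganizing the $l=s$ term; one checks this matches the right-hand side at $k=1$ (the last sum vanishes since $k-2 < 0$, and $e_{ss}+1-k = e_{ss}$).

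For the inductive step I would write $y_s e_{ns}^k v^+ = [y_s, e_{ns}] e_{ns}^{k-1} v^+ + e_{ns} \, y_s e_{ns}^{k-1} v^+$, apply the induction hypothesis to the second term, and handle the first term by commuting $[y_s, e_{ns}] = \delta_{ss} y_n - e_{ss}(I_{n-1}-n) - \sum_{l} e_{sl}e_{ls}$ (from Lemma \ref{lemma-3.3}(2)) past $e_{ns}^{k-1}$. The key auxiliary computations are: $[e_{ss}, e_{ns}^{k-1}] = 0$ — wait, one must be careful, $e_{ss}$ and $e_{ns}$ do \emph{not} commute in $\gl_n$; rather $[e_{ss}, e_{ns}] = -e_{ns}$, so $e_{ss} e_{ns}^{k-1} = e_{ns}^{k-1}(e_{ss} - (k-1))$, and similarly for $I_{n-1}$ one uses $[e_{jj}, e_{ns}]$ for $j\le n-1$, giving $I_{n-1} e_{ns}^{k-1} = e_{ns}^{k-1}(I_{n-1} - (k-1))$ since only the $j=s$ summand contributes. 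One also needs how $\sum_{l\neq s} e_{sl}e_{ls}$ and the cross term $e_{sl}e_{ls}$ interact with $e_{ns}^{k-1}$: here $[e_{ls}, e_{ns}] = 0$ for $l \le n-1$ and $[e_{sl}, e_{ns}] = 0$ as well (wait: $[e_{sl}, e_{ns}] = \delta_{ls} e_{ns} - \delta_{ns}e_{sl}$; since $l \le n-1$, $\delta_{ls}$ can be $1$ only when $l=s$, excluded, and $\delta_{ns}=0$, so indeed it vanishes), so these terms commute past $e_{ns}^{k-1}$ cleanly and produce the $-k(k-1)\sum_{l\neq s} e_{ns}^{k-2}e_{nl}e_{ls}$ contribution through a telescoping count — actually the $e_{nl}$ here arises from $[e_{nl}, \cdot]$ type brackets, so I would track carefully where $e_{nl}e_{ls}$ (as opposed to $e_{sl}e_{ls}$) enters: it comes from commuting the $\sum_l e_{sl}e_{ls}$ piece, where $e_{sl} e_{ns}^{k-1}$ generates an $e_{nl}$ term via $[e_{sl}, e_{ns}^{k-1}]$ when... hmm, $[e_{sl},e_{ns}]=0$ as just noted, so that cannot be the source.

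On reflection, the $e_{nl}e_{ls}$ term must come from the $y_n$ piece or from iterating: $[y_n, e_{ns}]$ is governed by Lemma \ref{lemma-3.3}(3) with $j=s$, namely $[e_{ns}, y_n] = -e_{ns}(I_{n-1}-n) - \sum_l e_{nl}e_{ls}$, hence $[y_n, e_{ns}] = e_{ns}(I_{n-1}-n) + \sum_l e_{nl}e_{ls}$ — that is precisely where the $e_{nl}e_{ls}$ appears. So in the inductive step, when I push the $-y_n$ from the induction hypothesis's bracket term through one more $e_{ns}$, I pick up $[e_{ns}, y_n] = -e_{ns}(I_{n-1}-n) - \sum_l e_{nl}e_{ls}$, and collecting coefficients over the $k$ iterations produces the stated $-k(k-1)$ binomial factor. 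The bookkeeping of these coefficients — verifying that the three families of terms (the $(e_{ss}+1-k)$-quadratic part, the $(\sum_{l\neq s}e_{sl}e_{ls} - y_n)$ part, and the $e_{nl}e_{ls}$ part) assemble with exactly the multiplicities $k$, $k$, and $k(k-1)$ — is the main obstacle; everything else is mechanical application of $\gl_n$ commutators and the highest-weight annihilation conditions. I would organize the induction so that at each stage the expression is kept in the normal form ``$e_{ns}^{\bullet}\cdot(\text{Cartan-type and }e_{sl}e_{ls}\text{ terms})$'' applied to $v^+$, which makes the coefficient tracking transparent.
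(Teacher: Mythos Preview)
Your overall strategy---induction on $k$ using the commutation relations from Lemma~\ref{lemma-3.3} and the highest-weight annihilation conditions---is sound and is essentially an inductive repackaging of the paper's argument. The paper does not induct but instead expands directly as
\[
y_s e_{ns}^k v^+ \;=\; \sum_{i=0}^{k-1} e_{ns}^{\,i}\,[y_s,e_{ns}]\,e_{ns}^{\,k-i-1} v^+,
\]
computes the auxiliary quantities $y_n e_{ns}^{k-i-1}v^+$ and $\sum_l e_{sl}e_{ls}\,e_{ns}^{k-i-1}v^+$ separately, and then sums over $i$. Your induction is the recursive form of the same telescoping identity, so the two approaches are equivalent in spirit.

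There is, however, a concrete commutator error in your sketch that would derail the coefficient tracking. You assert $[e_{sl},e_{ns}]=0$ for $l\neq s$, $l\le n-1$, but in fact
\[
[e_{sl},e_{ns}] \;=\; \delta_{ln}e_{ss} - \delta_{ss}e_{nl} \;=\; -\,e_{nl}.
\]
Consequently the cross terms $e_{nl}e_{ls}$ arise from \emph{two} sources, not one: from commuting $y_n$ past $e_{ns}$ (as you correctly identified via Lemma~\ref{lemma-3.3}(3)) \emph{and} from commuting $e_{sl}$ past powers of $e_{ns}$ inside the $\sum_{l\neq s} e_{sl}e_{ls}$ piece. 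In the paper's computation both contributions appear explicitly (with opposite signs at the intermediate stage), and only after combining them and summing over $i$ does the clean coefficient $-k(k-1)$ emerge. If you carry out your induction with the incorrect commutator, the bookkeeping you flag as ``the main obstacle'' will not close. Once this is corrected, your inductive argument goes through and matches the paper's result.
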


\begin{proof}
 We can compute that
 $$\aligned \
y_n e_{ns}^{k-i-1}v^+
%& =\sum_{j=0}^{k-i-2}e_{ns}^j[ y_n ,e_{ns}]e_{ns}^{k-i-2-j}v_\l+e_{ns}^{k-i-1}y_nv^+,\\
& =\sum_{j=0}^{k-i-2}e_{ns}^j
\Big(e_{ns}(I_{n-1}-n)+\sum_{l=1}^{n-1} e_{nl}e_{ls}\Big)e_{ns}^{k-i-2-j}v^++e_{ns}^{k-i-1}y_nv^+\\
%&= \sum_{j=0}^{k-i-2}e_{ns}^{k-i-1}(I_{n-1}-n+i+2+j-k)
%+\sum_{j=0}^{k-i-2}e_{ns}^{k-i-1}(e_{ss}+i+2+j-k)\\
%&\ \ + (k-i-1)e_{ns}^{k-i-2}\sum_{l\neq s}^{n-1} e_{nl}e_{ls}v^++e_{ns}^{k-i-1}y_nv^+\\
%&= &(k-i-1)e_{ns}^{k-i-1}(I_{n-1}-n+i+2-k+(k-i-2)+e_{ss}+i+2-k)\\
%&\ + (k-i-1)e_{ns}^{k-i-2}\sum_{l\neq s}^{n-1} e_{nl}e_{ls}v_\l+e_{ns}^{k-i-1}\l_n(|\l|-n)v_\l\\
%&= e_{ns}^{k-i-1}(k-i-1)(I_{n-1}-n+e_{ss}+i+2-k) + (k-i-1)e_{ns}^{k-i-2}\sum_{l\neq s}^{n-1} e_{nl}e_{ls}v_\l+e_{ns}^{k-i-1}\l_n(|\l|-n)v_\l\\
&= e_{ns}^{k-i-1}\Big(-i^2-i(I_{n-1}-n+e_{ss}+3-2k)+(k-1)(I_{n-1}-n+e_{ss}+2-k)
\Big)v^+\\
&\ \ + \sum_{l\neq s}^{n-1} (k-i-1)e_{ns}^{k-i-2}e_{nl}e_{ls}v^+
+e_{ns}^{k-i-1}y_nv^+,
\endaligned $$
and
$$\aligned
\  \sum_{l=1}^{n-1} e_{sl}e_{ls}e_{ns}^{k-i-1}v^+
& = e_{ss}e_{ss}e_{ns}^{k-i-1}v^+
+\sum_{l\neq s}^{n-1} e_{sl}e_{ls}e_{ns}^{k-i-1}v^+\\
& = e_{ns}^{k-i-1}(e_{ss}+i+1-k)^2v^+
-\sum_{l\neq s}^{n-1}(k-i-1) e_{ns}^{k-i-2}e_{nl}e_{ls}v^+\\
&\ \ +\sum_{l\neq s}^{n-1} e_{ns}^{k-i-1}e_{sl}e_{ls}v^+\\
& = e_{ns}^{k-i-1}\Big(i^2+2i(e_{ss}+1-k)+(e_{ss}+1-k)^2\Big)v^+\\
&\  -\sum_{l\neq s}^{n-1}(k-i-1) e_{ns}^{k-i-2}e_{nl}e_{ls}v^+
 +\sum_{l\neq s}^{n-1} e_{ns}^{k-i-1}e_{sl}e_{ls}v^+.
\endaligned $$
Then
$$\aligned \  y_{s}e_{ns}^{k}v^+
&=\sum_{i=0}^{k-1}e_{ns}^{i}
\Big(e_{ss}(I_{n-1}-n)
+\sum_{l=1}^{n-1} e_{sl}e_{ls}-y_n\Big)e_{ns}^{k-i-1}v^+\\
%&=\sum_{i=0}^{k-1}e_{ns}^{k-1}(e_{ss}+i+1-k)(I_{n-1}-n+i+1-k)
%+\sum_{i=0}^{k-1}e_{ns}^{i}
%\Big(\sum_{l=1}^{n-1} e_{sl}e_{ls}-y_n\Big)e_{ns}^{k-i-1}v_\l\\
&=\sum_{i=0}^{k-1}e_{ns}^{k-1}\Big(i^2+i(e_{ss}+2-2k+I_{n-1}-n)
+(e_{ss}+1-k)(I_{n-1}-n+1-k)\Big)v^+\\
&\ \ +\sum_{i=0}^{k-1}e_{ns}^{k-1}\Big(i^2
+2i(e_{ss}+1-k)+(e_{ss}+1-k)^2\Big)v^+
+\sum_{l\neq s}^{n-1} k e_{ns}^{k-1}e_{sl}e_{ls}v^+\\\\
&\ \ +\sum_{i=0}^{k-1}e_{ns}^{k-1}\Big(i^2+i(I_{n-1}-n+e_{ss}+3-2k)
-(k-1)(I_{n-1}-n+e_{ss}+2-k)
\Big)v^+\\
&\ \ -k(k-1)\sum_{l\neq s}^{n-1}e_{ns}^{k-2}e_{nl}e_{ls}v_\l-ke_{ns}^{k-1}y_nv^+\\
%&=\sum_{i=0}^{k-1}e_{ns}^{k-1}\Big(3i^2
%+i(4e_{ss}+7-6k+2I_{n-1}-2n)
%+(e_{ss}+1-k)(I_{n-1}+e_{ss}-n+2-2k)\\
%&\  -(k-1)(I_{n-1}-n+e_{ss}+2-k)\Big)
%+\sum_{l>s}^{n-1}ke_{ns}^{k-1}(\l_l-\l_n-k+1)e_{ns}^{k-1}v_\l
%+k\sum_{l>s}^{n-1}ke_{ns}^{k-1}(e_{ss}-e_{ll})v_\l\\
%&\ -ke_{ns}^{k-1}\l_n(|\l|-n)v_\l\\
%&=e_{ns}^{k-1}\Big(\frac{1}{2}(k-1)k(2k-1)
%+\frac{1}{2}k(k-1)(4e_{ss}+7-6k+2I_{n-1}-2n)
%+k(e_{ss}+1-k)(I_{n-1}+e_{ss}-n+2-2k)\\
%&\
%-k(k-1)(I_{n-1}-n+e_{ss}+2-k)
%\Big)
%+k\sum_{l>s}^{n-1}ke_{ns}^{k-1}(e_{ss}-e_{ll})v_\l
%+\sum_{l>s}^{n-1}ke_{ns}^{k-1}(\l_l-\l_n-k+1)e_{ns}^{k-1}v_\l
%\\
%&\
%-ke_{ns}^{k-1}\l_n(|\l|-n)v_\l\\
%&=ke_{ns}^{k-1}\Big(\frac{1}{2}(k-1)(2k-1)
%+\frac{1}{2}(k-1)(4e_{ss}+7-6k+2I_{n-1}-2n)
%+(e_{ss}+1-k)(I_{n-1}+e_{ss}-n+2-2k)\\
%&\  -(k-1)(I_{n-1}-n+e_{ss}+2-k)
%+\sum_{l>s}^{n-1}e_{ns}^{k-1}(e_{ss}-e_{ll})v_\l
%+\sum_{l>s}^{n-1}e_{ns}^{k-1}(\l_l-\l_n-k+1)e_{ns}^{k-1}v_\l\\
%&\
%-e_{ns}^{k-1}\l_n(|\l|-n)\Big)v_\l\\
&=ke_{ns}^{k-1}\Big( (e_{ss}+1-k)^2+(e_{ss}+1-k)(I_{n-1}-n)
+\sum_{l\neq s}^{n-1} e_{sl}e_{ls}-y_n\Big)v^+\\
&\ \ -k(k-1)\sum_{l\neq s}^{n-1}e_{ns}^{k-2}e_{nl}e_{ls}v^+.
\endaligned $$
\end{proof}

\subsection{ Finite-dimensional irreducible modules over $\sigma\tau(W)$}

 In this subsection, we will determine when $L_{\sigma\tau(W)}(\l_1,\dots,\l_{n-1},\eta_n)$ is finite-dimensional and the relation
between $L_{\sigma\tau(W)}(\l_1,\dots,\l_{n-1},\eta_n)$ and a finite-dimensional $\gl_n$-module.

\begin{proposition}\label{finite-condition}For $(\l_1,\dots,\l_{n-1},\eta_n)\in \C^{n}$, if the module $L_{\sigma\tau(W)}(\l_1,\dots,\l_{n-1},\eta_n)$ is finite-dimensional, then
\begin{equation}\label{dominant}
\l_i-\l_{i+1}\in \Z_{\geq 0}\qquad(1\leq i\leq n-2),
\end{equation}
and  there is  $k\in \Z_{\geq 1}$
such that
$$\eta_n=
(\l_{n-1}+1-k)(\l_1+\dots+\l_{n-1}-n+\l_{n-1}+1-k).$$
\end{proposition}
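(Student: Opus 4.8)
The plan is to use the triangular decomposition of $\sigma\tau(W)$ together with the Lie subalgebra $\gl_{n-1}=\mathrm{Span}\{e_{ij}\mid 1\le i,j\le n-1\}\subset\sigma\tau(W)$ (Lemma \ref{w-algebra-3.2}), whose standard triangular decomposition is compatible with that of $\sigma\tau(W)$: the strictly upper triangular part of $\gl_{n-1}$ lies in $\sigma\tau(W)^+$, the diagonal part in $\sigma\tau(W)^0$, and the strictly lower triangular part in $\sigma\tau(W)^-$. Write $L:=L_{\sigma\tau(W)}(\l_1,\dots,\l_{n-1},\eta_n)$ and let $v^+$ be a highest weight vector, so that $e_{ij}v^+=0$ for $1\le i<j\le n-1$, $y_sv^+=0$ for $1\le s\le n-1$, $e_{ii}v^+=\l_iv^+$ for $1\le i\le n-1$, and $y_nv^+=\eta_nv^+$.

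First I would prove (\ref{dominant}). Since $e_{ij}v^+=0$ for $1\le i<j\le n-1$, the vector $v^+$ is a $\gl_{n-1}$-highest weight vector of weight $(\l_1,\dots,\l_{n-1})$, so $U(\gl_{n-1})v^+$ is a quotient of the $\gl_{n-1}$-Verma module of that highest weight. It is finite-dimensional (being a subspace of $L$) and nonzero, hence its irreducible quotient is a nonzero finite-dimensional irreducible $\gl_{n-1}$-module; therefore $(\l_1,\dots,\l_{n-1})$ is $\gl_{n-1}$-dominant integral, i.e.\ $\l_1-\l_2,\dots,\l_{n-2}-\l_{n-1}\in\Z_{\ge 0}$.

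Next I would produce the required integer $k$. From $[e_{n-1,n-1},e_{n,n-1}]=-e_{n,n-1}$, the vectors $e_{n,n-1}^{j}v^+$ $(j\ge 0)$ lie in pairwise distinct $e_{n-1,n-1}$-weight spaces of $L$, so the finite-dimensionality of $L$ forces the existence of a smallest $k\in\Z_{\ge 1}$ with $e_{n,n-1}^{k}v^+=0$, while $e_{n,n-1}^{k-1}v^+\neq 0$. Now I apply Lemma \ref{key-lemma} with $s=n-1$. For $1\le l\le n-2$ the operator $e_{l,n-1}$ lies in $\sigma\tau(W)^+$, whence $e_{l,n-1}v^+=0$; consequently the sums $\sum_{l\neq s}e_{sl}e_{ls}v^+$ and $\sum_{l\neq s}e_{ns}^{k-2}e_{nl}e_{ls}v^+$ occurring in Lemma \ref{key-lemma} vanish. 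Substituting $e_{n-1,n-1}v^+=\l_{n-1}v^+$, $I_{n-1}v^+=(\l_1+\dots+\l_{n-1})v^+$ and $y_nv^+=\eta_nv^+$, the lemma yields
$$0=y_{n-1}e_{n,n-1}^{k}v^+=k\Big((\l_{n-1}+1-k)^2+(\l_1+\dots+\l_{n-1}-n)(\l_{n-1}+1-k)-\eta_n\Big)e_{n,n-1}^{k-1}v^+.$$
Since $k\ge 1$ and $e_{n,n-1}^{k-1}v^+\neq 0$, the scalar in front must vanish, which is precisely the asserted quadratic relation.

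The genuinely new ingredient here is Lemma \ref{key-lemma}; the rest is weight bookkeeping. The one point that deserves care is the existence of the minimal positive exponent $k$ annihilating $v^+$, which I expect to follow at once from $\dim L<\infty$ and the fact that $e_{n,n-1}$ lowers the $e_{n-1,n-1}$-eigenvalue by $1$; beyond this I do not foresee any real obstacle.
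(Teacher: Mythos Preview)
Your proof is correct and follows essentially the same route as the paper: both use the $\gl_{n-1}$-subalgebra to obtain the dominance condition, choose the minimal $k\ge 1$ with $e_{n,n-1}^{k}v^{+}=0$, and then apply Lemma~\ref{key-lemma} with $s=n-1$ (using $e_{l,n-1}v^{+}=0$ for $l<n-1$) to extract the quadratic relation from $y_{n-1}e_{n,n-1}^{k}v^{+}=0$. Your justification for the existence of $k$ via distinct $e_{n-1,n-1}$-weights is a slight elaboration of what the paper states, but the argument is the same.
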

\begin{proof}
By the theory of finite-dimensional modules over $\gl_{n-1}$,
(\ref{dominant}) holds. Since the subspace $\text{span}\{e_{n,n-1}^iv^+| i\in \Z_{\geq 0}\}$ of
$L_{\sigma\tau(W)}(\l_1,\dots,\l_{n-1},\eta_n)$  is finite-dimensional, we can choose some $k\in \Z_{\geq 1}$ such that
$e_{n,n-1}^kv^+=0$ but $e_{n,n-1}^{k-1}v^+\neq 0$. Next we determine the
action of $y_{n-1}$ on $e_{n,n-1}^kv^+$.

By Lemma \ref{key-lemma} and $e_{l,n-1}v^+=0$ for $l<n-1$,
we obtain that
$$\aligned &y_{n-1}e_{n,n-1}^kv^+\\
=&ke_{n,n-1}^{k-1}\Big( (e_{n-1,n-1}+1-k)^2+(e_{n-1,n-1}+1-k)(I_{n-1}-n)\\
&\ \ +\sum_{l=1}^{n-2} e_{n-1,l}e_{l,n-1}-y_n\Big)v^+
 -k(k-1)\sum_{l=1}^{n-2}e_{n,n-1}^{k-2}e_{n,l}e_{l,n-1}v^+\\
=&  ke_{n,n-1}^{k-1}\Big((e_{n-1,n-1}+1-k)^2+(I_{n-1}-n)(e_{n-1,n-1}+1-k)
-y_n\Big)v^+=0.
\endaligned$$
Therefore $(\l_{n-1}+1-k)^2+(\l_1+\dots+\l_{n-1}-n)(\l_{n-1}+1-k)
-\eta_n=0$.
\end{proof}

We view every  $\lambda=(\l_1,\dots,\l_{n})\in \C^n$
as a  linear map $\lambda$ on $(\oplus_{i=1}^n \C e_{ii})$
such that $\lambda(e_{ii})=\lambda_i$ for any $i$.
For $\l\in \C^{n}$, let $V(\l)$
be the irreducible highest weight module over $\gl_n$ with the highest weight $\lambda$. Let $\Lambda_n^+=\{\l\in \C^n\mid \l_i-\l_{i+1}\in \Z_{\geq 0}\}$.
It is well known that  $V(\l)$ is finite-dimensional if and only if $\l\in \Lambda_n^+$,
see \cite [Theorem 21.2]{H}.

Note that $\sigma\tau(W)\subset U(\gl_n)$, see Lemma \ref{w-algebra-3.2}. Restricted to $\sigma\tau(W)$, any finite-dimensional irreducible $\gl_n$-module has a finite-dimensional irreducible  $\sigma\tau(W)$-quotient module.
We will show that the converse statement is still true.

\begin{theorem}\label{finite-1}If $M$ is a finite-dimensional irreducible  $\sigma\tau(W)$-module, then there is a finite-dimensional irreducible $\gl_n$-module $V(\l)$ such that $M$ is isomorphic to an irreducible $\sigma\tau(W)$-quotient module
of  $V(\l)$.
\end{theorem}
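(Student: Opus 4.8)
The plan is to reduce everything to highest weight theory for $\sigma\tau(W)$ and then match the resulting highest weight with that of a finite-dimensional $\gl_n$-module. First I would observe that any finite-dimensional irreducible $\sigma\tau(W)$-module $M$ is, in particular, a finite-dimensional module over the solvable "Borel" subalgebra generated by $\sigma\tau(W)^+$ and $\sigma\tau(W)^0$; by Lie's theorem there is a common eigenvector $v^+\in M$ killed by $\sigma\tau(W)^+$, so $M\cong L_{\sigma\tau(W)}(\l_1,\dots,\l_{n-1},\eta_n)$ for some weight. Now apply Proposition \ref{finite-condition}: the numbers $\l_1-\l_2,\dots,\l_{n-2}-\l_{n-1}$ lie in $\Z_{\geq 0}$, and there is $k\in\Z_{\geq 1}$ with
$$(\l_{n-1}+1-k)^2+(\l_1+\dots+\l_{n-1}-n)(\l_{n-1}+1-k)-\eta_n=0.$$

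The key step is to choose $\l_n$ so that the $\gl_n$-weight $\l=(\l_1,\dots,\l_{n-1},\l_n)$ lies in $\Lambda_n^+$ and so that the finite-dimensional irreducible $V(\l)$, restricted to $\sigma\tau(W)$, has an irreducible $\sigma\tau(W)$-subquotient isomorphic to $M$. The natural guess is $\l_n=\l_{n-1}-k+1$, i.e. $k=\l_{n-1}-\l_n+1$; then $\l_{n-1}-\l_n=k-1\in\Z_{\geq 0}$, so combined with \eqref{dominant} we get $\l\in\Lambda_n^+$ and $V(\l)$ is finite-dimensional. With the highest weight vector $w^+$ of $V(\l)$, the vectors $e_{n,n-1}^i w^+$ are nonzero for $0\le i\le k-1$ and $e_{n,n-1}^k w^+=0$ by $\sl_2$-theory applied to the root $\alpha_{n-1}$ (since $\langle\l,\alpha_{n-1}^\vee\rangle=\l_{n-1}-\l_n=k-1$). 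I would then check that $w^+$ is a highest weight vector for $\sigma\tau(W)$: it is killed by $e_{ij}$, $1\le i<j\le n-1$ (upper-triangular in $\gl_{n-1}$), it is an eigenvector for $e_{ii}$, $i\le n-1$, and for $y_n$, and crucially it is killed by each $y_k=e_{kn}(I_n-n)+\sum_{j=1}^{n-1}e_{kj}e_{jn}$ for $1\le k\le n-1$, because every term raises the $\gl_n$-weight out of the range of $V(\l)$ (each monomial contains a factor $e_{jn}$ or $e_{kn}$ with $k<n$, which annihilates $w^+$). Hence $\sigma\tau(W)\cdot w^+$ is a highest weight module for $\sigma\tau(W)$, with highest weight $(\l_1,\dots,\l_{n-1},\eta_n')$ where $\eta_n'$ is the $y_n$-eigenvalue of $w^+$.

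It therefore remains to identify $\eta_n'$ and match it with $\eta_n$. Since $y_n=e_{nn}(I_{n-1}-n)+\sum_{l=1}^{n-1}e_{nl}e_{ln}$ acts on $w^+$ by $\l_n(\l_1+\dots+\l_{n-1}-n)$ (the off-diagonal terms contain $e_{ln}$, $l<n$, killing $w^+$), we get $\eta_n'=\l_n(\l_1+\dots+\l_{n-1}-n)$; substituting $\l_n=\l_{n-1}+1-k$ this is exactly $(\l_{n-1}+1-k)(\l_1+\dots+\l_{n-1}-n)$, which by the displayed relation from Proposition \ref{finite-condition} equals $\eta_n-(\l_{n-1}+1-k)^2$. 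So a priori $\eta_n'\ne\eta_n$, and a direct match fails; the fix is to use a \emph{different} generator of the $\gl_n$-submodule. The cleanest route: the $\sigma\tau(W)$-submodule of $V(\l)$ generated by $w^+$ is a highest weight module, so it has a simple top $L_{\sigma\tau(W)}(\l_1,\dots,\l_{n-1},\eta_n')$; I claim $v:=e_{n,n-1}^{k-1}w^+$ is a \emph{new} $\sigma\tau(W)$-highest weight vector, of weight $(\l_1,\dots,\l_{n-2},\l_{n-1}-k+1,\eta_n)$ with exactly the right $\eta_n$. Indeed $e_{ii}$ for $i<n-1$ fixes the weight; $e_{n-1,n-1}$ now has eigenvalue $\l_{n-1}-k+1$; the $y_n$-eigenvalue on $v$ is computed via Lemma \ref{lemma-3.3}(3) (commuting $y_n$ past $e_{n,n-1}$) and comes out to $(\l_{n-1}+1-k)^2+(\l_1+\dots+\l_{n-1}-n)(\l_{n-1}+1-k)=\eta_n$; and $v$ is killed by $\sigma\tau(W)^+$ by the same weight-range argument together with $e_{n,n-1}^k w^+=0$ and the relations of Lemma \ref{lemma-3.3}(1). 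Thus $\sigma\tau(W)\cdot v$ is a highest weight module with the \emph{same} highest weight as $M$, so its simple top is $L_{\sigma\tau(W)}(\l_1,\dots,\l_{n-1},\eta_n)\cong M$. Since $V(\l)$ is finite-dimensional and completely reducible as a $\gl_{n-1}$-module, any quotient of $\sigma\tau(W)\cdot v\subseteq V(\l)$ appears as a $\sigma\tau(W)$-subquotient, and in particular $M$ is an irreducible $\sigma\tau(W)$-quotient of $V(\l)$.

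The main obstacle I anticipate is precisely this eigenvalue bookkeeping: one must be careful that the "obvious" highest weight vector $w^+$ of $V(\l)$ does \emph{not} realize $M$, and that the correct vector is the extremal weight vector $e_{n,n-1}^{k-1}w^+$ inside the bottom $\gl_{n-1}\times\gl_1$-isotypic piece; getting the $y_n$-eigenvalue on this vector right (via repeated application of Lemma \ref{lemma-3.3}) and confirming $\sigma\tau(W)^+$-annihilation is the technical heart of the argument. Everything else — Lie's theorem to get a highest weight vector, dominance from Proposition \ref{finite-condition}, choosing $\l_n=\l_{n-1}+1-k$, and passing from "subquotient" to "quotient" using $\gl_{n-1}$-semisimplicity — is routine.
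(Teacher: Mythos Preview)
Your approach coincides with the paper's up to the eigenvalue computation, but there you misquote the definition of $y_n$: by Lemma~\ref{w-algebra-3.2} one has $y_k = e_{kn}(I_n-n)+\sum_{j=1}^{n-1}e_{kj}e_{jn}$ with $I_n$, not $I_{n-1}$. Hence on the $\gl_n$-highest weight vector $w^+$ of $V(\l)$ the terms containing $e_{jn}$ vanish and
\[
y_n\,w^+ \;=\; \l_n(|\l|-n)\,w^+ \;=\; \big(\l_n^2+\l_n(\l_1+\cdots+\l_{n-1}-n)\big)\,w^+.
\]
With $\l_n=\l_{n-1}+1-k$ this is exactly $\eta_n$, by the very identity supplied by Proposition~\ref{finite-condition} (that identity says precisely that $\l_{n-1}+1-k$ is a root of $x^2+x(\l_1+\cdots+\l_{n-1}-n)-\eta_n$). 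So $\eta_n'=\eta_n$ after all: $w^+$ is a $\sigma\tau(W)$-highest weight vector of the \emph{correct} weight $(\l_1,\dots,\l_{n-1},\eta_n)$, and since $\sigma\tau(W)^-$ already contains the full strictly lower-triangular part of $\gl_n$, the module $V(\l)$ is cyclic on $w^+$ over $\sigma\tau(W)$, hence is a highest weight $\sigma\tau(W)$-module with simple quotient $L_{\sigma\tau(W)}(\l_1,\dots,\l_{n-1},\eta_n)\cong M$. This is exactly what the paper does.

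Your detour through $v=e_{n,n-1}^{k-1}w^+$ is therefore unnecessary, and it is also incorrect on its own terms. You yourself compute the $e_{n-1,n-1}$-eigenvalue of $v$ to be $\l_{n-1}-k+1=\l_n$, so whatever highest weight $\sigma\tau(W)\cdot v$ carries, its $(n-1)$st coordinate is $\l_n$, not $\l_{n-1}$; your subsequent assertion that this is ``the same highest weight as $M$'' contradicts what you just wrote (unless $k=1$). In addition, $\sigma\tau(W)\cdot v$ is only a \emph{submodule} of $V(\l)$, so its simple top is a priori a subquotient of $V(\l)$, not a quotient; the appeal to $\gl_{n-1}$-semisimplicity does not turn a subquotient into a quotient. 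The fix is simply to correct the $I_{n-1}$ to $I_n$ and use $w^+$ directly.
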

\begin{proof}
We first explain  that $M$ is a highest weight module. The  action of
$I_{n-1}$ on $M$ has a finite number of eigenvalues.
Let $K\subset M$ be the eigenspace of $I_{n-1}$ corresponding to
eigenvalue with the maximal real part.
Since  $[I_{n-1},y_j]=y_j$, the maximality implies that $y_j K=0$
for all $j<n$.
 The relations in (1) of 
Lemma~\ref{lemma-3.3} show that $K$ is stable under $\gl_{n-1}$ and $y_n$. As a
finite-dimensional $\gl_{n-1}$-module, $K$ is completely reducible. Thus  $K$ must contain
a $\gl_{n-1}$-highest weight vector. By
fact that $[\gl_{n-1},y_n]=0$, we can choose a common eigenvector $v^+\in K$
of $e_{11},\dots,e_{n-1,n-1}, y_n$ such that $e_{ij}v^+=0$
for all $1\leq i<j<n$.
Therefore
\[
 M\cong L_{\sigma\tau(W)}(\l_1,\dots,\l_{n-1},\eta_n)
\]
for suitable $\l_1,\dots,\l_{n-1},\eta_n\in\C$.

By Proposition \ref{finite-condition},
$$\l_1-\l_2,\dots,\l_{n-2}-\l_{n-1}\in \Z_{\geq 0},$$
and there is  $k\in \Z_{\geq 1}$
such that
\begin{equation}\label{equation-l}\eta_n=
(\l_{n-1}+1-k)(\l_1+\dots+\l_{n-1}-n+\l_{n-1}+1-k).\end{equation}

Let $\l_n:=\l_{n-1}+1-k$. Then $\l_{n-1}-\l_n=k-1\in \Z_{\geq 0} $ and
$\eta_n=\l_n(\l_1+\dots+\l_{n-1}+\l_n-n)$.
This means that the  irreducible highest weight $\gl_n$-module $V(\l_1,\dots,\l_{n})$ is finite-dimensional. Restricted to $\sigma\tau(W)$,
the highest weight vector $v_\l$ of $V(\l_1,\dots,\l_{n})$ is annihilated by
$\sigma\tau(W)^+$, and
$$y_nv_{\l}=
(e_{nn}(I_{n}-n)
+\sum_{j=1}^{n-1} e_{nj}e_{jn} ) v_{\l}=
\eta_n v_{\l}.$$
Therefore $V(\l_1,\dots,\l_{n})$
 has a quotient module  isomorphic to $L_{\sigma\tau(W)}(\l_1,\dots,\l_{n-1},\eta_n)$.
\end{proof}

\begin{corollary}For $(\l_1,\dots,\l_{n-1},\eta_n)\in \C^{n}$,   $L_{\sigma\tau(W)}(\l_1,\dots,\l_{n-1},\eta_n)$ is finite-dimensional if and only if
$$\l_1-\l_2,\dots,\l_{n-2}-\l_{n-1}\in \Z_{\geq 0},$$
and  there is  $\l_n\in \C$
such that
$$
\l_{n-1}-\l_n\in \Z_{\geq 0}, \quad
\eta_n=
\l_n(\l_1+\dots+\l_{n-1}+\l_n-n).$$
\end{corollary}

For $\lambda=(\lambda_1,\ldots,\lambda_n)\in\Lambda_n^+$, let
$V(\lambda)$ be the finite-dimensional irreducible $U(\mathfrak{gl}_n)$-module
with highest weight  $\lambda$.
Denote $$I(\lambda)=\{\mu=(\mu_1,\dots\mu_{n-1})\in \C^{n-1}\mid\l_1\geq \mu_1\geq \l_2\geq \mu_2\geq\cdots \l_{n-1}\geq\mu_{n-1}\geq \l_n\},$$
where $a\geq b$ implies that $a-b\in \Z_{\geq 0}$.
If $\mu\in I(\lambda)$, we say that $\mu$
interlaces $\l$. Then by the branching rule \cite[Theorem 8.1.2]{GW}, we have the following multiplicity-free decomposition:
\begin{equation}\label{multiplicity-free}V(\l)=\oplus_{\mu\in I(\l)}V_{n-1}(\mu),\end{equation}
where $V_{n-1}(\mu)$ is the finite-dimensional irreducible
$\gl_{n-1}$-module of the highest weight $\mu$.

\begin{theorem}\label{sub-mod}
Let 
$V(\lambda)$ be the finite-dimensional irreducible $U(\mathfrak{gl}_n)$-module
with highest weight $\lambda\in\Lambda_n^+$. Suppose that there is
$s\in\{1,\ldots,n-1\}$ such that
\begin{equation}\label{red-condition}
 k:=\lambda_s-s+|\lambda|\in\mathbb Z_{\geq 1},
 \qquad
 s-|\lambda|-\lambda_{s+1}\in\mathbb Z_{\geq 0}.
\end{equation}
Set
\[
 \nu=(\lambda_1,\ldots,\lambda_{s-1},s-|\lambda|,
       \lambda_{s+1},\ldots,\lambda_{n-1}).
\]
Then $\nu$ interlaces $\lambda$. Let $v_\nu$ be a
$\mathfrak{gl}_{n-1}$-highest weight vector in the unique summand
$V_{n-1}(\nu)$ of the branching decomposition
(\ref{multiplicity-free}).
Then
\[
 \sigma\tau(W)^+v_\nu=0.
\]
Consequently, $v_\nu$ generates a proper $\sigma\tau(W)$-submodule of
$V(\lambda)$. In particular, the restriction of $V(\lambda)$ to
$\sigma\tau(W)$ is reducible.
\end{theorem}

\begin{proof}
By (\ref{red-condition}), we have
\[
 \lambda_s-\nu_s=k\in\mathbb Z_{\geq1},
 \qquad
 \nu_s-\lambda_{s+1}\in\mathbb Z_{\geq0}.
\]
 Thus $\nu$ interlaces $\lambda$. Hence the branching rule gives a unique
summand $V_{n-1}(\nu)$.

For $m\geq1$, put
\[
 \Omega_m:=\sum_{1\leq i,j\leq m}e_{ij}e_{ji},
 \qquad
 c_m(\alpha):=\sum_{j=1}^m
 \alpha_j\bigl(\alpha_j+m+1-2j\bigr).
\]
Thus $\Omega_m$ acts on the irreducible highest weight
$\mathfrak{gl}_m$-module of highest weight $\alpha$ by the scalar
$c_m(\alpha)$. Define
\[
 D:=y_n+\frac12e_{nn}(e_{nn}-1).
\]
Since $y_n$ and $e_{nn}$ commute with $\mathfrak{gl}_{n-1}$, so does $D$.
From
\[
 [e_{in},y_n]=y_i-e_{nn}e_{in},
 \qquad
 \left[e_{in},\frac12e_{nn}(e_{nn}-1)\right]=e_{nn}e_{in},
\]
it follows that
\[
 y_i=[e_{in},D],\qquad 1\leq i\leq n-1.
\]

Using
\[
 \sum_{j=1}^{n-1}e_{nj}e_{jn}
 =\frac12\bigl(\Omega_n-\Omega_{n-1}-e_{nn}^2-I_{n-1}
 +(n-1)e_{nn}\bigr),
\]
we obtain
\[
 D=\frac12(\Omega_n-\Omega_{n-1})
   +\left(I_n-\frac{n+1}{2}\right)e_{nn}-\frac12I_n.
\]
Therefore $D$ acts on the summand $V_{n-1}(\mu)\subset V(\lambda)$ by the scalar 
\[
 d_\lambda(\mu)
 =\frac12\bigl(c_n(\lambda)-c_{n-1}(\mu)\bigr)
 +\left(|\lambda|-\frac{n+1}{2}\right)(|\lambda|-|\mu|)
 -\frac12|\lambda|.
\]
Whenever $\mu+\mathbf e_i$ interlaces $\lambda$, one has
\[
 d_\lambda(\mu+\mathbf e_i)-d_\lambda(\mu)
 =i-|\lambda|-\mu_i. \tag{*}
\]

We first show inductively that $e_{in}v_\nu=0$ for $1\leq i<s$.
If $e_{in}v_\nu\neq0$ and $e_{an}v_\nu=0$ for all $a<i$, then
$e_{in}v_\nu$ is a $\mathfrak{gl}_{n-1}$-highest weight vector of weight
$\nu+\mathbf e_i$, because
\[
 [e_{ab},e_{in}]=\delta_{bi}e_{an}\qquad(a<b<n).
\]
But $\nu_i=\lambda_i$, so $\nu+\mathbf e_i$ does not interlace $\lambda$,
a contradiction. Hence $e_{in}v_\nu=0$ for all $i<s$. It follows that
$e_{sn}v_\nu$ is either zero or a $\mathfrak{gl}_{n-1}$-highest weight
vector of weight $\nu+\mathbf e_s$. In the latter case it lies in the
unique summand $V_{n-1}(\nu+\mathbf e_s)$. Thus, by $(*)$, we have
\[
 \begin{aligned}
 y_sv_\nu
 &= [e_{sn},D]v_\nu\\
 &=\bigl(d_\lambda(\nu)-d_\lambda(\nu+\mathbf e_s)\bigr)e_{sn}v_\nu\\
 &=\bigl(\nu_s+|\lambda|-s\bigr)e_{sn}v_\nu=0,
 \end{aligned}
\]
because $\nu_s=s-|\lambda|$.

We now prove that $y_iv_\nu=0$ for every $i\neq s$. For
$i=1,\ldots,s-1$, proceed by induction. If $y_av_\nu=0$ for all $a<i$,
then Lemma~3.3 gives that $y_iv_\nu$ is a
$\mathfrak{gl}_{n-1}$-highest weight vector of weight
$\nu+\mathbf e_i$. Since $\nu_i=\lambda_i$, this weight does not interlace
$\lambda$, so $y_iv_\nu=0$. Starting from $y_sv_\nu=0$, the same argument
applies successively to $i=s+1,\ldots,n-1$, because again
$\nu_i=\lambda_i$. Hence $y_iv_\nu=0$ for all $1\leq i\leq n-1$.
Together with $e_{ij}v_\nu=0$ for $1\leq i<j\leq n-1$, this proves
$\sigma\tau(W)^+v_\nu=0$.

Finally,
$\sigma\tau(W)v_\nu=\sigma\tau(W)^-\sigma\tau(W)^0v_\nu$. The generators
$e_{ni}$ decrease the $I_{n-1}$-weight by one, while the remaining
generators of $\sigma\tau(W)^-$ preserve it. Thus every vector in
$\sigma\tau(W)v_\nu$ has $I_{n-1}$-weight at most $|\nu|$. On the other
hand,
\[
 (\lambda_1+\cdots+\lambda_{n-1})-|\nu|
 =\lambda_s-s+|\lambda|=k>0.
\]
Therefore $v_\lambda\notin\sigma\tau(W)v_\nu$, and 
submodule $\sigma\tau(W)v_\nu$ is proper.
\end{proof}

%\begin{remark}
%In general, the vector $e_{ns}^k v_\lambda$ is not a
%$\mathfrak{gl}_{n-1}$-highest weight vector and need not generate a proper
%$\sigma\tau(W)$-submodule. It must be replaced by the canonical highest
%weight vector $v_\nu$ of the branching summand $V_{n-1}(\nu)$ (equivalently,
%by the corresponding Gelfand--Tsetlin/Mickelsson vector).
%\end{remark}

In fact, the condition (\ref{red-condition}) is the necessary and sufficient condition for the reducibility of  $V(\lambda)$ as a $\sigma\tau(W)$-module.
To show that  necessity, we need the  generalized central character of $V(\lambda)$ as a $\sigma\tau(W)$-module, see Lemmas \ref{centra-character}, \ref{centra-class} and Theorem \ref{irre-factor}.

Let $A_{n}= \C [x_1, \dots, x_{n}]$.
The subalgebra of $\text{End}_{\C}(A_{n})$  generated by $\{x_i,\frac{\partial}{\partial x_i}\mid 1\leq i\leq n\}  $ is the Weyl algebra $D_{n}$ of rank $n$. From the construction of Shen-Larsson module over  $W^+_n=\mathrm{Der}(A_n)$ and an embedding from
$\sl_{n+1}$ to $W_n^+$,  see \cite{Sh,LLZ}, we have the following algebra homomorphism.

\begin{proposition}\label{s-iso} For any  $n\in \Z_{\geq 2}$, there is an algebra homomorphism
$$\psi: U(\sl_{n+1}) \rightarrow D_{n}\otimes U(\gl_n)$$
defined by
\begin{equation}\label{3.1}\aligned
\ h_k& \mapsto x_k\frac{\partial}{\partial x_k}\otimes 1+1\otimes e_{kk},
\ e_{ij} \mapsto 1\otimes e_{ij}+x_i\frac{\partial}{\partial x_j}\otimes 1,\\
\ e_{0j}&\mapsto -\frac{\partial}{\partial x_j}\otimes 1,
\ e_{i0}\mapsto  \sum_{q=1}^nx_q\otimes e_{iq}+
x_i\sum_{q=1}^nx_q\frac{\partial}{\partial x_q}\otimes 1+ x_i\otimes I_n,
\endaligned \end{equation}   where $i,j,k \in \{1,\dots,n\}$ with $i\neq j$, and $I_n:=\sum_{i=1}^n e_{ii}$ is the identity matrix in $\gl_n$.
\end{proposition}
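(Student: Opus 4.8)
The statement to prove is Proposition \ref{s-iso}: that $\psi: U(\sl_{n+1}) \to \cd_n \otimes U(\gl_n)$ defined by the given formulas is an algebra homomorphism.

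The plan is to verify that $\psi$ respects all the defining commutation relations of $\sl_{n+1}$, i.e., that the images $\psi(h_k), \psi(e_{ij}), \psi(e_{0j}), \psi(e_{i0})$ satisfy the same brackets in $\cd_n \otimes U(\gl_n)$ as the standard generators $e_{ij}$ (with $0 \le i,j \le n$) do in $\gl_{n+1}$, restricted to the trace-zero part. Since $\sl_{n+1}$ is generated by these root vectors and the Cartan, and all relations follow from the matrix-unit relations $[e_{ab}, e_{cd}] = \delta_{bc} e_{ad} - \delta_{da} e_{cb}$, it suffices to check these bracket identities on the generators. A cleaner way to organize this: recall (from \cite{Sh,LLZ}) that there is a Lie algebra embedding $\sl_{n+1} \hookrightarrow W_n^+ = \mathrm{Der}(A_n)$ — the vector field realization — together with the Shen–Larsson construction which, given a $\gl_n$-module, produces a $W_n^+$-module structure on $A_n \otimes (\text{module})$; the map $\psi$ is exactly the algebra homomorphism $U(\sl_{n+1}) \to \cd_n \otimes U(\gl_n)$ underlying this tensor construction. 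So the real content is: (i) identify the vector fields $\psi(e_{0j}) = -\partial_j$, $\psi(h_k)$ and $\psi(e_{ij})$ as the degree $\le 0$ part and $\psi(e_{i0})$ as the degree $1$ part of the standard $\sl_{n+1} \subset W_n^+$ action twisted by the $\gl_n$-factor, and (ii) invoke that the Shen–Larsson formula $X \mapsto X \otimes 1 + (\text{Jacobian correction}) \otimes (\gl_n\text{-action})$ is a Lie algebra homomorphism out of $W_n^+$, hence out of $\sl_{n+1}$.

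Concretely, I would proceed as follows. First, verify the ``easy block'': the elements $\psi(h_k)$ and $\psi(e_{ij})$, $1 \le i \ne j \le n$, span a copy of $\gl_n$ — here one uses that $x_i \partial_j \otimes 1 + 1 \otimes e_{ij}$ is the sum of two commuting copies of $\gl_n$ (the polynomial-degree-preserving vector fields and the matrix part), so these relations are immediate. Second, check the relations among the $\psi(e_{0j}) = -\partial_j$: these commute, as required since $[e_{0i}, e_{0j}] = 0$. Third, check $[\psi(h_k), \psi(e_{0j})] = \psi([h_k, e_{0j}])$: since $h_k = e_{kk} - \frac{1}{n+1} I_{n+1}$, we need $[\psi(h_k), -\partial_j] = -\delta_{kj}(-\partial_j) \cdot (\text{weight})$... more precisely $[e_{kk}, e_{0j}] = -\delta_{kj} e_{0j}$ in $\gl_{n+1}$, and indeed $[x_k\partial_k \otimes 1 + 1 \otimes e_{kk}, -\partial_j \otimes 1] = \delta_{kj}\partial_j \otimes 1 = -\delta_{kj} \psi(e_{0j})$, matching. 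Similarly $[\psi(e_{ij}), \psi(e_{0k})]$ should equal $-\delta_{ik}\psi(e_{0j})$ — again a direct computation using $[x_i\partial_j, \partial_k] = -\delta_{ik}\partial_j$ and $[e_{ij}, \cdot]$ not touching the $\cd_n$ factor. Fourth — and this is where the bulk of the computation lies — verify the relations involving $\psi(e_{i0})$: namely $[\psi(e_{i0}), \psi(e_{0j})] = \psi([e_{i0}, e_{0j}])$, which for $i \ne j$ should give $\psi(e_{ij})$ up to sign conventions and for $i = j$ should give $\psi(h_i - h_0)$ (an element of the Cartan), as well as $[\psi(h_k), \psi(e_{i0})]$, $[\psi(e_{ij}), \psi(e_{k0})]$, and $[\psi(e_{i0}), \psi(e_{j0})] = 0$. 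These all reduce to commutator identities in the Weyl algebra $\cd_n$ involving the Euler-type operator $\sum_q x_q \partial_q$ plus bookkeeping of the $\gl_n$ matrix terms $\sum_q x_q \otimes e_{iq}$ and $x_i \otimes I_n$.

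The main obstacle — really the only nontrivial one — is the last family of checks, the ones quadratic in the $\cd_n$-degree, particularly $[\psi(e_{i0}), \psi(e_{j0})] = 0$ and $[\psi(e_{i0}), \psi(e_{0j})] = \psi(e_{ij})$ (resp. a Cartan element when $i=j$). The subtlety is that $\psi(e_{i0})$ has three pieces — the matrix-valued $\sum_q x_q \otimes e_{iq}$, the cubic-looking vector field $x_i \sum_q x_q \partial_q \otimes 1$ (degree $+1$ in $\cd_n$-grading but involving the Euler operator), and the scalar-matrix $x_i \otimes I_n$ — and the cancellations that produce $0$ (for the $[\psi(e_{i0}),\psi(e_{j0})]$ bracket) rely on a precise interplay: the term $[x_i \sum_q x_q\partial_q, x_j\sum_q x_q\partial_q]\otimes 1 = (x_i x_j \sum_q x_q\partial_q - x_j x_i \sum_q x_q\partial_q)\otimes 1 = 0$ handles the pure vector-field part, while the cross terms between $\sum_q x_q \otimes e_{iq}$ and $x_j\sum_q x_q\partial_q \otimes 1$ must cancel against the $x_j \otimes I_n$ contributions and the $\gl_n$-bracket $[\sum_q x_q e_{iq}, \sum_q x_q e_{jq}]$ — one has to track the $I_n$ and the trace carefully. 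I expect to present these as a short sequence of displayed computations, using $[x_i \partial_j, x_a] = \delta_{ja} x_i$ and the $\gl_n$ relations repeatedly, and noting that everywhere the constant $\frac{1}{n+1} I_{n+1}$ appearing in $h_k$ only affects central/Cartan elements consistently. Once all generator-pair brackets match those in $\gl_{n+1}$ (projected to $\sl_{n+1}$), the universal property of $U(\sl_{n+1})$ gives the algebra homomorphism, completing the proof.
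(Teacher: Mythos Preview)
Your proposal is correct. The paper itself does not give a proof of this proposition: it simply states the formulas as a consequence of the Shen--Larsson construction and the embedding $\sl_{n+1}\hookrightarrow W_n^+$, citing \cite{Sh,LLZ}. You identify exactly this conceptual origin and then go further, outlining the direct verification of the generator relations; that verification is straightforward (the only nontrivial check, $[\psi(e_{i0}),\psi(e_{j0})]=0$, goes through exactly as you sketch, with the $[A_i,A_j]$ term cancelling against the cross terms $[A_i,B_j]+[B_i,A_j]$ and the $B$--$C$ cross terms cancelling pairwise), so your plan would yield a complete and more explicit proof than what the paper provides.
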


For a  $\gl_n$-module $V$ and a  module $P$ over $D_n$,  by $T(P, V ) $ we denote the space $P \otimes  V$
considered as a module over $ \sl_{n+1}$ through the homomorphism $\psi$. Let $\sigma_{\b1}$ be the algebra automorphism of $D_n$ defined by $$x_i\mapsto x_i, \ \ \frac{\partial}{\partial x_i} \mapsto \frac{\partial}{\partial x_i}-1.$$
The $D_n$-module $A_n$ can be twisted by $\sigma_{\b1}$ to be a new $D_n$-module $A_n^{\b1}$ (see $\S 2.9$ in \cite{GN}) whose module structure is defined by
\begin{equation}\label{A-mod}X\cdot f(x_1,\dots,x_n)=\sigma_{\b1}(X)f(x_1,\dots,x_n), \ \ f(x_1,\dots,x_n)\in A_n, X\in D_n.\end{equation}

The following lemma is helpful for discussing the action of $Z(\tau(W))$ on $V(\l)$.

\begin{lemma}Let $\l\in \Lambda_n^+$. Then $B_{\b1}\otimes V(\l)^\tau\cong T(A_n^{\b1}, V(\l))$ as $\sl_{n+1}$-modules, where $B_{\b1}$ is the irreducible $B$-module defined in the proof of Theorem \ref{iso-thw}, and the action of $\sl_{n+1}$ on $B_{\b1}\otimes V(\l)^\tau$ is defined by the isomorphism $U_S\cong B\otimes W$.
\end{lemma}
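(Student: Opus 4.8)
The plan is to compare the two $\sl_{n+1}$-module structures explicitly on a common underlying vector space and match the generators of $U(\sl_{n+1})$. First I would observe that, as a vector space, $B_{\b1}\otimes V(\l)^\tau \cong \C[h_1,\dots,h_n]v_{\b1}\otimes V(\l)$, while $T(A_n^{\b1},V(\l)) = A_n^{\b1}\otimes V(\l) = \C[x_1,\dots,x_n]\otimes V(\l)$ as a vector space; the evident map sending $h_1^{m_1}\cdots h_n^{m_n}v_{\b1}\otimes w$ to a suitable element of $\C[x]\otimes V(\l)$ (for instance, the linear map induced by $h_i\leftrightarrow -x_i\partial_i$ acting on the highest-weight line, more precisely identifying the $B$-module $B_{\b1}$ with the polynomial representation realized inside $A_n^{\b1}$ via the action of $e_{0i}\mapsto -\partial_i$ and $h_i\mapsto x_i\partial_i$) is a vector-space isomorphism. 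The cleanest route is: the localization $U_S\cong B\otimes W$ gives $Q_{\b1}\cong B_{\b1}\otimes W$ (as in the proof of Theorem \ref{iso-thw}), so $B_{\b1}\otimes V(\l)^\tau \cong Q_{\b1}\otimes_W V(\l)^\tau$, and then I would show the $\sl_{n+1}$-action coming from $U_S$ on this induced module matches the action $\psi$ restricted through the isomorphism.

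The key computational step is to verify that under the candidate vector-space isomorphism the formulas \eqref{x-w-def} for $x_{ij},\omega_k\in W\subset U_S$ acting on $B_{\b1}\otimes V(\l)$ reproduce, after applying $\tau$ as in \eqref{x-w-act}, the same operators as those obtained from $\psi$ in \eqref{3.1}. Concretely: in $T(A_n^{\b1},V(\l))$ we have $e_{0j}$ acting as $-\partial_j$ (this matches the $B$-part, where $e_{0j}$ acts invertibly on $B_{\b1}$), $h_k$ acting as $x_k\partial_k\otimes 1 + 1\otimes e_{kk}$, $e_{ij}$ acting as $x_i\partial_j\otimes 1 + 1\otimes e_{ij}$; then I would compute $x_{ij} = e_{ij}e_{0i}e_{0j}^{-1} - h_i$ directly as an operator on $A_n^{\b1}\otimes V(\l)$ and check it equals $\tau(x_{ij}) = e_{ij}-e_{ii}$ acting purely on the $V(\l)$ factor (the $A_n$-part cancelling because of the twist by $\sigma_{\b1}$, which is exactly why $A_n^{\b1}$ and not $A_n$ appears). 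Similarly for $\omega_k$: substitute the $\psi$-formulas for $e_{k0}, e_{0k}, e_{kj}, h_j$ into the defining expression \eqref{x-w-def} and verify, after the shift $\partial_j\mapsto\partial_j+1$, that everything collapses to $\gamma_k = \sum_j(e_{kj}e_{jj}-e_{kj})$ acting on $V(\l)$ alone. Since $W$ and $B$ together generate $U_S$, and the $B$-actions obviously agree, matching these generators of $W$ suffices.

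The main obstacle I anticipate is the $\omega_k$ computation: it involves a product of a lowering operator $e_{k0}$ (which in $\psi$ is the complicated second-order-looking expression $\sum_q x_q\otimes e_{kq} + x_k\sum_q x_q\partial_q\otimes 1 + x_k\otimes I_n$) with $e_{0k}$, plus the sum $\sum_j (e_{kj}-\tfrac{\delta_{jk}}{n+1}I_{n+1})(h_j-1)e_{0k}e_{0j}^{-1}$, so one must carefully track how the Weyl-algebra parts combine, how the twist by $\sigma_{\b1}$ eliminates the terms with $x$'s, and how the $I_{n+1}$-terms (which involve $e_{00}$, hence are defined only in $U_S$) are handled — here the fact that on $Q_{\b1}$ the element $e_{00}$ acts by a scalar determined by $v_{\b1}$, or equivalently that we only need the image in $U(\mathfrak p_n)^{\mg(-2)}$, is what makes the $I_{n+1}$ and $\tfrac{1}{n+1}$ bookkeeping close up. Once the two sets of generator-actions are shown equal, the map is an $\sl_{n+1}$-module isomorphism, since it is a vector-space isomorphism intertwining a generating set of $U(\sl_{n+1})$.
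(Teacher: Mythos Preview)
Your plan shares the essential computation with the paper's proof --- namely, verifying via the homomorphism $\psi$ that $x_{ij}$ and $\omega_k$ act on (the appropriate piece of) $T(A_n^{\b1},V(\l))$ by the formulas $\tau(x_{ij})=e_{ij}-e_{ii}$ and $\tau(\omega_k)=\gamma_k$. However, your expectation that ``the $A_n$-part cancels'' so that $\psi(x_{ij})$ acts as $1\otimes(e_{ij}-e_{ii})$ on \emph{all} of $A_n^{\b1}\otimes V(\l)$ is not correct. A direct calculation gives
\[
\psi(x_{ij})\;=\;\partial_i\partial_j^{-1}\otimes e_{ij}\;-\;1\otimes e_{ii},
\]
and on $A_n^{\b1}$ the operator $\partial_i\partial_j^{-1}$ is not the identity (the twist $\sigma_{\b1}$ only makes each $\partial_j$ invertible, nothing more). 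Likewise the $B$-actions do not ``obviously agree'' under any factor-preserving map: $h_k$ acts on $B_{\b1}\otimes V(\l)^\tau$ through the first tensorand alone, whereas $\psi(h_k)=x_k\partial_k\otimes 1+1\otimes e_{kk}$ mixes the two factors.

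The paper avoids this by restricting the computation to the Whittaker subspace $\mathrm{wh}_{\b1}(T(A_n^{\b1},V(\l)))=\{v: e_{0i}v=v\ \text{for all }i\}$, which it identifies with $V(\l)$; on that subspace each $\partial_i$ acts by a scalar, so $\partial_i\partial_j^{-1}$ becomes $1$ and your intended calculation does go through, yielding exactly formulas (\ref{x-w-act2}). The step from ``$W$ acts on $\mathrm{wh}_{\b1}$ via $\tau$'' to the full $\sl_{n+1}$-isomorphism is then not a global operator-by-operator match but an appeal to the tensor decomposition $U_S\cong B\otimes W$ together with Lemma~6 of \cite{LL}, which identifies the entire module as $U(\mh_n)\otimes \mathrm{wh}_{\b1}$. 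So your strategy is salvageable once the computation is localized to the Whittaker vectors; the global cancellation you anticipate does not occur, and the isomorphism of the ambient modules is structural rather than a direct comparison of operators on a common basis.
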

\begin{proof}Set
$$\mathrm{wh}_\b1(T(A_n^{\b1}, V(\l)))=\{v\in T(A_n^{\b1}, V(\l))\mid e_{0i}v=v,i=1,\dots,n\}.$$ In $T(A_n^{\b1}, V(\l))$, for $v\in V(\lambda)$,
$$e_{0j}\cdot (1\otimes v)=-\big((\frac{\partial}{\partial x_i}-1)\otimes 1\big)(1\otimes v)=1\otimes v.$$
Conversely, the simultaneous equations $(e_{0j}-1)(f\otimes v)=0$ force the
polynomial part $f$ to be constant. Hence
$\mathrm{wh}_{\b1}(T(A_n^{\b1},V(\l)))=1\otimes V(\l)$. Since $[W,e_{0i}]=0$,  $W\mathrm{wh}_\b1(T(A_n^{\b1}, V(\l)))\subset \text{wh}_\b1(T(A_n^{\b1}, V(\l)))$, i.e., $W V(\l)\subset V(\l)$.
According to the definition of $\psi$ in (\ref{3.1}), by a direct computation, the action of $W$ on $\text{wh}_{\b1}T(A_n^\b1,V)=V(\l)$ is described as follows:
\begin{equation}\label{x-w-act2}\aligned x_{ij}v &=  (e_{ij}-e_{ii})v,\ 1\leq i\neq j\leq n,\\
 \omega_k v
&=\sum_{j=1}^n (e_{kj}e_{jj}-e_{kj})v,\ 1\leq k\leq n,
\endaligned \end{equation}where $ v\in V(\l)$. So
as a $W$-module, $\mathrm{wh}_\b1(T(A_n^{\b1}, V(\l)))\cong V(\l)^\tau$.
 By Lemma 6 in \cite{LL}, $T(A_n^{\b1}, V(\l))=U(\mh_n)\otimes V(\l)\cong B_{\b1}\otimes V(\l)^\tau$.
\end{proof}

For $\l=(\l_1,\dots,\l_n)\in \Lambda_n^+$, set $\l_0=-|\l|$ and $\widetilde{\l}=(\l_0,\l_1,\dots,\l_n)$. Since restricted to $\sigma\tau(W)$, $V(\l)$ is also a highest weight module,  it follows that there is a character $\chi: Z(\sigma\tau(W))\rightarrow \C$ such that $z-\chi(z)$ acts locally nilpotently on $V(\l)$ for any $z\in Z(\sigma\tau(W))$. So as a $\sigma\tau(W)$-module, $V(\l)$ has a generalized
central character.

\begin{lemma}\label{centra-character}  Let $\l,\l'\in \Lambda_n^+$. The following statements are equivalent:
\begin{enumerate}
\item With respect to the action of $Z(\sigma\tau(W))$,  $V(\l)$ and $V(\l')$ have the same  generalized central character.
\item With respect to  the action of  $Z(\tau(W))$,  $V(\l)$ and $V(\l')$ have the same  generalized central character.
\item With respect to the action of $Z(\sl_{n+1})$, the $\sl_{n+1}$-modules $T(A_n, V(\l))$ and $T(A_n, V(\l'))$ have the same  generalized central character.
\item $\widetilde{\l}$ and $\widetilde{\l'}$ are conjugate under the dot action of the symmetric group $S_{n+1}$.
\end{enumerate}
\end{lemma}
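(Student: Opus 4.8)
The plan is to prove the cycle $(1)\Leftrightarrow(2)\Leftrightarrow(3)\Leftrightarrow(4)$ by transporting everything to a statement about highest weight $\sl_{n+1}$-modules, where the Harish-Chandra theorem applies. First, for $(1)\Leftrightarrow(2)$: the element $X=\sum_{k=1}^{n-1}e_{kn}$ satisfies $X^2=0$ in $U(\gl_n)$, so $\sigma=\exp(-\mathrm{ad}X)$ is conjugation by the unit $1-X$; hence for any $\gl_n$-module $V$ the assignment $v\mapsto(1+X)v$ is an isomorphism of $W$-modules between the restriction of $V$ along $\sigma\tau$ and its restriction along $\tau$. Applying this to $V(\l)$ and to $V(\l')$, and using that $\sigma$ carries $Z(\tau(W))$ isomorphically onto $Z(\sigma\tau(W))$, gives $(1)\Leftrightarrow(2)$.

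Next, $(3)\Leftrightarrow(4)$. A direct computation with the homomorphism $\psi$ of Proposition \ref{s-iso} shows that, for $v_\l$ the highest weight vector of $V(\l)$, the vector $1\otimes v_\l\in T(A_n,V(\l))$ is annihilated by all $e_{ij}$ with $0\le i<j\le n$ and has $h_k$-eigenvalue $\l_k$ for $1\le k\le n$; since $I_n=\sum_{i=1}^n e_{ii}$ acts on $V(\l)$ by the scalar $|\l|$, the weight-$\widetilde\l$ space of the weight module $T(A_n,V(\l))$ is exactly $\C(1\otimes v_\l)$. Thus $1\otimes v_\l$ is a highest weight vector of weight $\widetilde\l=(-|\l|,\l_1,\dots,\l_n)$, and $T(A_n,V(\l))$ is a highest weight quotient of $M(\widetilde\l)$; in particular it admits the generalized central character $\chi_{\widetilde\l}$. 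By the Harish-Chandra theorem, $\chi_{\widetilde\l}=\chi_{\widetilde{\l'}}$ if and only if $\widetilde\l+\rho$ and $\widetilde{\l'}+\rho$ lie in one $S_{n+1}$-orbit, i.e. if and only if $\widetilde\l$ and $\widetilde{\l'}$ are conjugate under the dot action of $S_{n+1}$; this is $(3)\Leftrightarrow(4)$.

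It remains to link the two ends, which I would do via $(2)\Leftrightarrow(3)$. By the lemma preceding this one, $T(A_n^{\b1},V(\l))\cong B_{\b1}\otimes V(\l)^\tau\cong Q_{\b1}\otimes_{W(e)}V(\l)^\tau$, and by Theorem \ref{main-iso-th} together with $Z(B)=\C$ one has $Z(U_S)\cong Z(W)\cong Z(\tau(W))$. Moreover the natural map $Z(U(\sl_{n+1}))\to Z(U_S)$ is an isomorphism — the standard identification of the centre of a finite $W$-algebra with $Z(U(\mg))$, compatible with Skryabin's equivalence, see \cite{Pr1,L}. Hence the generalized $Z(U(\sl_{n+1}))$-central character of $T(A_n^{\b1},V(\l))$ and the generalized $Z(\tau(W))$-central character of $V(\l)^\tau$ determine one another. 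Finally $T(A_n^{\b1},V(\l))$ and $T(A_n,V(\l))$ carry the same generalized $Z(\sl_{n+1})$-central character, since $A_n^{\b1}$ is obtained from $A_n$ by the twisted-localization construction (equivalently, both occur as subquotients of the coherent family attached to $V(\l)$, which has a single central character), see \cite{M,GN}. Combining these facts gives $(2)\Leftrightarrow(3)$ and closes the cycle.

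The step I expect to require the most care is the last one, namely that $T(A_n,V(\l))$ and $T(A_n^{\b1},V(\l))$ share a central character. The twist $A_n^{\b1}$ is not even a weight module over $\mh_n$, so the comparison cannot be read off the weights and must be made either through the twisted-localization/coherent-family machinery or, equivalently, by tracing the Premet--Skryabin identification $Z(U(\sl_{n+1}))\xrightarrow{\sim}Z(W(e))$ through the isomorphism $T(A_n^{\b1},V(\l))\cong Q_{\b1}\otimes_{W(e)}V(\l)^\tau$ and checking that the resulting $\sl_{n+1}$-character is precisely $\chi_{\widetilde\l}$. Once that identification is pinned down, the rest of the argument is formal.
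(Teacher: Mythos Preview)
Your overall strategy coincides with the paper's: $(1)\Leftrightarrow(2)$ via the inner nature of $\sigma$, $(3)\Leftrightarrow(4)$ via Harish-Chandra applied to a highest weight vector in $T(A_n,V(\l))$, and $(2)\Leftrightarrow(3)$ through $B_{\b1}\otimes V(\l)^\tau\cong T(A_n^{\b1},V(\l))$, the identification $Z(U_S)\cong Z(W)$, and a comparison of $T(A_n^{\b1},V(\l))$ with $T(A_n,V(\l))$. Two of your steps, however, contain genuine errors.

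First, $X^2\neq 0$ in $U(\gl_n)$: already for $n=2$ one has $X=e_{12}$ and $e_{12}^2\neq 0$. What is true is that $X^2=0$ as a \emph{matrix}, so $\exp(-X)=1-X$ in $GL_n$ and $\sigma=\mathrm{Ad}(1-X)$ on the Lie algebra $\gl_n$; but on a general finite-dimensional module the operator $X|_V$ is only nilpotent, not of square zero (e.g.\ on $S^2\C^n$). The correct intertwiner is $v\mapsto\exp(X|_V)v$, a finite sum, not $v\mapsto(1+X)v$. With this fix your $(1)\Leftrightarrow(2)$ goes through; the paper simply records that $\sigma$ is an inner automorphism preserving the Borel, whence $V(\l)^\sigma\cong V(\l)$.

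Second, and more seriously, $T(A_n,V(\l))$ is \emph{not} a highest weight module in general: it need not be generated by $1\otimes v_\l$. For $\l=0$ one checks directly from the formulas in Proposition~\ref{s-iso} that every generator of $\sl_{n+1}$ annihilates $1\otimes 1$ (since $I_n$ and all $e_{ij}$ act by zero on the trivial module), so the cyclic submodule is $\C(1\otimes 1)$ while $T(A_n,\C)=A_n$ is infinite-dimensional. Thus ``$T(A_n,V(\l))$ is a quotient of $M(\widetilde\l)$'' is false and cannot be used to read off the central character. The paper's argument instead invokes that $T(A_n,V(\l))$ is \emph{indecomposable}; since the submodule it generates from $1\otimes v_\l$ is a highest weight module of highest weight $\widetilde\l$, indecomposability forces the entire module to carry the generalized central character $\chi_{\widetilde\l}$. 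You correctly identified the highest weight vector and its weight; replace the cyclicity claim by this indecomposability argument and your $(3)\Leftrightarrow(4)$ is repaired.
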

\begin{proof}
 Since $\sigma$ is an automorphism, it identifies the centers of $\tau(W)$
and $\sigma\tau(W)$; hence (1) and (2) are equivalent. By
Theorem~\ref{main-iso-th}(a), the $\tau(W)$-modules $V(\l)$ and $V(\l')$
have the same generalized central character if and only if the
$\sl_{n+1}$-modules $B_{\b1}\otimes V(\l)^\tau$ and
$B_{\b1}\otimes V(\l')^\tau$ do. The preceding lemma identifies these
modules with $T(A_n^{\b1},V(\l))$ and $T(A_n^{\b1},V(\l'))$, respectively.
Twisting the polynomial Weyl-algebra factor does not change the induced
$Z(\sl_{n+1})$-character, so (2) and (3) are equivalent.

Since $T(A_n, V(\l))$ is indecomposable  and its $\sl_{n+1}$-submodule generated by the highest weight vector of $V(\l)$ is a  highest weight module, or by Proposition 3.4 in \cite{CG},  the $\sl_{n+1}$-module $T(A_n, V(\l))$  has the generalized central character $\chi_{\widetilde{\l}}$ decided by the highest weight  $\widetilde{\l}$.
Then by the Harish-Chandra's Theorem on central characters,  (3) and (4) are equivalent.
\end{proof}

By Lemma \ref{centra-character}, we can define an equivalence relation
on $\Lambda_n^+$ such that:  $\l \sim \l'$ if and only if $\widetilde{\l}$ and $\widetilde{\l'}$ are conjugate under the dot action of the symmetric group $S_{n+1}$. For $\l\in\Lambda_n^+$ , denote the equivalence
class containing $\l$ by $[\l]$. The following lemma was known from page 565 in \cite{M}, or see Lemma 2.1 in \cite{CG}.

\begin{lemma}\label{centra-class}  Let $\l\in \Lambda_n^+$.
\begin{enumerate}
\item If $-|\l|-\l_1\not\in \Z$, then $[\l]=\{\l\}$.
\item If $-|\l|=\l_i-i$ for some $i\in\{1,\dots,n\}$, then $[\l]=\{\l\}$.
\item If $-|\l|-\l_1\in \Z$ and $-|\l|,\l_1-1,\dots, \l_n-n$ are distinct, then there is a unique $\mu\in \Lambda_n^+$ such that $$-|\mu|-\mu_1\in \Z_{\geq 0}\ \text{and}\
[\l]=\{\mu,\mu(1),\dots,\mu(n)\},$$
where $\mu(i)=(\mu_0+1,\dots,\mu_{i-1}+1,\mu_{i+1},\dots,\mu_n)\in \Lambda_n^+$, and $\mu_0=-|\mu|$.
\end{enumerate}
\end{lemma}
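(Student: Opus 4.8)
The plan is to strip the statement down to an elementary fact about multisets of complex numbers, via Lemma~\ref{centra-character} and the Harish--Chandra description of the dot action. By the equivalence (1)$\Leftrightarrow$(4) of Lemma~\ref{centra-character}, $\l\sim\l'$ holds exactly when $\widetilde\l$ and $\widetilde{\l'}$ lie in the same $S_{n+1}$-orbit under the dot action; after the standard $\rho$-shift (with $\rho=(n,n-1,\dots,1,0)$) and a harmless overall translation, this becomes the condition that the coordinate multisets agree, i.e.
$$M(\l):=\{-|\l|,\ \l_1-1,\ \l_2-2,\ \dots,\ \l_n-n\}=M(\l'),$$
where we read $\widetilde\l=(-|\l|,\l_1,\dots,\l_n)$. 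Two elementary observations drive everything. First, the assignment $\l\mapsto(\l_1-1,\l_2-2,\dots,\l_n-n)$ is a bijection from $\Lambda_n^+$ onto the set of strictly decreasing $n$-tuples with consecutive differences in $\Z_{\geq 1}$; in particular a dominant weight is recovered from the \emph{set} $\{\l_1-1,\dots,\l_n-n\}$, with $-|\l|$ then determined. Second, and crucially,
$$\sum_{x\in M(\l)}x=-|\l|+\sum_{j=1}^n(\l_j-j)=-\frac{n(n+1)}{2}$$
is independent of $\l$; this constancy is exactly what lets one reconstruct a dominant weight from such a multiset with no further constraint on the sum of its entries.

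Cases (1) and (2) are then short. In case (1), $-|\l|$ lies in a different $\Z$-coset from $\l_1-1,\dots,\l_n-n$, so $M(\l)$ has $n$ entries in one coset and a single outlier. If $\l'\sim\l$, then since $\l'_1-1,\dots,\l'_n-n$ all lie in one coset, the outlier of $M(\l')=M(\l)$ must be $-|\l'|$; matching forces $-|\l'|=-|\l|$ and $\{\l'_j-j\}=\{\l_j-j\}$, and strict monotonicity of both decreasing strings gives $\l'=\l$. In case (2), the value $v:=-|\l|=\l_i-i$ occurs with multiplicity $2$ in $M(\l)$ while every other value occurs once, so the only $n$-element sub-multiset of $M(\l)$ consisting of pairwise distinct numbers is the underlying set $\{\l_1-1,\dots,\l_n-n\}$; hence $\{\l'_j-j\}=\{\l_j-j\}$ and again $\l'=\l$. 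Thus $[\l]=\{\l\}$ in both cases.

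For case (3), $M(\l)$ is a set of $n+1$ distinct numbers lying in one $\Z$-coset; write its decreasing rearrangement as $b_0>b_1>\dots>b_n$, so $b_{k-1}-b_k\in\Z_{\geq 1}$. For each $k\in\{0,\dots,n\}$ let $\l^{(k)}\in\Lambda_n^+$ be the weight whose shifted entries $\l^{(k)}_1-1>\dots>\l^{(k)}_n-n$ list $M(\l)\setminus\{b_k\}$ in decreasing order, which is dominant because the remaining $b$'s still have consecutive gaps $\geq 1$. By the sum identity, $-|\l^{(k)}|=b_k$ automatically, so $M(\l^{(k)})=M(\l)$ and $\l^{(k)}\sim\l$; conversely any $\l'\sim\l$ has $\{\l'_j-j\}$ equal to $M(\l)$ with one element deleted, hence $\l'$ is one of the $\l^{(k)}$, and the $\l^{(k)}$ are pairwise distinct since $-|\l^{(k)}|=b_k$. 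So $[\l]=\{\l^{(0)},\dots,\l^{(n)}\}$. Now $\l^{(0)}$ deletes the largest element $b_0$, giving $-|\l^{(0)}|-\l^{(0)}_1=b_0-b_1-1\geq 0$, whereas for $k\geq 1$ the largest remaining element is $b_0$, so $-|\l^{(k)}|-\l^{(k)}_1=b_k-b_0-1<0$; hence $\mu:=\l^{(0)}$ is the unique member of $[\l]$ with $-|\mu|-\mu_1\in\Z_{\geq 0}$. Finally, putting $\mu_0:=-|\mu|$, one checks $\widetilde\mu=(b_0,b_1+1,\dots,b_n+n)$, i.e.\ $\mu_l-l=b_l$ for $0\leq l\leq n$; substituting this into the displayed formula for $\mu(i)$ gives $\{\mu(i)_j-j:1\leq j\leq n\}=\{b_0,\dots,b_n\}\setminus\{b_i\}$ and, again by the sum identity, $-|\mu(i)|=b_i$, so $M(\mu(i))=M(\l)$ and, comparing $-|\cdot|$ values, $\mu(i)=\l^{(i)}$. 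This identifies $[\l]=\{\mu,\mu(1),\dots,\mu(n)\}$, and en route shows each $\mu(i)\in\Lambda_n^+$ because the $b$'s decrease with integer gaps.

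I expect no genuinely hard step; the work is bookkeeping. The main care is in the case split itself --- tracking residue classes and multiplicities among the shifted coordinates $-|\l|,\l_1-1,\dots,\l_n-n$, which is precisely what separates (1) and (2) from (3) --- and in verifying that the explicit weights $\mu(i)$ in the statement coincide with the abstract ``delete $b_i$'' reconstructions; that last matching rests on the constancy of $\sum_{x\in M(\l)}x$, without which one could not get $-|\mu(i)|=b_i$ for free. Since this linkage-class computation is classical (it is the content of p.\,565 of \cite{M}, or Lemma~2.1 of \cite{CG}), one could alternatively just cite it, but the argument above is short enough to record in full.
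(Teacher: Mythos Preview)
Your proof is correct and follows essentially the same approach as the paper: both reduce $\lambda\sim\lambda'$ to equality of the multisets $\{-|\lambda|,\lambda_1-1,\dots,\lambda_n-n\}$ via the $\rho$-shifted dot action and then handle each case by elementary bookkeeping on these shifted coordinates. Your constant-sum identity $\sum_{x\in M(\lambda)}x=-\tfrac{n(n+1)}{2}$ is a clean organizing device (implicit in the paper's direct construction of $\mu$), and your treatment of case~(3) is in fact more detailed than the paper's terse sketch.
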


\begin{proof} For the convenience of readers, we provide a proof of this lemma.
 Let $\l'\in [\l]$. Since
$\widetilde{\l}$ and $\widetilde{\l'}$ are conjugate under the dot action of  $S_{n+1}$, it follows that there is a permutation $\omega\in S_{n+1}$
such that $\omega(\widetilde{\l}+\rho)=\widetilde{\l'}+\rho$, i.e.,
\begin{equation}\label{permutation}\omega(\l_0,\dots,\l_i-i,\dots,\l_n-n)
=(\l'_0,\dots,\l'_i-i,\dots,\l'_n-n),\end{equation}
where $\l_0=-|\l|$. Note that $\widetilde{\l}+\rho
=(\frac{1}{2}n+\l_0,\dots,\frac{1}{2}n+\l_i-i,\dots,\frac{1}{2}n+\l_n-n)$.

(1) Since $-|\l|-\l_1\not\in \Z$ and $\l\in \Lambda_n^+$, it follows that $\l_0-\l_i\not\in \Z$ for any $i\in \{1,\dots,n\}$, and $$\l_k-k-(\l_{k+1}-k-1),\l'_k-k-(\l'_{k+1}-k-1)\in \Z_{\geq 1},$$ for any $k\in \{1,\dots,n-1\}$. By checking   two sides of equation (\ref{permutation}),
 we deduce that $\omega=\mathrm{id}$ and $\l'=\l$.

(2) If $-|\l|=\l_i-i$ for some $i\in\{1,\dots,n\}$, then   $\omega$ is the transposition $(0i)$, by comparing   two sides of equation (\ref{permutation}). Hence
$\l'=\l$.

(3) Suppose that $\l_i-i>-|\l|>\l_{i+1}-i-1$ for some $i\in\{1,\dots,n\}$, where $a>b$ means $a-b\in \Z_{>0}$. We can choose  $\mu\in \Lambda_n^+$ such $(0\cdots i)\cdot \widetilde{\mu}= \widetilde{\l}$.
Indeed, from $(0\cdots i)\cdot \widetilde{\mu}=
(\mu_i-i,\mu_0+1,\dots,\mu_{i-1}+1,\mu_{i+1},\dots,\mu_n)$,
we have
$\mu_0=\l_1-1,\dots,\mu_{i-1}=\l_i-1,\mu_i=i-|\l|, \mu_{j}=\l_j$ for $j\geq i+1$.
Then $-|\mu|-\mu_1=\mu_0-\mu_1=\l_1-\l_2\in \Z_{\geq 0}$ and
$[\l]=[\mu]=\{\mu,\mu(1),\dots,\mu(n)\}$.
\end{proof}

%\begin{lemma}Let $\mu\in \Lambda_n^+$ such that $\mu_0-\mu_1\in \Z_{\geq 0}$. We  have an exact sequence of $\sigma\tau(W)$-modules:
%\begin{equation}\label{complex2}\xymatrix
%{0\ar[r] & V(\mu)\ar[r]^{\pi^\mu_0} &V(\mu(1))\ar[r]^{\hskip 0.1cm\pi^\mu_1} &\cdots\\
%\cdots\ar[r]^{\pi^\mu_{n-2}\hskip 0.8cm}&{V(\mu(n-1))}\ar[r]^{\hskip 0.1cm\pi^\mu_{n-1}} &V(\mu(n))\ar[r]& 0.}
%\end{equation}
%where $\mu(i)=(\mu_0+1,\dots,\mu_{i-1}+1,\mu_{i+1},\dots,\mu_n)\in \Lambda_n^+$.
%\end{lemma}
%
%\begin{proof}Apply $V(\mu)\otimes -$ to  the exact sequence (\ref{complex}), we have the next exact sequence
%of $\sigma\tau(W)$-modules:
%\begin{equation}\label{complex3}\xymatrix
%{0\ar[r] & V(\mu)\ar[r]^{1\otimes \pi_0} &V(\mu)\otimes V(\delta_1)\ar[r]^{\hskip 0.5cm 1\otimes \pi_1} &\cdots\\
%\cdots\ar[r]^{1\otimes\pi_{n-2}\hskip 1cm}&{V(\mu)\otimes V(\delta_{n-1})}\ar[r]^{\hskip 0.1cm1\otimes\pi_{n-1}} &V(\mu)\otimes V(\delta_n)\ar[r]& 0.}
%\end{equation}
%For $r\in\{1,\dots,n\}$, by the Pieri rule (see page 61 in \cite{D}),
%\begin{equation}\label{pieri} V(\mu)\otimes V(\delta_r)=\bigoplus_{J\subset \{1,\dots,n\},|J|=r\atop \mu+e_{J}\in \Lambda_n^+}V(\mu+e_{J}),\end{equation}
%where $e_J=\sum_{j\in J} e_j$. Among  all the direct summands
%on the right side of (\ref{pieri}), only the summand $V(\mu(r))$ has
%the same generalized  character as $V(\mu)$, as $\sigma\tau(W)$-modules.
%Therefore consider the action of the  center of $\sigma\tau(W)$
%on every module in (\ref{complex3}), we can deduce the exact sequence (\ref{complex2}).
%
%\end{proof}

Now we are in  a position to
decide the irreducibility of  $V(\l)$
when it is restricted to $\sigma\tau(W)$.

\begin{theorem}\label{irre-factor}  Let $\l\in \Lambda_n^+$.
\begin{enumerate}
\item If $-|\l|-\l_1\not\in \Z$, then restricted to $\sigma\tau(W)$, $V(\l)$ is irreducible.
\item  If $-|\l|=\l_i-i$ for some $i\in\{1,\dots,n\}$, then restricted to $\sigma\tau(W)$, $V(\l)$ is irreducible.
\item Let $\mu\in \Lambda_n^+$ such that $-|\mu|-\mu_1\in \Z_{\geq 0}$.
 Restricted to $\sigma\tau(W)$,  the composition length of $V(\mu(i))$ is $2$ for $i=1,\dots,n-1$, but $V(\mu)$  and $V(\mu(n))$ are irreducible.
\end{enumerate}
\end{theorem}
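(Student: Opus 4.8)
The plan is to reduce everything to the reducibility criterion of Theorem \ref{sub-mod} together with the generalized-central-character bookkeeping in Lemmas \ref{centra-character} and \ref{centra-class}. First observe that, by Theorem \ref{sub-mod}, the restriction of $V(\l)$ to $\sigma\tau(W)$ is reducible as soon as there is an index $s\in\{1,\dots,n-1\}$ with $\l_s-s+|\l|\in\Z_{\geq 1}$ and $s-|\l|-\l_{s+1}\in\Z_{\geq 0}$. For part (1), when $-|\l|-\l_1\not\in\Z$ we have $-|\l|-\l_s\not\in\Z$ for every $s$ (since $\l_1-\l_s\in\Z$), so condition (\ref{red-condition}) can never be satisfied; I then argue that $V(\l)$ is in fact irreducible over $\sigma\tau(W)$ — not merely "has no submodule produced by Theorem \ref{sub-mod}" — by invoking Lemma \ref{centra-class}(1): $[\l]=\{\l\}$, so any composition factor of $V(\l)|_{\sigma\tau(W)}$ is a highest weight module with the same central character, hence (being finite dimensional, by Proposition \ref{finite-condition} its highest weight is forced) must already be all of $V(\l)$. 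A cleaner version of this last step: since $V(\l)$ restricted to $\gl_{n-1}$ is multiplicity-free (the branching rule (\ref{multiplicity-free})), any $\sigma\tau(W)$-submodule is a sum of $\gl_{n-1}$-isotypic pieces $V_{n-1}(\mu)$, $\mu\in I(\l)$, and one checks using Lemma \ref{lemma-3.3} that the $y_k$ and the $e_{nk}$ connect all the interlacing weights into a single orbit unless a "gap" of the type (\ref{red-condition}) occurs. I would present the argument in this combinatorial form, since it simultaneously handles all three cases.

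For part (2), when $-|\l|=\l_i-i$ for some $i$, the same computation shows condition (\ref{red-condition}) fails for all $s$: if $s<i$ then $\l_s-s+|\l|=(\l_s-s)-(\l_i-i)\geq 0$ but $s-|\l|-\l_{s+1}=(s-\l_{s+1})+( \l_i-i)$, and because $\l_s-s\geq \l_{s+1}-s-1$ forces $\l_s-s+|\l|\geq \l_{s+1}-s-1+|\l|$, the pair of integrality/sign conditions is incompatible; a symmetric check disposes of $s\geq i$. Again $[\l]=\{\l\}$ by Lemma \ref{centra-class}(2), so the multiplicity-free / central-character argument upgrades "no proper submodule from Theorem \ref{sub-mod}" to genuine irreducibility.

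For part (3), with $-|\mu|-\mu_1\in\Z_{\geq 0}$, recall $[\mu]=\{\mu,\mu(1),\dots,\mu(n)\}$. I first treat $V(\mu(i))$ for $1\leq i\leq n-1$: I claim $s=i$ satisfies (\ref{red-condition}) for the weight $\l=\mu(i)$, because $\mu(i)_i-i+|\mu(i)|$ and $i-|\mu(i)|-\mu(i)_{i+1}$ work out (using $|\mu(i)|=|\mu|+i$ and the explicit entries $\mu(i)_j=\mu_{j-1}+1$ for $j\leq i$, $\mu(i)_j=\mu_j$ for $j>i$) to the integer $\mu_0-\mu_i+\dots$ which is $\geq 1$ and $\geq 0$ respectively precisely under the hypothesis $-|\mu|-\mu_1\in\Z_{\geq 0}$; this is the one genuine computation in the proof. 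Hence $V(\mu(i))|_{\sigma\tau(W)}$ is reducible, with the submodule $N$ generated by $e_{ni}^{k}v_{\mu(i)}$, $k=\mu(i)_i-i+|\mu(i)|$. To see the composition length is exactly $2$, I compute the generalized central characters: the quotient $V(\mu(i))/N$ and the submodule $N$ are highest weight $\sigma\tau(W)$-modules, and by Lemma \ref{centra-character} each composition factor corresponds to an element of $[\mu]$; a dimension/weight count via (\ref{multiplicity-free}) — splitting $I(\mu(i))$ into the interlacing weights reachable before versus after the "gap" at the $i$-th coordinate — shows there are exactly two factors and identifies them as $L_{\sigma\tau(W)}$ of $\mu(i)$ and of the appropriate other member of $[\mu]$. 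Finally $V(\mu)$ and $V(\mu(n))$ are irreducible because for these two weights one checks (\ref{red-condition}) fails for every $s$: for $\mu$ this is the hypothesis $-|\mu|-\mu_1\geq 0$ pushing $\mu_s-s+|\mu|<0$ for all $s\le n-1$ in the relevant range, and for $\mu(n)$ a parallel check at the top; then irreducibility follows as in parts (1)–(2) since no $\sigma\tau(W)$-submodule can split off a $\gl_{n-1}$-isotypic component.

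The main obstacle I anticipate is not the reducibility direction — that is Theorem \ref{sub-mod} applied to an explicit $s$ — but proving that the composition length is \emph{at most} $2$ and identifying the factors: this requires showing that the submodule generated by $e_{ns}^{k}v_\l$ is itself irreducible over $\sigma\tau(W)$, which I would do by exhibiting it as $\mathrm{im}\,\pi$ of an explicit $\sigma\tau(W)$-map into $V(\l)$ (as in the $\wedge^k\C^n$ example) or, more robustly, by the multiplicity-one branching argument: both the submodule and the quotient, restricted to $\gl_{n-1}$, are sums of distinct $V_{n-1}(\mu)$'s, and Lemma \ref{lemma-3.3} shows the raising operators $y_k,e_{ij}$ act transitively on the top layer of each piece, leaving no room for a third factor.
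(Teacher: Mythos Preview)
Your overall strategy --- Theorem \ref{sub-mod} for reducibility, Theorem \ref{finite-1} plus Lemmas \ref{centra-character}--\ref{centra-class} to pin composition factors to $[\l]$, and the multiplicity-free branching (\ref{multiplicity-free}) to exclude repeated factors --- is exactly the paper's. Your argument for (1) and (2), where $[\l]=\{\l\}$ forces a unique factor, is the paper's proof verbatim.

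The gap is in part (3), for the irreducibility of $V(\mu)$ and $V(\mu(n))$. You write ``irreducibility follows as in parts (1)--(2)'', but that argument rested on $|[\l]|=1$, which fails here: $[\mu]$ has $n+1$ elements. What actually finishes the job (and what you \emph{do} invoke for the length-$2$ case of $V(\mu(i))$) is the extra interlacing constraint: any composition factor of $V(\l)|_{\sigma\tau(W)}$ is some $L_{\sigma\tau(W)}(\nu_1,\dots,\nu_{n-1},\eta)$ with $(\nu_1,\dots,\nu_{n-1})\in I(\l)$, and Theorem \ref{finite-1} realizes it as a quotient of $V(\nu)$ for some $\nu\in[\l]$. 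For $\l=\mu$ one checks that among $\mu,\mu(1),\dots,\mu(n)$ only $\nu=\mu$ has $(\nu_1,\dots,\nu_{n-1})\in I(\mu)$, since $\mu(j)_1=\mu_0+1>\mu_1$ for every $j\geq 1$ by the hypothesis $-|\mu|-\mu_1\geq 0$; an analogous check at the bottom coordinate handles $\l=\mu(n)$. That is what forces a single composition factor. You should make this interlacing count explicit rather than pointing back to (1)--(2).

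Your preferred ``combinatorial connectivity'' route --- that the operators $y_k$, $e_{nk}$ link all $V_{n-1}(\nu)$, $\nu\in I(\l)$, into one orbit precisely when (\ref{red-condition}) fails for every $s$ --- is a converse to Theorem \ref{sub-mod} that you have not proved; ``one checks using Lemma \ref{lemma-3.3}'' hides the genuine work of showing the projections between adjacent isotypic pieces are nonzero. The paper sidesteps this entirely via the central-character plus interlacing count. Relatedly, your verifications that (\ref{red-condition}) fails in the irreducible cases are logically superfluous to the paper's argument: Theorem \ref{sub-mod} is only a sufficient condition for reducibility, and the paper never uses its failure.
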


\begin{proof} Write $A=\sigma\tau(W)$. Let $N$ be a simple composition factor of
$V(\l)|_A$. By the highest weight module theory of $A$,
\[
 N\cong L_A(\beta_1,\dots,\beta_{n-1},\eta),
\]
where $\beta=(\beta_1,\dots,\beta_{n-1})$ interlaces $\l$. By
Theorem~\ref{finite-1}, there is a weight $\nu\in\Lambda_n^+$ with
\[
 \nu=(\beta_1,\dots,\beta_{n-1},\nu_n)
\]
such that $N$ is
a quotient of $V(\nu)|_A$. Since $N$ is also a subquotient of $V(\l)|_A$,
the  modules $V(\l)|_A$ and $V(\nu)|_A$ have the same generalized central character. Hence
$\nu\in[\l]$ by Lemma~\ref{centra-character}.

Suppose first that $[\l]=\{\l\}$. Then necessarily $\nu=\l$. The
corresponding $\gl_{n-1}$-highest weight occurs only once in the
multiplicity-free branching decomposition \eqref{multiplicity-free}. Thus
$V(\l)|_A$ has a single simple composition factor, with multiplicity one.
It is therefore irreducible. Lemma~\ref{centra-class}(1) and (2) give
assertions (1) and (2).

It remains to consider the class
$[\mu]=\{\mu,\mu(1),\dots,\mu(n)\}$. For convenience set
$\mu(0)=\mu$. Apply the preceding paragraph to $V(\mu(i))$. The possible
$\nu$ are found by requiring that
$(\nu_1,\dots,\nu_{n-1})$ interlace $\mu(i)$ and that the $y_n$-eigenvalue
agree. Using the scalar $d_\lambda(\beta)$ computed in the proof of
Theorem~\ref{sub-mod}, a direct substitution gives
\[
 \begin{array}{c|c}
 i & \text{possible }\nu\\ \hline
 0 & \mu(0)\\
 1\leq i\leq n-1 & \mu(i-1),\ \mu(i)\\
 n & \mu(n).
 \end{array}
\]
Each possibility has multiplicity one by the branching rule. Thus the
composition length is at most two for $1\leq i\leq n-1$ and at most one
at the two endpoints. For an interior index $i$, the hypotheses of
Theorem~\ref{sub-mod} hold with $s=i$, because
\[
 \lambda_i-i+|\lambda|=\mu_{i-1}-\mu_i+1\geq1,
 \qquad
 i-|\lambda|-\lambda_{i+1}=\mu_i-\mu_{i+1}\geq0,
\]
where $\lambda=\mu(i)$. Hence $V(\mu(i))|_A$ is reducible and has length
exactly two. The endpoint modules $V(\mu)$  and $V(\mu(n))$ have length one and are irreducible.
\end{proof}

The following example gives irreducible $\sigma\tau(W)$ sub-quotients of finite-dimensional irreducible $\gl_n$-modules $V(\delta_{k})$  with fundamental weights $\delta_k=e_1+\dots+e_k$.

\begin{example} For  $\gl_n$-modules $V(\delta_{k}):=\wedge^k \C^n$, $k=1,\dots,n$,  we have the following exact sequence of $\sigma\tau(W)$-modules:
\begin{equation}\label{complex}\xymatrix
{0\ar[r] & V(\delta_0)\ar[r]^{\pi_0} &V(\delta_1)\ar[r]^{\hskip 0.1cm\pi_1} &\cdots
\cdots\ar[r]^{\pi_{n-2}\hskip 0.1cm}&{V(\delta_{n-1})}\ar[r]^{\hskip 0.1cm\pi_{n-1}} &V(\delta_n)\ar[r]& 0,}
\end{equation}
where
$\pi_{k-1}$ is the $\sigma\tau(W)$-module homomorphism defined by
\begin{equation*}\begin{array}{lrcl}
\pi_{k-1}:& V(\delta_{k-1}) & \rightarrow & V(\delta_{k}),\\
      & v & \mapsto & e_n\wedge v,
\end{array}\end{equation*}
for all $v\in V(\delta_{k-1})$, $k\in \{1,\cdots, n\}$, $V(\delta_0)=\C$ is the one-dimensional trivial $\gl_n$-module, and
$e_n\wedge a:=ae_n$, for $a\in \C$.
Moreover
$$\mathrm{im}\pi_{k-1}=\mathrm{span}\{e_{i_1}\wedge\dots \wedge e_{i_{k-1}}\wedge e_n\mid
1\leq i_1<i_2<\dots<i_{k-1}<n\}$$ is an irreducible $\sigma\tau(W)$-module.
\end{example}
%Let $\mu\in \Lambda_n^+$ such that $-|\mu|-\mu_1\in \Z_{\geq 0}$. By Theorems \ref{sub-mod} and  \ref{irre-factor}, we  have the following exact sequence of $\sigma\tau(W)$-modules:
%\begin{equation}\label{complex2}\xymatrix
%{0\ar[r] & V(\mu)\ar[r]^{\hskip 0.1cm\pi_0^\mu} &V(\mu(1))\ar[r]^{\hskip 0.1cm\pi_1^\mu} &\cdots\\
%\cdots\ar[r]^{\hskip 0.8cm}
%&{V(\mu(n-1))}\ar[r]^{\hskip 0.1cm \pi_{n-1}^\mu} &V(\mu(n))\ar[r]& 0,}
%\end{equation}
%where $\mu(i)=(\mu_0+1,\dots,\mu_{i-1}+1,\mu_{i+1},\dots,\mu_n)\in \Lambda_n^+$,
%$\pi_{i-1}^\mu$ is the $\sigma\tau(W)$-module homomorphism
%mapping $v_{\mu(i-1)}$ to  $e_{ni}^{\mu_{i-1}-\mu_i+1}v_{\mu(i)}$,
% for $1\leq i \leq n-1$,   $\pi_{n-1}^\mu$ is the $\sigma\tau(W)$-module homomorphism
%mapping $v_{\mu(n-1)}$ to  $v_{\mu(n)}$,
% and
% $v_{\mu(i-1)}$, $v_{\mu(i)}$ are highest weight vectors of $V(\mu(i-1))$ and $V(\mu(i))$ respectively.

\subsection{ Finite-dimensional irreducible modules over $W$}

By the homomorphism $\tau$ defined by (\ref{x-w-act}), any $\gl_n$-module $V$
can be defined to be a $W$-module denoted by $V^\tau$
such that $x\cdot v=\tau(x)v$ for all $x\in W,v\in V$.

\begin{theorem}\label{finite-w-mod}If $M$ is a finite-dimensional  irreducible $W$-module, then there is a finite-dimensional
irreducible $\gl_n$-module $V$ such that $M$ is isomorphic to an irreducible $W$-quotient module of $V^\tau$.
\end{theorem}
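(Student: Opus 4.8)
The plan is to transfer Theorem \ref{finite-1} across the isomorphism $W\cong \sigma\tau(W)$. Recall that $\tau:W\hookrightarrow U(\gl_n)$ is injective by Lemma \ref{tau}, and $\sigma$ is an automorphism of $U(\gl_n)$, so $\sigma\tau:W\to \sigma\tau(W)$ is an algebra isomorphism. Composing, any $W$-module can be transported to a $\sigma\tau(W)$-module and conversely, and this correspondence preserves finite-dimensionality, irreducibility, and the sub/quotient lattice.

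First I would take a finite-dimensional irreducible $W$-module $M$. Via the isomorphism $\sigma\tau:W\xrightarrow{\sim}\sigma\tau(W)$, regard $M$ as a finite-dimensional irreducible $\sigma\tau(W)$-module $M'$. By Theorem \ref{finite-1}, there is a finite-dimensional irreducible $\gl_n$-module $V(\l)$ (with $\l\in\Lambda_n^+$) and a surjective $\sigma\tau(W)$-module homomorphism $V(\l)\twoheadrightarrow M'$, where $V(\l)$ is viewed as a $\sigma\tau(W)$-module via the inclusion $\sigma\tau(W)\subset U(\gl_n)$.

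Next I would pull this back along $\sigma$. Since $\sigma$ is an automorphism of $U(\gl_n)$, twisting the $\gl_n$-action on $V(\l)$ by $\sigma^{-1}$ yields another $\gl_n$-module $V'$ which is again finite-dimensional and irreducible (an automorphism twist of an irreducible is irreducible); concretely $V'\cong V(\l')$ for the unique $\l'\in\Lambda_n^+$ with $V(\l')\cong V(\l)^{\sigma^{-1}}$. By construction, restricting $V'$ to $\sigma\tau(W)\subset U(\gl_n)$ and then identifying $\sigma\tau(W)$ with $\tau(W)$ via $\sigma$ recovers exactly $V(\l)|_{\sigma\tau(W)}$; equivalently, $V'$ restricted to $\tau(W)$ equals $V(\l)$ restricted to $\sigma\tau(W)$ transported back through $\sigma$. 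Hence the surjection $V(\l)\twoheadrightarrow M'$ of $\sigma\tau(W)$-modules becomes a surjection of $\tau(W)$-modules, i.e. a surjection of $W$-modules $(V')^\tau\twoheadrightarrow M$. Thus $M$ is an irreducible $W$-quotient of $(V')^\tau$ for the finite-dimensional irreducible $\gl_n$-module $V=V'$, which is the claim.

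The main point to be careful about is the bookkeeping of the automorphism twist: one must check that "restrict to $\sigma\tau(W)$ then pull back along $\sigma$" applied to $V(\l)$ is literally the same $W$-module as "restrict the $\sigma^{-1}$-twist of $V(\l)$ to $\tau(W)$". This is a formal diagram chase — for $w\in W$ and $v\in V(\l)$ one has $w\cdot v = (\sigma\tau(w))v$ in the first description and $w\cdot v=(\tau(w))\cdot_{\sigma^{-1}} v=\sigma^{-1}(\tau(w))$ acting on the untwisted space, and applying $\sigma$ to the module identifies these — but it is the only place where an error could creep in, so I would spell it out explicitly rather than leave it implicit. Everything else is an immediate consequence of Theorem \ref{finite-1} together with the fact that isomorphisms and automorphism-twists preserve the relevant module-theoretic properties.
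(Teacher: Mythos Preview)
Your approach is essentially the same as the paper's: transport $M$ through the isomorphism $\sigma\tau:W\xrightarrow{\sim}\sigma\tau(W)$, invoke Theorem~\ref{finite-1}, and transport the resulting surjection back. The paper's proof is terser, simply noting that since $\sigma$ is an automorphism of $U(\gl_n)$, Theorem~\ref{finite-1} yields directly that the $\tau(W)$-module $M$ is a quotient of some $V(\lambda)$ restricted to $\tau(W)$.

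One small point: your twist direction is off. The $W$-action on $V(\lambda)$ coming from the surjection is $w\cdot v=\sigma\tau(w)\cdot_{V(\lambda)}v$, which is $(V(\lambda)^{\sigma})^\tau$, not $(V(\lambda)^{\sigma^{-1}})^\tau$; your own computation in the last paragraph shows the two descriptions give $\sigma\tau(w)$ versus $\sigma^{-1}\tau(w)$, which do not literally agree. This is harmless for the theorem as stated, since you only need \emph{some} finite-dimensional irreducible $V$, and twisting by any automorphism preserves that property. The paper sidesteps the issue entirely by observing (in the paragraph preceding Theorem~\ref{irre-factor2}) that $\sigma$ is an inner automorphism, so $V(\lambda)^\sigma\cong V(\lambda)$ and no twist is needed at all. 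You could adopt that observation to clean up your last paragraph.
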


\begin{proof} We can view $M$ as a $\sigma\tau(W)$-module still denoted by $M$ in the following way:
$$\sigma\tau(u) \circ v= u v,\  \forall\  u\in W, v\in M.$$
By Theorem \ref{finite-1}, there is a finite-dimensional irreducible $\gl_n$-module $V(\l)$ such that  the $\sigma\tau(W)$-module $M$ is isomorphic to an irreducible $\sigma\tau(W)$-quotient module
of $V(\l)$.
Recall that $\sigma=\textup{exp}(-\textup{ad} X)$, and $\sigma$ maps the positive part of $\gl_n$  to the positive part, and $\sigma(h_n)=h_n-X, \sigma(h_i)=h_i+e_{in}$ for $1\leq i\leq n-1$. Thus for $\l\in \Lambda_n^+$, the new $\gl_n$-module $V(\l)^\sigma$ twisted
from $V(\l)$  by $\sigma$ is also isomorphic to  $V(\l)$. Similarly we also have $V(\l)^{\sigma^{-1}} \cong V(\l)$.
Then we can complete the proof.
\end{proof}

Since $V(\l)^\sigma \cong V(\l)$ and $V(\l)^{\sigma^{-1}} \cong V(\l)$,
restricted  to the subalgebra $\tau(W)$, the statements on $V(\l)$ in Theorem \ref{irre-factor} also hold. Therefore we have the following result on the
$W$-module $V(\l)^\tau$ for any finite-dimensional
irreducible $\gl_n$-module $V(\l)$.

\begin{theorem}\label{irre-factor2}  Let $\l\in \Lambda_n^+$.
\begin{enumerate}
\item If $-|\l|-\l_1\not\in \Z$, then the $W$-module $V(\l)^\tau$ is irreducible.
\item  If $-|\l|=\l_i-i$ for some $i\in\{1,\dots,n\}$, then the $W$-module $V(\l)^\tau$ is irreducible.
\item Let $\mu\in \Lambda_n^+$ such that $-|\mu|-\mu_1\in \Z_{\geq 0}$.
 Then the composition length of the $W$-module $V(\mu(i))^\tau$ is $2$ for $i=1,\dots,n-1$, but $V(\mu)^\tau$  and $V(\mu(n))^\tau$ are irreducible, where $\mu(i)=(\mu_0+1,\dots,\mu_{i-1}+1,\mu_{i+1},\dots,\mu_n)\in \Lambda_n^+$, and $\mu_0=-|\mu|$.
\end{enumerate}
\end{theorem}

\begin{remark} It is not easy to  obtain Proposition  \ref{finite-condition} from
Theorem 7.9 in \cite{BK2} and the arguments in the case of the Yangian $Y_n$ due to Tarasov \cite{Ta} and Drinfeld \cite{Dr}. The advantage of our method is more explicit,  avoiding using
representations over shifted Yangians. Moreover, we can determine the irreducibility  of  any finite-dimensional irreducible $\gl_n$-module $V(\l)$  when it was restricted to   $\sigma\tau(W)$ in Theorem \ref{irre-factor}.
\end{remark}

\section{Realizations of  irreducible cuspidal modules over $\sl_{n+1}$}\label{Sec.4}

In this section, as an application, we give  realizations of
all irreducible cuspidal modules over $\sl_{n+1}$ using
$\gl_n$-modules and the induced module technique.

\subsection{Explicit realizations of cuspidal modules}

For  $\mu\in \C^n$ and a finite-dimensional $W$-module $N$, let each $h_{i}$
 act  on it as the scalar $\mu_i$.
  Define
  $$G_\mu(N)=\text{Ind}_{U(\mh_n)W}^{U_S}N=U_{S}\otimes_{U(\mh_n)W} N.$$ It is clear that $G_\mu(N)=\C[e_{01}^{\pm 1},\dots, e_{0n}^{\pm 1}]\otimes N$ by the isomorphism in Theorem \ref{main-iso-th}.
  From  (\ref{x-w-def}),
   the $\sl_{n+1}$-module structure on $G_\mu(N)$ satisfies:
  $$\aligned h_k \cdot (e^{\br}\otimes v)& =(\mu_k-r_k)e^{\br}\otimes v,\\
  e_{0k} \cdot (e^{\br}\otimes v)& =e^{\br+e_k}\otimes v,\\
   e_{ij}\cdot  (e^{\br}\otimes v)& =e^{\br-e_i+e_j}\otimes x_{ij} v+(\mu_i-r_i+1)e^{\br-e_i+e_j}\otimes v,\ i\neq j,\\
  e_{i0}\cdot (e^{\br}\otimes v)
  &= e^{\br-e_i}\otimes\Big( \omega_i v-\sum_{j=1}^n(\mu_j-r_j) x_{ij}v- (|\mu|-|\br|)(\mu_i-r_i+1) v\Big),
  \endaligned$$
where $v\in N, e^{\br}=e_{01}^{r_1}\dots e_{0n}^{r_n}, \br=(r_1,\dots, r_n)\in \Z^n, 1\leq i, j, k\leq n$, and $x_{ii}:=0$.

In particular if $V$ is a $\gl_n$-module, then
$G_\mu(V^\tau)$ is an $\sl_{n+1}$-module such that:
\begin{equation}\label{ind-mod}\aligned
   e_{ij}\cdot  (e^{\br}\otimes v)& =e^{\br-e_i+e_j}\otimes(e_{ij}-e_{ii}+\mu_i-r_i+1)v,\ i\neq j,\\
  e_{i0}\cdot (e^{\br}\otimes v)
  &= e^{\br-e_i}\otimes\Big( \sum_{j=1}^n (e_{ij}e_{jj}-e_{ij}) v-\sum_{j=1}^n(\mu_j-r_j) (e_{ij}-e_{ii})v\\
   & \ \ - (|\mu|-|\br|)(\mu_i-r_i+1) v\Big).
  \endaligned\end{equation}

\begin{lemma}\label{cuspidal-lemm}Suppose that $V(\l)$ is a finite-dimensional
irreducible $\gl_n$-module.
Then for $\mu\in \C^n$, $G_{\mu}(V(\l)^\tau)$ is a cuspidal $\sl_{n+1}$-module  if and only if
\begin{equation}\label{cus-condition}\mu_i-\l_i\not\in \Z,\ |\mu|+\l_i\not\in \Z,\ \text{for any}\
i\in \{1,\dots,n\}.
\end{equation}
\end{lemma}

\begin{proof}
 It is easy to see that $e_{0i}$ acts injectively on $G_\mu(V(\lambda)^\tau)$. Since  $V(\lambda)$ is a weight module, $V(\lambda)=\oplus_{\alpha\in \C^n}V(\lambda)_\alpha$, where $V(\lambda)_\alpha:=\{w \in V(\lambda) \mid e_{kk}w=\alpha_kw, k=1\dots, n\}$.
 For any nonzero $v=\sum_{\alpha \in I }v_{\alpha}\in V(\lambda)$, where $I\subset \C^n$ is a finite subset, $0\neq v_\alpha\in V(\lambda)_\alpha$, we have that
 $$\aligned
& (e_{ij}-e_{ii}+\mu_i-r_i+1)v
&=\sum_{\alpha \in I }e_{ij}v_{\alpha}
+ \sum_{\alpha \in I}(-\alpha_i+\mu_i-r_i+1)v_{\alpha}.
\endaligned$$
  Then by the first formula in (\ref{ind-mod}), for $j\neq i$, we can see that $e_{ij}\cdot (e^{\br}\otimes v)\neq 0$  if  $\mu_i-\l_i\not\in \Z$, since $\l_i-\alpha_i\in \Z$.
  Conversely, if $G_\mu(V(\lambda)^\tau)$ is cuspidal,
  choose $\alpha\in \mathrm{supp}V(\l)$ such that
  $\alpha_i$ is maximal, then from $(e_{ij}-e_{ii}+\mu_i-r_i+1)v_\alpha
=(-\alpha_i+\mu_i-r_i+1)v_{\alpha}\neq 0$, we have $\mu_i-\l_i\not\in \Z$.
  So   $e_{ij}$
acts injectively on $G_\mu(V(\lambda))$ if and only if  $\mu_i-\l_i\not\in \Z$.

 Furthermore,
 $$\aligned &  \sum_{j=1}^n (e_{ij}e_{jj}-e_{ij}) v-\sum_{j=1}^n(\mu_j-r_j) (e_{ij}-e_{ii})v- (|\mu|-|\br|)(\mu_i-r_i+1) v\\
% &= \sum_{\alpha \in I}\sum_{j=1}^n (e_{ij}e_{jj}-e_{ij}) v_\alpha-\sum_{\alpha \in I}\sum_{j=1}^n(\mu_j-r_j) (e_{ij}-e_{ii})v_\alpha- \sum_{\alpha \in I}(|\mu|-|\br|)(\mu_i-r_i+1) v_\alpha\\
 &=\sum_{\alpha \in I}\sum_{j=1}^n e_{ij}(\alpha_{j}-1) v_\alpha-\sum_{\alpha \in I}\sum_{j=1}^n(\mu_j-r_j) (e_{ij}-\alpha_{i})v_\alpha
 - \sum_{\alpha \in I}(|\mu|-|\br|)(\mu_i-r_i+1) v_\alpha\\
 %&=\sum_{\alpha \in I}\sum_{j=1}^n e_{ij}(\alpha_{j}-1-\mu_j+r_j) v_\alpha
% +\sum_{\alpha \in I}\sum_{j=1}^n(\mu_j-r_j) \alpha_{i}v_\alpha- \sum_{\alpha \in I}(|\mu|-|\br|)(\mu_i-r_i+1) v_\alpha\\
  &=\sum_{\alpha \in I}\sum_{j=1,j\neq i}^n e_{ij}(\alpha_{j}-1-\mu_j+r_j) v_\alpha
- \sum_{\alpha \in I}(|\mu|-|\br|+\alpha_i)(-\alpha_{i}+\mu_i-r_i+1) v_\alpha.
 \endaligned$$
 Then by the second formula in (\ref{ind-mod}), considering the coefficient  $(|\mu|-|\br|+\alpha_i)(-\alpha_{i}+\mu_i-r_i+1) \neq 0$, we conclude that  $e_{i0}$ acts injectively on $G_\mu(V(\lambda)^\tau)$ if and only if
 $\mu_i-\l_i\not\in \Z$ and $|\mu|+\l_i\not\in \Z$ for any
$i\in \{1,\dots,n\}$.
\end{proof}

By Theorem \ref{irre-factor2}, the composition length  of  $G_{\mu}(V(\l)^\tau)$ is at most $2$. Now we are ready to give a new classification of irreducible
cuspidal $\sl_{n+1}$-modules.

\begin{theorem}\label{Realization-theorem}[Realization Theorem] If $M$ is an irreducible cuspidal $\sl_{n+1}$-module such that $\mathrm{supp}M=\mu+\Z^n$ for some
$\mu\in \C^n$,
then there is a finite-dimensional irreducible
$\gl_n$-module $V(\l)$ such that $M$ is isomorphic to
an irreducible quotient module of $G_\mu(V(\l)^\tau)$, where $\l\in \Lambda_n^+$ satisfies  the condition (\ref{cus-condition}) and $\tau:W\rightarrow U(\gl_n)$ is  defined by (\ref{x-w-act}).
\end{theorem}

\begin{proof}Since every root vector acts bijectively on
$M$, $M$ can be extended to be a $U_S$-module. Let $N$
be the weight space $M_\mu$. From $[W,\mh_n]=0$,
$WN\subset N$, i.e., $N$ is a $W$-module.
By the isomorphism $U_{S}\cong B \otimes W $,
$M\cong G_\mu(N)$. The irreducibility of
$M$ implies that $N$ is an irreducible $W$-module.
By Theorem \ref{finite-w-mod}, there is a finite-dimensional
irreducible $\gl_n$-module $V(\l)$ such that $N$ is isomorphic to an irreducible $W$-quotient module of $V(\l)^\tau$. Therefore $M$ is isomorphic to
an irreducible $\sl_{n+1}$-quotient module of $G_{\mu}(V(\l)^\tau)$.
By Theorem \ref{irre-factor}, that $N$ is also isomorphic to an irreducible $W$-submodule of $V(\l')^\tau$ for another $\l'\in \Lambda_n^+$. By the similar proof for Lemma \ref{cuspidal-lemm},
$\lambda$ satisfies the condition (\ref{cus-condition}).
\end{proof}

\begin{remark} Recall that  $\mg(0)\cong \gl_n$.  The $\gl_n$-module $V(\lambda)$  can be extended to a module over the parabolic subalgebra $\mathfrak{p}'_n:=\mg(0)\ltimes \mg(-2)$ by defining $\mg(-2)V(\lambda)=0$. The parabolic Verma module $M_{\mathfrak{p}'_n}(\lambda)$ over $\sl_{n+1}$ is
$U(\sl_{n+1})\otimes_{U(\mathfrak{p}'_n)}V(\lambda)$. Denote the unique irreducible   quotient of $M_{\mathfrak{p}'_n}(\lambda)$ by $L_{\mathfrak{p}'_n}(\lambda)$. In \cite{M},
it was shown that  an irreducible cuspidal $\sl_{n+1}$-module is isomorphic to a twisted localization of some $L_{\mathfrak{p}'_n}(\lambda)$, see also Theorem 8.3 in \cite{GG}. Therefore Theorem \ref{Realization-theorem} gives an explicit realization
of some irreducible  twisted localization of some $L_{\mathfrak{p}'_n}(\lambda)$.
\end{remark}

\subsection{Relation between $G_{\mu}(V(\l)^\tau)$ and the Shen-Larsson module}

Finally we discuss  the connection between  $G_{\mu}(V(\l)^\tau)$ and $T(P(\mu),V(\l))$ defined by  Proposition \ref{s-iso}.
For $\mu \in \C^n$, let $P(\mu)=x^\mu\C[x_1^{\pm 1},\dots, x_n^{\pm 1}]$, where
$x^{\mu}=x_1^{\mu_1}\dots x_n^{\mu_n}$.  Then $P(\mu)$ is a $D_n$-module under the natural actions of derivations.
By (\ref{3.1}) , one can check that $T(P(\mu-\l),V(\l))$ is a cuspidal $\sl_{n+1}$-module if and only if $\mu_i-\l_i\not\in \Z$ and $|\mu|+\l_i\not\in \Z$  for any
$i\in \{1,\dots,n\}$.

\begin{proposition}
Let $\l\in \Lambda_n^+$ and  $\mu\in\C^n$ such that $\mu_i-\l_i\not\in \Z$ and $|\mu|+\l_i\not\in \Z$  for any
$i\in \{1,\dots,n\}$. Then
$T(P(\mu-\l),V(\l))\cong G_{\mu}(V(\l)^\tau)$.
\end{proposition}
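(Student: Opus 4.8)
The plan is to construct an explicit $\sl_{n+1}$-module isomorphism between the two modules by matching up a natural set of generators. Both $T(P(\mu-\l),V(\l))$ and $G_\mu(V(\l)^\tau)$ are, as vector spaces, of the form ``Laurent polynomials in $n$ variables tensored with $V(\l)$'': on the one side $P(\mu-\l)=x^{\mu-\l}\C[x_1^{\pm1},\dots,x_n^{\pm1}]\otimes V(\l)$, and on the other side $G_\mu(V(\l)^\tau)=\C[e_{01}^{\pm1},\dots,e_{0n}^{\pm1}]\otimes V(\l)$ by Theorem \ref{main-iso-th}. First I would define a linear bijection $\Phi$ by sending the monomial basis $x^{\mu-\l+\bm}\otimes v \mapsto e_{01}^{-m_1}\cdots e_{0n}^{-m_n}\otimes v$ (up to a convenient shift/sign in the exponents to be fixed during the computation) for $\bm\in\Z^n$ and $v$ ranging over a weight basis of $V(\l)$, and then check that $\Phi$ intertwines the $\sl_{n+1}$-actions.

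The key steps, in order: (1) write down the $\sl_{n+1}$-action on $T(P(\mu-\l),V(\l))$ explicitly from the formulas (\ref{3.1}) in Proposition \ref{s-iso}, specializing $x_i\partial_{x_i}$ to act as $(\mu_i-\l_i+m_i)$ on $x^{\mu-\l+\bm}$; (2) recall the $\sl_{n+1}$-action on $G_\mu(V(\l)^\tau)$ from the explicit formulas (\ref{ind-mod}); (3) compare the action of the three types of generators $h_k$, $e_{ij}$ ($i\neq j$), $e_{0j}$, $e_{i0}$ on corresponding basis vectors under $\Phi$, and verify they agree. For $h_k$ and $e_{0j}$ this is immediate from the definition of $\Phi$ (both act by the obvious weight shifts). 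For $e_{ij}$ with $i\neq j$, both formulas produce a shift $e^{\br}\mapsto e^{\br-e_i+e_j}$ (resp.\ $x^{\bm}\mapsto x^{\bm+e_i-e_j}$) together with the operator $(e_{ij}-e_{ii}+\mu_i-r_i+1)$ acting on $V(\l)$; the only thing to check is that the scalar bookkeeping matches, which is a direct substitution. The real content is in $e_{i0}$: here one must verify that the degree-two differential operator piece $x_i\sum_q x_q\partial_{x_q}\otimes 1 + \sum_q x_q\otimes e_{iq} + x_i\otimes I_n$ from (\ref{3.1}), after specialization and translation through $\Phi$, reproduces exactly the operator
$$\sum_{j=1}^n(e_{ij}e_{jj}-e_{ij})v-\sum_{j=1}^n(\mu_j-r_j)(e_{ij}-e_{ii})v-(|\mu|-|\br|)(\mu_i-r_i+1)v$$
appearing in (\ref{ind-mod}). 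This amounts to the identity $\gamma_i = \sum_j(e_{ij}e_{jj}-e_{ij})$ together with the Shen--Larsson realization already packaged in the earlier lemma computing the $W$-action on $\mathrm{wh}_{\b1}T(A_n^{\b1},V(\l))$ via (\ref{x-w-act2}); indeed that lemma is essentially this statement at the level of the weight-$\b1$ subspace, and one propagates it across all of $P(\mu-\l)$ using that both modules are induced/generated from that subspace.

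The main obstacle I expect is the $e_{i0}$ computation: keeping the triple sum from $x_i\sum_q x_q\partial_{x_q}$ correctly aligned with the ``$-(|\mu|-|\br|)(\mu_i-r_i+1)$'' term and the cross terms $\sum_q x_q\otimes e_{iq}$ aligned with $\sum_j(e_{ij}e_{jj}-e_{ij})-\sum_j(\mu_j-r_j)(e_{ij}-e_{ii})$ requires care with the exponent conventions in $\Phi$, and a sign error there is easy to make. A cleaner route, which I would actually adopt to avoid the brute-force comparison, is to observe that both $T(P(\mu-\l),V(\l))$ and $G_\mu(V(\l)^\tau)$ have the same weight support $\mu+\Z^n$ with all weight spaces isomorphic to $V(\l)$ as $W\cong W(e)$-modules — for $T(P(\mu-\l),V(\l))$ this is the lemma identifying $\mathrm{wh}_{\b1}T(A_n^{\b1},V(\l))\cong V(\l)^\tau$ combined with the twist by $\sigma_{\b1}$ to shift the weight, and for $G_\mu(V(\l)^\tau)$ it is immediate from the construction — and then invoke the equivalence $U_S\cong B\otimes W$ of Theorem \ref{main-iso-th}: any $U_S$-module is determined by its ``$B$-degree-zero'' part as a $W$-module, and $G_\mu(-)$ is precisely the functor implementing $U_S\otimes_{U(\mh_n)W}(-)$, so once both modules are exhibited as $U_S$-modules whose corresponding $W$-modules are isomorphic (both being $V(\l)^\tau$), they must be isomorphic as $\sl_{n+1}$-modules. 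Under this approach the only real verification needed is that the $W$-action on the weight space $M_\mu$ of $T(P(\mu-\l),V(\l))$ is indeed $V(\l)^\tau$, which follows from the already-established formulas (\ref{x-w-act2}) after the $\sigma_{\b1}$-twist.
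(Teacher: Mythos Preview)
Your second (``cleaner'') route is exactly the paper's strategy: set $M=T(P(\mu-\l),V(\l))$, use cuspidality and $U_S\cong B\otimes W$ to get $M\cong G_\mu(M_\mu)$, and reduce everything to proving $M_\mu\cong V(\l)^\tau$ as $W$-modules. The gap is in your justification of that last isomorphism. The formulas (\ref{x-w-act2}) compute the $W$-action on the \emph{Whittaker} space $\mathrm{wh}_{\b1}$ of $T(A_n^{\b1},V(\l))$, and $A_n^{\b1}$ is a completely different $D_n$-module from $P(\mu-\l)$: the first has no nontrivial $\mh_n$-weight spaces at all, and no $\sigma_{\b1}$-twist converts it into the second. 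So that lemma does not transfer to the weight space $M_\mu$ of $T(P(\mu-\l),V(\l))$; a fresh computation is required.

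What the paper actually does for that step is to write $M_\mu=\bigoplus_\alpha x^{\mu-\alpha}\otimes V(\l)_\alpha$ and equip it with an auxiliary $\gl_n$-action
\[
e_{ij}\circ(x^{\mu-\alpha}\otimes v_\alpha)=\frac{\mu_i-\alpha_i+\delta_{ij}}{\mu_j-\alpha_j+1}\,x^{\mu-\alpha+e_j-e_i}\otimes e_{ij}v_\alpha,
\]
the denominators being nonzero exactly because $\mu_j-\l_j\notin\Z$. One checks this makes $M_\mu\cong V(\l)$ as $\gl_n$-modules, and then a direct calculation from (\ref{x-w-def}) and (\ref{3.1}) shows that $x_{ij}$ and $\omega_k$ act on $M_\mu$ as $e_{ij}-e_{ii}$ and $\sum_j(e_{kj}e_{jj}-e_{kj})$ with respect to this $\circ$-action, i.e.\ via $\tau$. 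Your first approach hits the same obstacle in disguise: the map $x^{\mu-\l+\bm}\otimes v_\alpha\mapsto e^{-\bm}\otimes v_\alpha$ does not respect $\mh_n$-weights (the left side has weight $\mu-\l+\bm+\alpha$, the right side $\mu+\bm$), and already $e_{0j}$ acts on the $T$-side with the scalar $-(\mu_j-\l_j+m_j)$ that has no counterpart on the $G_\mu$-side. Repairing both forces $\alpha$-dependent scalar factors into $\Phi$, which are precisely the fractional coefficients above reappearing.
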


\begin{proof} For convenience, denote $M=T(P(\mu-\l),V(\l))$.
Since $M$ is a cuspidal $\sl_{n+1}$-module,
we have that $M\cong G_\mu({M_\mu})$. Next we need to show
the $W$-module $M_\mu$ is isomorphic to $V(\l)^\tau$.
Let $V(\lambda)=\oplus_{\alpha\in \C^n}V(\lambda)_\alpha$
be the weight space decomposition of $V(\l)$.
As a vector space, $M_\mu=\oplus_{\alpha\in \mathrm{Supp}V(\l)} x^{\mu-\alpha}\otimes V(\l)_\alpha$.
 Define
an action of $\gl_n$ on $M_\mu$ such that
$$e_{ij}\circ (x^{\mu-\alpha}\otimes v_\alpha)=
\frac{\mu_i-\alpha_i+\delta_{ij}}{\mu_j-\alpha_j+1}
x^{\mu-\alpha+e_j-e_i}\otimes e_{ij}v_\alpha,$$
where $v_\alpha\in V(\l)_\alpha$, $\alpha\in \mathrm{Supp}V(\l)$.
Let $c_\alpha=\prod_{k=1}^n\Gamma(\mu_k-\alpha_k+1)$,
where $\Gamma$ is the Gamma function satisfying
\begin{equation} \label{Gamma}\Gamma(z+1)=z\Gamma(z),\quad z\in \C.
\end{equation}
Define $\Phi: M_\mu\rightarrow V(\l)$ such that
$\Phi(x^{\mu-\alpha}\otimes v_\alpha)=c_\alpha v_\alpha$, which is a vector space isomorphism. By (\ref{Gamma}),
$ \Phi(e_{ij}\circ( x^{\mu-\alpha}\otimes v_\alpha) )
= e_{ij} \Phi( x^{\mu-\alpha}\otimes v_\alpha)
$. Hence $\Phi$ is a $\gl_n$-module isomorphism. By Proposition \ref{s-iso}, the action of $W$ on $M_{\mu}$
satisfies:
$$\aligned
x_{ij}(x^{\mu-\alpha}\otimes v_\alpha)
%&=(e_{ij}e_{0i}e_{0j}^{-1}-h_i)(x^{\mu-\alpha}\otimes v_\alpha),\\
%&=e_{ij}\frac{\mu_i-\alpha_i+\delta_{ij}}{\mu_j-\alpha_j+1}x^{\mu-\alpha+e_j-e_i}\otimes v_\alpha-\mu_ix^{\mu-\alpha}\otimes v_\alpha\\
%&=(\mu_j-\alpha_j+1-\delta_{ij})\frac{\mu_i-\alpha_i+\delta_{ij}}{\mu_j-\alpha_j+1}
%x^{\mu-\alpha}\otimes v_\alpha\\
%& +
%\frac{\mu_i-\alpha_i+\delta_{ij}}{\mu_j-\alpha_j+1}x^{\mu-\alpha+e_j-e_i}\otimes e_{ij}v_\alpha\\
%& -\mu_ix^{\mu-\alpha}\otimes v_\alpha\\
%&=(\mu_i-\alpha_i)
%x^{\mu-\alpha}\otimes v_\alpha\\
%& +
%\frac{\mu_i-\alpha_i+\delta_{ij}}{\mu_j-\alpha_j+1}x^{\mu-\alpha+e_j-e_i}\otimes e_{ij}v_\alpha\\
%& -\mu_ix^{\mu-\alpha}\otimes v_\alpha\\
&=-\alpha_i
x^{\mu-\alpha}\otimes v_\alpha +
\frac{\mu_i-\alpha_i+\delta_{ij}}{\mu_j-\alpha_j+1}x^{\mu-\alpha+e_j-e_i}\otimes e_{ij}v_\alpha\\
&= (e_{ij}-e_{ii})\circ(x^{\mu-\alpha}\otimes v_\alpha)
\endaligned$$
and
 $$\aligned
 \omega_k (x^{\mu-\alpha}\otimes v_\alpha)&
 =\Big(e_{k0}e_{0k}+\sum_{j=1}^n
 (e _{kj}-\frac{\delta_{jk}}{n+1}I_{n+1})(h_j-1)e_{0k}e_{0j}^{-1}\Big)(x^{\mu-\alpha}\otimes v_\alpha)\\
 %&=-(\mu_k-\alpha_k)e_{k0}(x^{\mu-\alpha-e_k}\otimes v_\alpha) \\
%&+\sum_{j=1}^ne_{kj}(\mu_j-\alpha_j+1-\delta_{kj}+\alpha_j-1)
%\frac{\mu_k-\alpha_k+\delta_{jk}}{\mu_j-\alpha_j+1}
%x^{\mu-\alpha+e_j-e_k}\otimes v_\alpha\\
% &=-(\mu_k-\alpha_k)\sum_{j=1}^n(x^{\mu-\alpha-e_k+e_j}\otimes e_{kj}v_\alpha) \\
% &-(\mu_k-\alpha_k)(|\mu-\alpha|-1)(x^{\mu-\alpha}\otimes v_\alpha) \\
% &-(\mu_k-\alpha_k)|\l|(x^{\mu-\alpha}\otimes v_\alpha) \\
%&+\sum_{j=1}^n(\mu_j-\alpha_j+1-\delta_{kj})
%(\mu_j-\delta_{kj})
%\frac{\mu_k-\alpha_k+\delta_{jk}}{\mu_j-\alpha_j+1}
%x^{\mu-\alpha}\otimes v_\alpha\\
%&+\sum_{j=1}^n(\mu_j-\delta_{kj})
%\frac{\mu_k-\alpha_k+\delta_{jk}}{\mu_j-\alpha_j+1}
%x^{\mu-\alpha+e_j-e_k}\otimes e_{kj}v_\alpha\\
% &=-(\mu_k-\alpha_k)\sum_{j=1}^n(x^{\mu-\alpha-e_k+e_j}\otimes e_{kj}v_\alpha) \\
% &-(\mu_k-\alpha_k)(|\mu-\alpha+\l|-1)(x^{\mu-\alpha}\otimes v_\alpha) \\
%&+\sum_{j=1}^n
%(\mu_k-\alpha_k)(\mu_j-\delta_{jk})
%x^{\mu-\alpha}\otimes v_\alpha\\
%&+\sum_{j=1}^n(\mu_j-\delta_{kj})
%\frac{\mu_k-\alpha_k+\delta_{jk}}{\mu_j-\alpha_j+1}
%x^{\mu-\alpha+e_j-e_k}\otimes e_{kj}v_\alpha\\
%&=-(\mu_k-\alpha_k)\sum_{j=1}^n(x^{\mu-\alpha-e_k+e_j}\otimes e_{kj}v_\alpha) \\
%&+\sum_{j=1}^n(\mu_j-\delta_{kj})
%\frac{\mu_k-\alpha_k+\delta_{jk}}{\mu_j-\alpha_j+1}
%x^{\mu-\alpha+e_j-e_k}\otimes e_{kj}v_\alpha\\
&=\sum_{j=1}^n
\frac{(\mu_k-\alpha_k+\delta_{jk})(\alpha_j-1)}{\mu_j-\alpha_j+1}
x^{\mu-\alpha+e_j-e_k}\otimes e_{kj}v_\alpha\\
&=\sum_{j=1}^n(e_{kj}e_{jj}-e_{kj})\circ(x^{\mu-\alpha}\otimes v_\alpha)
, \endaligned$$
for all $i,j,k\in\{1,\dots,n\}$ with $i\neq j$.
Hence the $W$-module $M_\mu$ is isomorphic to $V(\l)^\tau$.
Therefore $M\cong G_\mu(V(\l)^\tau)$.
\end{proof}

\begin{corollary}\label{Classification-theorem2} If $M$ is an irreducible cuspidal $\sl_{n+1}$-module such that $\mathrm{supp}M=\mu+\Z^n$ for some
$\mu\in \C^n$,
then there is a finite-dimensional irreducible
$\gl_n$-module $V(\l)$ such that $M$ is isomorphic to
an irreducible $\sl_{n+1}$-quotient module of $T(P(\mu-\l),V(\l))$, where $\l\in \C^n$ satisfies that $\mu_i-\l_i\not\in \Z$ and $|\mu|+\l_i\not\in \Z$ for any
$i\in \{1,\dots,n\}$.
\end{corollary}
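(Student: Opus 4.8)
The plan is to deduce this directly from Theorem~\ref{Classification-theorem} together with the proposition immediately preceding this corollary, which identifies $T(P(\mu-\l),V(\l))$ with $G_\mu(V(\l)^\tau)$.

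First I would apply Theorem~\ref{Classification-theorem}. Since $M$ is an irreducible cuspidal $\sl_{n+1}$-module with $\mathrm{supp}\,M=\mu+\Z^n$, that theorem furnishes a finite-dimensional irreducible $\gl_n$-module $V(\l)$ together with a surjection of $\sl_{n+1}$-modules $G_\mu(V(\l)^\tau)\twoheadrightarrow M$, where $\l\in\C^n$ satisfies $\mu_i-\l_i\notin\Z$ and $|\mu|+\l_i\notin\Z$ for all $i\in\{1,\dots,n\}$. Because every finite-dimensional irreducible $\gl_n$-module is of the form $V(\l)$ for some $\l\in\Lambda_n^+$, we may take this $\l$ to lie in $\Lambda_n^+$, so the hypotheses of the preceding proposition ($\l\in\Lambda_n^+$ and the two non-integrality conditions on $\mu$) are all in force.

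Next I would invoke that proposition, which gives an isomorphism $T(P(\mu-\l),V(\l))\cong G_\mu(V(\l)^\tau)$ of $\sl_{n+1}$-modules. Precomposing the surjection $G_\mu(V(\l)^\tau)\twoheadrightarrow M$ obtained in the first step with this isomorphism exhibits $M$ as an irreducible $\sl_{n+1}$-quotient of $T(P(\mu-\l),V(\l))$, with $\l$ subject precisely to the stated constraints. This completes the argument.

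I do not expect a genuine obstacle here, since the corollary is a formal consequence of the two earlier results; the only point needing a line of justification is that the $V(\l)$ produced by Theorem~\ref{Classification-theorem} may be assumed to have highest weight $\l\in\Lambda_n^+$, so that the preceding proposition — stated for $\l\in\Lambda_n^+$ — applies verbatim.
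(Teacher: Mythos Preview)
Your proposal is correct and is exactly the intended argument: the paper states the corollary without proof, as it follows immediately from Theorem~\ref{Classification-theorem} combined with the preceding proposition identifying $T(P(\mu-\l),V(\l))\cong G_\mu(V(\l)^\tau)$. Your remark that $\l\in\Lambda_n^+$ (so the proposition applies) is the only point worth making explicit, and it is automatic since $V(\l)$ is finite-dimensional.
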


%To the best of our knowledge, the above conclusion
%in Corollary \ref{Classification-theorem2} has not been fully and clearly presented in the previous literature, although it is generally believed to be correct.
%We believe that the method proposed in this article is applicable to the Lie super-algebra $\mathfrak{q}_n$, which is also the future work we will  do.
\vspace{2mm}

%\noindent
%{\bf Acknowledgments. }This research is supported  by National Natural Science Foundation of China (Grants
%12371026, 12271383).
%The authors would like to thank
%the referee for nice suggestions concerning the presentation of the
%paper.

%
%\noindent {\bf Data Availability}  Data sharing not applicable to this article as no datasets were generated or analysed during
%the current study.
%
%\noindent {\bf Conflict of interests}  The authors have no competing interests to declare that are relevant to the content of
%this article.

\vspace{4mm}

 \noindent G.L.: School of Mathematics and Statistics,
and  Institute of Contemporary Mathematics,
Henan University, Kaifeng 475004, China. Email: liugenqiang@henu.edu.cn

\vspace{0.2cm}

 \noindent M.L.: School of Mathematics and Statistics,
Henan University, Kaifeng 475004, China. Email: 13849167669@163.com


\begin{thebibliography}{99999}
%\bibitem[BG80]{BG80}    J. Bernstein, S. Gelfand, Tensor products of finite and infinite-dimensional representations of
%semisimple Lie algebras, Compositio Math. 41 (1980), 245-285.
\bibitem[ACD]{ACD} D. Adamovi\'c, T. Creutzig, N. Genra, Relaxed and logarithmic modules of $\widehat{\sl}_3$. Math. Ann. 389(1)  (2024), 281-324.

\bibitem[BG]{BG}   J. Brundan and S. Goodwin, Good grading polytopes, Proc. London Math. Soc. 94
(2007), 155-180.


\bibitem[BK]{BK}   J. Brundan, A. Kleshchev, Shifted Yangians and finite $W$-algebras, Adv. Math. 200
(2006), 136-195.

\bibitem[BK2]{BK2}  J. Brundan, A. Kleshchev, Representations of shifted Yangians and finite $W$-algebras, Mem.
Amer. Math. Soc. 196  (2008).

\bibitem[BKLM]{BKLM} D. Britten, O. Khomenko, F. Lemire, V. Mazorchuk, Complete reducibility of torsion free $C_n$-modules of finite degree, J. Algebra 276  (2004), 129-142.

 \bibitem[BT]{BT} J. de Boer, T. Tjin, Quantization and representation theory of finite W-algebras, Comm. Math. Phys. 158  (1993), 485-516.

%\bibitem[BL]{BL} D. J. Britten, F. W. Lemire, A classification of simple Lie modules having a $1$-dimensional weight space, Trans. Amer. Math. Soc., 299 (1987),  683-697.


\bibitem[CG]{CG} C. Conley, D. Grantcharov,  Quantization and injective submodules of differential operator modules, Adv. Math. 316  (2017), 216-254.

% \bibitem[D]{D} J.F. van Diejen, A Pieri formula for the characters of complex simple Lie algebras, Transform. Groups 29 (2024), no. 1, 47-75.



\bibitem[DV]{DV} K. De Vos, P. Van Driel, The Kazhdan-Lusztig conjecture for finite $W$-algebras, Lett. Math. Phys. 35 (4)  (1995), 333-344.

\bibitem[Dr]{Dr} V. Drinfeld, A new realization of Yangians and quantized affine algebras, Soviet Math.
Dokl. 36  (1988), 212-216.

 \bibitem[EK]{EK} P. Elashvili, V. Kac, Classification of good gradings of simple Lie algebras, Lie groups and invariant theory, (E. B. Vinberg ed.), Amer. Math. Soc. Transl. 213, AMS, (2005), 85-104.

     \bibitem[F]{F}  S. Fernando, Lie algebra modules with finite dimensional weight spaces, Trans. Amer. Math. Soc. 322  (1990), no. 2, 757-781.

\bibitem[FKR]{FKR} Z. Fehily, K. Kawasetsu, D. Ridout, Classifying relaxed highest-weight modules for admissible-level Bershadsky-Polyakov algebras, Commun. Math. Phys. 385(2) (2021), 859-904.

   \bibitem[Fu]{Fu}  V. Futorny, Weight representations of semisimple finite dimensional
Lie algebras, Ph.D. Thesis, Kyiv University, Kyiv, (1987).

\bibitem[GG]{GG} M. Gorelik, D. Grantcharov, Bounded highest weight modules of $\mathfrak{q}(n)$, Int. Math. Res. Not. 2014, (22) 6111-6154

 \bibitem[GK]{GK}   I.M. Gelfand, A.A. Kirillov, Sur les corps li\'es aux alg\`ebres enveloppantes
des alg\`ebres de Lie, Publ. Math. IHES 31 (1966), 5-19.


\bibitem[GN]{GN}   D. Grantcharov, K. Nguyen, Exponentiation and Fourier transform of tensor modules of $\sl(n+1)$, J. Pure Appl. Algebra 226 (2022), no. 7, Paper No. 106972, 17 pp.

\bibitem[GS1]{GS1}     D. Grantcharov, V. Serganova, Category of $\mathfrak{sp}(2n)$-modules with bounded weight multiplicities, Mosc. Math. J. 6 (2006), 119-134.


\bibitem[GS2]{GS2}    D. Grantcharov, V. Serganova, Cuspidal representations of $\sl(n+1)$, Adv. Math. 224 (2010), 1517-1547.

\bibitem[GS3]{GS3}    D. Grantcharov, V. Serganova, On weight modules of algebras of twisted differential operators on the projective space, Trans. Groups 21 (2016), 87-114.

\bibitem[GW]{GW} R. Goodman, N. Wallach, Symmetry, Representations, and Invariants, GTM 255, Springer, New York, (2009).

\bibitem[H]{H} J. E. Humphreys, Introduction to Lie Algebras and Representation Theory, Springer-Verlag, Berlin,
Heidelberg, New York, (1972).

\bibitem[KR]{KR}  K. Kawasetsu, D. Ridout, Relaxed highest-weight modules I: rank 1 cases, Comm. Math. Phys. 368 (2019), no. 2, 627-663.

\bibitem[KR2]{KR2} K. Kawasetsu, D. Ridout, Relaxed highest-weight modules II: classifications for affine vertex algebras,
Commun. Contemp. Math. 24 (5) (2022) 2150037.

\bibitem[K]{K} B. Kostant, On Whittaker vectors and representation theory, Invent. Math. 48 (1978), 101-184.

\bibitem[LL]{LL} G. Liu, Y. Li,   Whittaker  categories and  the minimal nilpotent finite $W$-algebras for  $\mathfrak{sl}_{n+1}$,  Math. Z. 306, 65 (2024).

\bibitem[LLZ]{LLZ} G. Liu, R. Lu, K. Zhao, Irreducible Witt modules from Weyl modules and $\gl_n$-modules, J.
Algebra 511 (2018), 164-181.

\bibitem[L]{L} I. Losev, Finite-dimensional representations of $W$-algebras, Duke Math. J. 159 (2011), no. 1, 99-143.

\bibitem[L2]{L2} I. Losev, Finite $W$-algebras. In: Proceedings of the International Congress of Mathematicians, vol. \uppercase\expandafter{\romannumeral 3},
(2010), pp. 1281-1307. Hindustan Book Agency, New Delhi.

\bibitem[LO]{LO}    I. Losev, V. Ostrik, Classification of finite-dimensional irreducible modules over $W$-algebras, Compos. Math. 150 (2014), no. 6, 1024-1076.

%\bibitem[Ly]{Ly}T. Lynch, Generalized Whittaker vectors and representation theory, Ph.D. thesis, Massachusetts Inst. Tech. Cambridge, MA, (1979).

\bibitem[M]{M} O. Mathieu, Classification of irreducible weight modules, Ann. Inst. Fourier. {\bf 50}  (2000),  537-592.


 \bibitem[MS1]{MS1} V. Mazorchuk, C. Stroppel, Categorification of (induced) cell modules and the rough structure of generalised Verma modules, Adv. Math,
219  (2008), no. 4, 1363-1426.

 \bibitem[MS2]{MS2}    V. Mazorchuk, C. Stroppel, Cuspidal $\sl_n$-modules and deformations of certain Brauer tree algebras, Adv. Math. 228 (2011), 1008-1042.

%\bibitem[N1]{N1}     J. Nilsson, Simple $\sl_{n+1}$-module structures on $U(\mh)$, J. Algebra 424 (2015), 294-329.

%
%\bibitem[P1]{P1}A. Premet, Modular Lie algebras and the Gelfand-Kirillov conjecture, Invent. Math. 181 (2010), no. 2, 395-420.

\bibitem[Pr1]{Pr1}  A. Premet, Special transverse slices and their enveloping algebras, with an appendix by Serge Skryabin, Adv. Math. 170, no. 1 (2002), 1-55.


\bibitem[Pr2]{Pr2} A. Premet, Enveloping algebras of Slodowy slices and the Joseph ideal, J. Eur. Math. Soc.
9 (2007), 487-543.

\bibitem[PS]{PS}  I. Penkov, V. Serganova, Weight representations of the polynomial Cartan type Lie algebras $W_n$ and $\overline{S}_n$, Math. Res. Lett. 6 (1999), no. 3-4, 397-416.

\bibitem[PT]{PT} A. Premet, L. Topley, Derived subalgebras of centralisers and finite $W$-algebras, Compos. Math. 150 (2014), 1485-1548.


 \bibitem[Pe]{Pe} A. Petukhov, Finite dimensional representations of minimal nilpotent $W$-algebras and Zigzag algebras, Represent. Theory 22 (2018),223-245.

\bibitem[Sh]{Sh} G. Shen, Graded modules of graded Lie algebras of
Cartan type, I. Mixed products of modules,  Sci. Sinica Ser. A
29  (1986),  no. 6, 570-581.


\bibitem[Ta]{Ta} V. Tarasov, Irreducible monodromy matrices for the R-matrix of the XXZ model, and
lattice local quantum Hamiltonians, Theoret. Math. Phys. 63 (1985), 440-454.

\bibitem[T]{T} L. Topley, One dimensional representations of finite $W$-algebras, Dirac reduction and the orbit method, Invent. math. 234 (2023), 1039-1107.

\bibitem[Tj]{Tj}    T. Tjin, Finite $W$-algebras, Phys. Lett. B 292  (1992), no. 1-2, 60-66.


 \bibitem[W]{W}    W. Wang, Nilpotent orbits and finite W-algebras, in: Geometric representation theory and extended affine Lie algebras, Amer. Math. Soc. Providence, (2011), 71-105.

\end{thebibliography}
\end{document}